\documentclass[12pt,dvipsnames]{amsart}

\usepackage{amssymb,mathdots,mathtools,nccmath}

\usepackage[T1]{fontenc}
\usepackage[utf8]{inputenc}
\usepackage[aligntableaux = center]{ytableau}
\usepackage{graphicx}
\usepackage{fullpage}
\usepackage{enumitem}
\usepackage{xcolor}
\usepackage{booktabs}
\usepackage{makecell}
\usepackage{nicematrix}
\usepackage{comment}
\usepackage[normalem]{ulem} 

\usepackage{tikz}
\usetikzlibrary{arrows,decorations.pathreplacing}

\usepackage[sc]{mathpazo}

\usepackage{subcaption}

\newtheorem{thm}{Theorem}[section]
\newtheorem{prop}[thm]{Proposition}
\newtheorem{cor}[thm]{Corollary}
\newtheorem{lem}[thm]{Lemma}

\newtheorem{dfn}[thm]{Definition}
\newtheorem{exm}[thm]{Example}
\newtheorem{rmk}[thm]{Remark}

\DeclareMathOperator{\diag}{diag}
\DeclareMathOperator{\sgn}{sgn}
\newcommand{\sym}[1]{\mathbf{Sym}_{#1}}
\DeclareMathOperator{\asym}{\mathbf{ASym}}

\newcommand{\Symm}[2]{\mathbf{Sym}_{#1_1,...,#1_{#2}}}

\DeclareMathOperator*{\pf}{Pf}

\renewcommand{\le}{\leqslant}
\renewcommand{\ge}{\geqslant}

\newcommand{\F}{\mathsf{F}}

\newcommand{\Mgamma}{M^{\gamma}}
\newcommand{\Mone}{M^{1}}

\newcommand{\Mgammabracket}{M^{\gamma}}
\newcommand{\Mgammarootqbracket}{M^{q^{1/2}\gamma}}

\makeatletter
\providecommand*{\cupdot}{%
  \mathbin{%
    \mathpalette\@cupdot{}%
  }%
}
\newcommand*{\@cupdot}[2]{%
  \ooalign{%
    $\m@th#1\cup$\cr
    \hidewidth$\m@th#1\cdot$\hidewidth
  }%
}
\makeatother

\newcommand{\Def}{\overset{\operatorname{def}}{=}}

\usepackage[colorlinks=true]{hyperref}
\title{Two Littlewood identities for fully inhomogeneous spin Hall-Littlewood symmetric rational functions}
\author[Ilse]{Ilse Fischer}
\thanks{\href{mailto:ilse.fischer@univie.ac.at}{ilse.fischer@univie.ac.at}. The author acknowledges the financial support from the Austrian Science Fund (FWF) grant 10.55776/F100.}

\author[Moritz]{Moritz Gangl}\thanks{\href{mailto:moritz.gangl@univie.ac.at}{moritz.gangl@univie.ac.at}. The author acknowledges the financial support from the Austrian Science Fund (FWF) grant 10.55776/P34931.}
\begin{document} 
\begin{abstract}
    Fully inhomogeneous spin Hall-Littlewood symmetric rational functions $F_\lambda$ arise as partition functions of certain path configurations in the $\mathfrak{sl}_2$ higher spin six vertex models. They are multiparameter generalizations of the classical Hall-Littlewood symmetric polynomials. We establish two new generalizations of the classical Littlewood identity, where we express a weighted sum of  $F_\lambda$'s over all partitions $\lambda$ as a product of the Littlewood kernel and another simple product in one case, and a product of the Littlewood kernel and a Pfaffian in the other case. As a corollary we obtain a novel Littlewood identity for Hall-Littlewood symmetric polynomials.
    
 We also elaborate on the newly established connection between the fully inhomogeneous spin Hall-Littlewood symmetric rational functions $F_\lambda$ and the modified Robbins polynomials, the latter being multivariate generating functions for alternating sign matrices. This connection allowed us to discover the two generalizations of the Littlewood identity and we provide a bijection between the underlying combinatorial models in the case where $\lambda$ is strictly decreasing.
\end{abstract}
\maketitle
\section{Introduction} 
The classical \emph{Littlewood identity} \cite{LittlewoodIdentityOriginal,schur1,schur2} asserts that the sum of \emph{Schur polynomials}  $s_\lambda(x_1,\dots,x_n)$ over all partitions $\lambda$ evaluates to a simple product. 
Concretely, for $n \ge 1$, we have   
\begin{equation}
\label{littlewood} 
    \sum_{\lambda}s_\lambda(x_1,\dots,x_n)=\prod_{i=1}^n\frac{1}{1-x_i}\prod_{1\le i<j\le n}\frac{1}{1-x_ix_j}.
\end{equation}
This identity was already known to Schur~\cite{schur1, schur2}, but combined with the following two variations, the triple was first written down by Littlewood~\cite[p.238]{LittlewoodIdentityOriginal}.
\begin{align}
    \sum_{\text{all parts of $\lambda$ are even}} s_{\lambda}(x_1,\ldots,x_n) &= \prod_{i=1}^n\frac{1}{1-x_i^2}\prod_{1 \le i < j \le n} \frac{1}{1-x_i x_j}\label{eq: second LW id}\\
    \sum_{\text{all parts of $\lambda'$ are even}} s_{\lambda}(x_1,\ldots,x_n) &= \prod_{1 \le i < j \le n} \frac{1}{1-x_i x_j}\label{eq: Littlewood that Gavrilova generalised}
\end{align}
These Littlewood identities lie, besides the \emph{Cauchy identity}~\cite[(4.3)~p.63]{Macdonald}, at the core of algebraic combinatorics and the theory of symmetric functions. The identities have led to several generalizations, variations, and applications throughout combinatorics, representation theory, number theory and integrable probability. For an extensive list in this respect, see, for instance, the introduction of \cite{LWIdentityHistoryFullReferences}.
Notably, all three identities have very beautiful bijective proofs using 
the \emph{Robinson-Schensted-Knuth correspondence.} 

In this paper, we present two generalizations of the Littlewood identity in \eqref{littlewood} for the $\Xi=1$ case of the \emph{fully inhomogeneous spin Hall-Littlewood symmetric rational functions}~$\F_\lambda(u_1,\dots,u_n|s_0,s_1,\ldots)$. These functions and their homogeneous version have appeared in \cite{Borodin, BorodinPetrov, Gavrilova, Petrov} as partition functions of certain square lattice vertex models, where their Yang-Baxter integrability is traced back to the quantum group $U_q(\widehat{\mathfrak{sl}_2})$ and they are also studied from a symmetric function point of view. In \cite{Eigenfunctions2, Eigenfunctions3, Eigenfunctions1} they also appear as eigenfunctions of certain stochastic particle systems. 

For a non-negative integer $n$ and $\lambda=(\lambda_1,\ldots,\lambda_n)$ a partition where we also allow zero parts, the following formula for the $\F_\lambda$'s was shown in \cite{BorodinPetrov}
\begin{equation}\label{eq:def of F lambda}
    \F_\lambda(u_1,\dots,u_n|s_0,s_1,\ldots) = \Symm{u}{n}\left[\prod_{1\le i<j\le n}\frac{u_i-qu_j}{u_i-u_j}\prod_{i=1}^n\left(\frac{1-q}{1-s_{\lambda_i}u_i}\prod_{j=0}^{\lambda_i-1}\frac{u_i-s_j}{1-s_ju_i}\right)\right],
\end{equation}
where 
$$
\Symm{u}{n} G(u_1,\ldots,u_n) \Def \sum_{\sigma \in {\mathfrak S}_n} G(u_{\sigma(1)},\ldots,u_{\sigma(n)}),
$$
using a version of the algebraic Bethe Ansatz.
Note that this is the case of $\xi_j=1$ for all $j\ge 0$ of the polynomials appearing in \cite{BorodinPetrov}, which is a sensible specialization in the case of Littlewood identities and thus will be considered throughout this paper.  

The $\F_\lambda$'s generalize various important symmetric functions. For example, \emph{Schur polynomials} are obtained by setting $q=0$ and $s_j=0$ for all $j$ in $\F_\lambda$. However, they are also generalizations of the \emph{Hall-Littlewood polynomials}, as their name suggests: the Hall-Littlewood polynomials are obtained up to a simple factor when setting only $s_j=0$ for all $j$ in 
$\F_\lambda$, see \eqref{HLspecial}. 

In \cite{Petrov}, Petrov has established a Cauchy identity for the $\F_\lambda$'s that generalizes the classical Cauchy identity.  In this case, the right-hand side does not factorize in general, however, it can be expressed using a determinant. Earlier work in this direction on special cases can be found in \cite{LWCombAspects3,LWCombAspects4,kirillov,warnaar, LWCombAspects13}. In \cite{Gavrilova}, Gavrilova has provided a Littlewood identity connected to Petrov's result for the variation of the classical Littlewood identity given in \eqref{eq: Littlewood that Gavrilova generalised}.
The first main result of this paper is a Littlewood identity, of the type in \eqref{littlewood}, accompanying Petrov's Cauchy identity from \cite{Petrov} mentioned above; see Theorem~\ref{main1}. The other result is another Littlewood identity of the type in \eqref{littlewood} for the fully inhomogeneous spin Hall-Littlewood symmetric rational functions, where this time the right-hand side does factorize, see Theorem~\ref{main2}. 

After a brief remark on notational conventions, we will state the two main results of this paper. For a partition $\lambda$ and a non-negative integer $r$, we denote throughout the paper by $m_r(\lambda)$ the number of parts 
equal to $r$ in the partition $\lambda$. The \emph{$q$-Pochhammer symbol} will also be used, which is defined as
$$
(a;q)_n \overset{\textrm{def}}{=} \prod_{i=0}^{n-1} (1 - a q^{i}). 
$$
Furthermore, we set $\chi_{\mathrm{even}}(n)=1$ if $n$ is even and 
$\chi_{\mathrm{even}}(n)=0$ otherwise. Also, for every triangular array $A=(a_{i,j})_{1 \le i < j \le 2n}$, the \emph{Pfaffian} of the array $A$ is defined as
	\begin{equation}\label{eq: Def of Pfaffian}
		\pf(A)\Def\sum_{\{(i_1,j_1),\dots,(i_n,j_n)\}} \sgn(i_1 j_1 \dots i_n j_n) a_{i_1,j_1} \cdots a_{i_n,j_n},
	\end{equation}
where we sum over all perfect matchings $\{(i_1,j_1),\dots,(i_n,j_n)\}$ of $\{1,\dots,2n\}$ such that $i_1 < \dots < i_n$ and $i_k < j_k$ for all $1 \le k \le n$. 
Our first main result can now be stated as follows.

\begin{thm} 
\label{main1} 
Let $n,p$ be non-negative integers, then
\begin{equation} 
\label{main1:id}
\sum_{\lambda_1 \ge \lambda_2 \ge \ldots \ge \lambda_n \ge 0} 
\prod_{r \ge 0} \frac{(-s_r;q)_{m_r(\lambda)}}{(q;q)_{m_r(\lambda)}} \F_{\lambda}(u_1,\ldots,u_n|s_0,s_1,\ldots)   = 
\prod_{i=1}^{n} \frac{1}{1- u_i}
\prod_{1 \le i < j \le n} \frac{1-q  u_i u_j}{1-u_i u_j} 
\end{equation}
where we set $s_j=s$ for all $j \ge p$ and  $u_i = \frac{s+x_i}{1 + s x_i}$ for all $i$ and a variable $s$, and consider
both sides as power series in $x_1,x_2,\ldots,x_n$.
\end{thm}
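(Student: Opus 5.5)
Our plan is to prove \eqref{main1:id} by induction on $p$, the number of inhomogeneous parameters, after settling the homogeneous case $p=0$ directly.

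\textbf{The case $p=0$.} Here all $s_j=s$ and we expand the symmetrizer in \eqref{eq:def of F lambda}. Since $\lambda$ is weakly decreasing, we can group the sum over $\lambda$ by the multiplicities $m_r(\lambda)$. We then use the standard fact that
\[
\Symm{u}{m}\Bigl[\prod_{i<j}\frac{u_i-qu_j}{u_i-u_j}\Bigr]=\frac{(q;q)_m}{(1-q)^m}.
\]
This lets us write the left-hand side as a single symmetrization over $\mathfrak S_n$ of a sum over \emph{strictly} ordered data, with the weights $(-s;q)_{m}/(q;q)_m$ absorbing the multiplicity corrections.

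In the variables $x_i$ we have $\frac{u_i-s}{1-su_i}=x_i$ and $\frac{1-q}{1-su_i}=\frac{(1-q)(1+sx_i)}{1-s^2}$. Each summand is therefore a monomial in the $x$'s times simple factors. Summing the resulting geometric series over $\lambda$ gives
\[
\Symm{u}{n}\Bigl[\prod_{i<j}\frac{u_i-qu_j}{u_i-u_j}\prod_{i}\phi(x_i)\prod_{i<j}\psi(x_i,x_j)\Bigr]
\]
for explicit rational $\phi,\psi$. We then evaluate this symmetrization by a partial-fraction, or Cauchy/Izergin-type, argument: we show that both sides, viewed as rational functions in $u_1$, have the same poles (at $u_1=1$ and $u_1u_j=1$) with the same residues, and agree at $u_1=\infty$ or at a convenient point. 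Induction on $n$ finishes this case. A cleaner alternative is to take the known Hall–Littlewood case ($s=0$, Macdonald's $\sum_\lambda b_\lambda$-type Littlewood identity) and deform it.

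\textbf{Induction on $p$.} To pass from $p$ to $p+1$ we need to change $s_0$ while keeping $s_j$ for $j\ge1$. We use the structure of \eqref{eq:def of F lambda}: the dependence on $s_0$ enters only through the factor $\frac{u_i-s_0}{1-s_0u_i}$ for each $i$ with $\lambda_i\ge1$, and through $\frac{1}{1-s_0u_i}$ when $\lambda_i=0$. Splitting off the $m_0(\lambda)$ zero parts, we write the left-hand side as a sum over $k=m_0(\lambda)$. Each term is the symmetrization of a product of two pieces: the contribution of the zero parts, carrying the weight $(-s_0;q)_k/(q;q)_k$, and the contribution of the remaining $n-k$ parts. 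The latter, after the shift $\lambda_i\mapsto\lambda_i-1$, is the same sum for the shifted parameter sequence $(s_1,s_2,\dots)$ with an extra factor $\prod\frac{u_i-s_0}{1-s_0u_i}$. The shifted sequence has only $p-1$ inhomogeneous entries, so the induction hypothesis evaluates the inner sum as a product. We are left to show that
\[
\sum_{k}\Symm{u}{n}\Bigl[\cdots\Bigr]
\]
is independent of $s_0$ and equals the right-hand side. This is again a finite symmetrization identity, which we prove by the same residue/pole comparison in $u_1$.

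We expect the main obstacle to be this final symmetrization identity, i.e.\ showing that the $s_0$-dependence cancels and that the weights $(-s_0;q)_k/(q;q)_k$ are exactly what makes it collapse. We would first try to prove it via a $q$-binomial summation after symmetrizing over the subset of indices carrying zero parts. If that fails, we would fall back on checking the residues at $u_1=s_0^{-1}$, which must vanish, and at $u_1u_j=1$.
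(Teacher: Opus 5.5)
Your inductive step is sound, and it is organized differently from the paper. Splitting off the zero parts gives
\[
H_1(u_1,\ldots,u_n|s_0,s_1,\ldots)=\sum_{T\subseteq[n]}c_T(s_0)\,H_1(u_T|s_1,s_2,\ldots),\qquad c_T(s_0)=(-s_0;q)_{n-|T|}\prod_{i=1}^n\frac{1}{1-s_0u_i}\prod_{i\in T}(u_i-s_0)\prod_{i\in T,\,j\notin T}\frac{u_i-qu_j}{u_i-u_j}.
\]
This is the $l=0$ slice of Corollary~\ref{rec1}, with $T=[n]$ now allowed. Inducting on $p$ replaces the paper's geometric series over $l\ge p$ and the telescoping.

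The gap is that everything then rests on the finite identity $R_{[n]}=\sum_{T\subseteq[n]}c_T(s_0)R_T$ for arbitrary $s_0$, where $R_T=\prod_{i\in T}\frac{1}{1-u_i}\prod_{i<j,\ i,j\in T}\frac{1-qu_iu_j}{1-u_iu_j}$. This is exactly \eqref{a} at a single level, i.e.\ Lemma~\ref{lem: Key lemma for main1} after clearing denominators, and it is the whole content of the theorem. You leave it unproved.

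Your fallback also has the wrong closing condition. Demanding that the residue at $u_1=s_0^{-1}$ vanish produces an identity in $u_2,\ldots,u_n$ with factors like $\frac{u_j-qs_0^{-1}}{u_j-s_0^{-1}}$. That is not a smaller instance of the same identity.

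Here is what does work in your residue language:
\begin{itemize}
\item The difference of the two sides is a rational function of $u_1$ that vanishes at $u_1=\infty$, and by symmetry it has no poles at $u_1=u_j$.
\item Its residue at $u_1=1$ vanishes by induction on $n$ with the same $s_0$, since only $T\ni 1$ contribute.
\item Its residue at $u_1=u_j^{-1}$ likewise vanishes by induction with the same $s_0$. Only $T\ni 1,j$ contribute, and there $\frac{(u_1-s_0)(u_j-s_0)}{(1-s_0u_1)(1-s_0u_j)}=1$.
\item Hence the difference equals $c/(1-s_0u_1)$ for some $c$.
\end{itemize}
To get $c=0$, evaluate at $u_1=s_0$. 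All $T\ni 1$ drop out, $\prod_{i\in T}(u_i-s_0)\frac{u_i-qs_0}{u_i-s_0}=\prod_{i\in T}(u_i-qs_0)$, and $(-s_0;q)_{n-|T|}=(1+s_0)(-qs_0;q)_{n-1-|T|}$. This is the $(n-1)$-variable identity with $s_0$ replaced by $qs_0$. So the identity must be proved for all $s_0$ at once, and this $q$-shift is the idea missing from your plan. It is the evaluation $u_n=s$ in the paper's proof of Lemma~\ref{lem: Key lemma for main1}.

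Your base case $p=0$ is also not established as described. Summing over weakly decreasing $\lambda$ inside a single symmetrization produces, for each pattern of equal parts, nested denominators $\prod_b\bigl(1-\prod_{i\in B_1\cup\cdots\cup B_b}x_i\bigr)^{-1}$ with weights depending on the block structure. It does not produce a product $\prod_i\phi(x_i)\prod_{i<j}\psi(x_i,x_j)$. ``Deforming'' the Hall--Littlewood case is not an argument either, because $s$ also enters through $u_i=\frac{s+x_i}{1+sx_i}$.

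You need no separate computation, though. For $s_0=s$, the coefficient of the $T=[n]$ term in your recursion is $\prod_i\frac{u_i-s}{1-su_i}=\prod_i x_i$. So $\bigl(1-\prod_ix_i\bigr)H_1(u|s,s,\ldots)=\sum_{T\subsetneq[n]}c_T(s)H_1(u_T|s,s,\ldots)$. Induction on $n$ together with the same finite identity at $s_0=s$ then gives $p=0$, since $1-\prod_i x_i$ is invertible as a power series. This is exactly what the paper's geometric series over $l\ge p$ accomplishes.

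With these two repairs, your argument is a complete proof. It inducts on $p$ rather than on $n$, but rests on the same key identity as the paper.
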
 
This is a direct generalization of \cite[(1.4)]{bounded}, where the latter is obtained by setting $s_j=-q^{-1}$ and $u_i=\frac{-q^{-1/2}+x_i}{1-q^{-1/2}x_i}$
in $\F_{\lambda}(u_1,\ldots,u_n|s_0,s_1,\ldots)$ and 
$w=-(q+1+q^{-1})$ in the modified Robbins polynomials. The second theorem is a direct generalization of  \cite[Theorem~1.1]{FHRobbins}, which is obtained by setting $\gamma=1$ and $s_j=-q^{-1/2}$.

\begin{thm}
\label{main2}
Let $n,p$ be non-negative integers, then
\begin{multline} 
\sum_{\lambda_1 \ge \lambda_2 \ge \ldots \ge \lambda_n \ge 0} 
\frac{(-\gamma q^{1/ 2};q^{1 / 2})_{m_0(\lambda)}}{(q;q)_{m_0(\lambda)}} (-\gamma^{-1}s_0;q^{1 / 2})_{m_0(\lambda)} 
\\ \times
\prod_{r \ge 1} \frac{(-s_r;q^{1 /2})_{m_r(\lambda)}}{(q^{1 /2};q^{1 /2})_{m_r(\lambda)}}  \F_{\lambda}(u_1,\ldots,u_n|s_0,s_1,\ldots) 
=  \prod_{i=1}^n \frac{1+q^{1/2}}{1-u_i}
\prod_{1 \le i < j \le n} \frac{1-q  u_i u_j}{u_i-u_j} \\
\times 
\pf_{\chi_{\mathrm{even}}(n) \le i < j \le n} \left(
		\begin{cases}
			1 + (\gamma-1) \frac{(q^{1/2} - \gamma^{-1} s_0)(1-u_j)}{(1+q^{1/2}) (1- s_0 u_j)}, & i=0,\\
			\frac{(u_i-u_j)((1+q)(1- q^{1 /2} u_i u_j) + (u_i + u_j)(q^{1 /2}- q)  +
			(\gamma-1) f(u_i,u_j))}{(1+q^{1/2})(1- u_i u_j)(1- q u_i u_j)},& i\ge 1,
		\end{cases} \right) 
\end{multline}
where 
\begin{multline*} 
f(u,v) = 
\frac{(q^{1/2} - \gamma^{-1} s_0) (1-u v)}{(1+q^{1/ 2})(1-s_0 u) (1-s_0 v)}
\left( (1+ q^{1 /2})(1+ \gamma^{-1} s_0) (1 + q \gamma u v) \right. \\
\left. +(1 + \gamma)(q - \gamma^{-1} s_0)(1 - q^{1/ 2} u v) - (1 + \gamma) q^{1/ 2} (q^{1/ 2} + \gamma^{-1} s_0) (u + v) \right), 
\end{multline*} 	
and we set $s_j=s$ for all $j \ge p$,  $u_i = \frac{s+x_i}{1 + s x_i}$ for all $i$ and a variable $s$, and consider
both sides as power series in $x_1,x_2,\ldots,x_n$.
\end{thm}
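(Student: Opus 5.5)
The plan is to induct on $p$, peeling off the parts of $\lambda$ equal to $0$. Since the statement is a power-series identity in $x_1,\dots,x_n$ with $u_i=\frac{s+x_i}{1+sx_i}$, it helps to note first that
$$\frac{u-s}{1-su}=x,$$
so every factor $\frac{u_i-s_j}{1-s_ju_i}$ with $j\ge p$ is just $x_i$. The sum over large parts is therefore a genuine geometric-type sum, and convergence is not an issue. Two things make the induction work:
\begin{itemize}
\item At $\gamma=1$ the $m_0$-weight collapses to the uniform one. Indeed $(q;q)_m=(q^{1/2};q^{1/2})_m(-q^{1/2};q^{1/2})_m$, so the weight becomes $\frac{(-s_0;q^{1/2})_m}{(q^{1/2};q^{1/2})_m}$, the same shape as for $r\ge1$.
\item The parameter $\gamma$ only enters through the zero parts.
\end{itemize}

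\emph{Step 1 (splitting the symmetrizer).} Fix $\lambda$ with $m_0(\lambda)=k$. In \eqref{eq:def of F lambda}, split $\mathbf{Sym}$ according to the set $I\subseteq\{1,\dots,n\}$, $|I|=n-k$, of variables carrying the positive parts. Symmetrizing the $k$ variables with part $0$ uses the standard identity
$$\mathbf{Sym}_{v_1,\dots,v_k}\prod_{i<j}\frac{v_i-qv_j}{v_i-v_j}=\frac{(q;q)_k}{(1-q)^k}.$$
This gives
$$\F_\lambda(u|s_0,s_1,\dots)=\sum_{I}\,(q;q)_k\prod_{j\notin I}\frac{1}{1-s_0u_j}\prod_{i\in I,\,j\notin I}\frac{u_i-qu_j}{u_i-u_j}\prod_{i\in I}\frac{u_i-s_0}{1-s_0u_i}\;\F_{\lambda^+-1}(u_I|s_1,s_2,\dots).$$
Here $\lambda^+-1$ denotes the positive parts of $\lambda$, each reduced by $1$.

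\emph{Step 2 (inserting the inductive hypothesis).} The shifted sequence $(s_1,s_2,\dots)$ satisfies the hypothesis with $p-1$, and the $u_i$ are unchanged. For fixed $I$, the sum over the positive parts is therefore exactly the left-hand side of the theorem in the variables $u_I$, with parameters $s_1,s_2,\dots$ and $\gamma=1$. By induction this sum equals the right-hand side evaluated at $\gamma=1$ and $s_0\mapsto s_1$, which is a Pfaffian of size $|I|$.

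The zero parts then contribute only the scalar
$$\frac{(-\gamma q^{1/2};q^{1/2})_k(-\gamma^{-1}s_0;q^{1/2})_k\,(q;q)_k}{(q;q)_k}\prod_{j\notin I}\frac{1}{1-s_0u_j}.$$
So the left-hand side for $p$ becomes an explicit weighted sum over subsets $I$ of Pfaffians for $p-1$. The same reduction handles the base case $p=0$: there $s_r=s$ for every $r$, and the inner object is Theorem~\ref{main1}-like with all $s_j=s$. For it I would either verify the Pfaffian identity directly at $p=0$, or reduce further to $n$ small.

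\emph{Step 3 (the Pfaffian subset-sum identity).} What remains is to show that this subset sum equals the stated Pfaffian with general $\gamma$ and $s_0$. I would try two routes.
\begin{itemize}
\item First, a minor-summation / Pfaffian expansion argument in the style of Ishikawa–Wakayama. The cross factor $\prod_{i\in I,j\notin I}\frac{u_i-qu_j}{u_i-u_j}$, together with the prefactor $\prod_{i<j}\frac{1-qu_iu_j}{u_i-u_j}$, should combine into a Schur-Pfaffian-type expression. The sum over $I$ then merges into the single Pfaffian whose entries acquire the $(\gamma-1)$-corrections $f(u_i,u_j)$ and the $i=0$ row.
\item If that is not workable, I would characterize both sides as symmetric rational functions in $u_1,\dots,u_n$. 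The plan is to compare their poles, only at $u_iu_j=1$ and $s_0u_j=1$ after cancellation. Next I would compute the residues or specializations at $u_n=u_{n-1}^{-1}$ and at $u_n=s_0^{-1}$, which should reduce to the $n-2$ and $n-1$ cases via the Pfaffian Laplace expansion. Finally I would check a degree bound together with the value at one point, such as $u_n\to\infty$ or $u_n=1$.
\end{itemize}

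I expect this last step to be the main obstacle. The entries of the target Pfaffian are fairly intricate, and the parity bookkeeping (the extra row $i=0$ when $n$ is odd) must be tracked through the recursion.
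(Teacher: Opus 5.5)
Your Steps~1 and~2 are correct, and they organize the reduction differently from the paper. The paper peels off the smallest part $l$ of $\lambda$ and sums the tail $l\ge p$ as a geometric series with a telescoping argument. It proves the case $\gamma=1$ (Corollary~\ref{main2:cor}) first, and only then adds and subtracts the $l=0$ terms to reach general $\gamma$. You peel only the zero parts, and at $\gamma=1$ the right-hand side does not involve $s_0$ at all (the matrix $M^{1}$ is free of $s_0$). So you land directly on a $p$-independent subset-sum identity. After multiplying by $\prod_i(1-s_0u_i)(1-u_i)\prod_{i<j}(u_i-u_j)(1-u_iu_j)$, this is exactly the paper's polynomial identity \eqref{eq: key Lemma 2} with $s=s_0$, summed over all $T\subseteq[n]$ including $T=[n]$.

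Your base case, however, is not actually handled. For $p=0$ the shifted sequence is again $(s,s,\dots)$, so the $I=[n]$ term is $\prod_i x_i\cdot\widehat H_2(u_1,\dots,u_n|s,s,\dots)$, which is the unknown itself. You must treat $\gamma=1$ first: move this term to the left, invert $1-\prod_i x_i$ as a power series, and induct on $n$ using the $\gamma=1$ case of the same subset-sum identity. This is the paper's geometric-series step in disguise; Theorem~\ref{main1} plays no role.

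The genuine gap is Step~3, which carries the entire content of the proof and is left open. In the paper, denominators are cleared and the Pfaffian matrices are conjugated by diagonal matrices so that all entries are polynomial. Both sides are then polynomials of degree at most $2n-1$ in $u_1$, compared at $2n-1$ points:
\begin{itemize}
\item $u_1=u_k$: the left side vanishes by antisymmetry of the Pfaffian, the right side by a sign-reversing involution exchanging the roles of $1$ and $k$ in $T$.
\item $u_1=u_k^{-1}$: only $T\ni 1,k$ survive, and a Laplace expansion along row $1$ reduces to size $n-2$.
\item $u_1=s$: only $T\not\ni 1$ survive.
\end{itemize}
This last point is the crux, and it does \emph{not} reduce to the same statement at size $n-1$. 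Splitting off the first factors of the two $q^{1/2}$-Pochhammer symbols produces the weights with $(\gamma,s)$ replaced by $(q^{1/2}\gamma,qs)$. Hence the induction has to run over all $\gamma$; even $\gamma=1$ needs $\gamma=q^{1/2}$ one level down. One also needs a separate Pfaffian identity, proved by its own interpolation in $u_2$, relating $\pf(\overline{M^{\gamma}})|_{u_1=s}$ to the smaller Pfaffian $\pf(\overline{M^{q^{1/2}\gamma}_{[n]\setminus\{1\}}})$ with $s$ replaced by $qs$. The remaining ambiguity $C\prod_i(u_i-s)\prod_{i<j}(u_i-u_j)(1-u_iu_j)$ is finally removed by checking that both sides vanish at $(u_1,u_2)=(1,q^{-1})$.

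Your route~(ii) does not identify this parameter shift or the auxiliary identity. You give no reason why a residue at $u_n=s_0^{-1}$ would reproduce the same identity at size $n-1$ with unchanged parameters. Your list of poles also omits the genuine poles at $u_i=1$. Route~(i) is not developed. So the proposal correctly reduces the theorem to the key identity but does not prove it.
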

Kawanaka's identity for \emph{Hall-Littlewood polynomials} \cite{KawankasIdentity} is a special case of this identity too. It is obtained by first setting $s_0=0$, then $\gamma=0$, and then $s_j=0$ for all $j>0$.

Setting $\gamma=1$, we obtain the following corollary. In fact, Corollary~\ref{main2:cor} plays an essential role 
in the proof of Theorem~\ref{main2} as it will be necessary to prove this before we can deduce Theorem~\ref{main2} from it.

\begin{cor}\label{main2:cor}
    Let $n,p$ be non-negative integers, then 
    \begin{multline}\label{eq: main2 cor} 
    \sum_{\lambda_1 \ge \lambda_2 \ge \ldots \ge \lambda_n \ge 0} 
    \prod_{r \ge 0} \frac{(-s_r;q^{1 /2})_{m_r(\lambda)}}{(q^{1 /2};q^{1 /2})_{m_r(\lambda)}}  \F_{\lambda}(u_1,\ldots,u_n|s_0,s_1,\ldots)
    =  \prod_{i=1}^n \frac{1+q^{1/2}}{1-u_i}\\
    \times
    \prod_{1 \le i < j \le n} \frac{1-q  u_i u_j}{u_i-u_j} 
    \pf_{\chi_{\mathrm{even}}(n) \le i < j \le n} \left(
    		\begin{cases}
    			1, & i=0,\\
    			\frac{(u_i-u_j)((1+q)(1- q^{1 /2} u_i u_j) + (u_i + u_j)(q^{1 /2}- q)) }{(1+q^{1/2})(1- u_i u_j)(1- q u_i u_j)},& i\ge 1,
    		\end{cases} \right),
    \end{multline}
where we set $s_j=s$ for all $j \ge p$,  $u_i = \frac{s+x_i}{1 + s x_i}$ for all $i$ and a variable $s$, and consider
both sides as power series in $x_1,x_2,\ldots,x_n$.    
\end{cor}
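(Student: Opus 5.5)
The plan is to exploit the fact that the right-hand side of \eqref{eq: main2 cor} does not involve any of the parameters $s_0,s_1,\ldots$. The corollary therefore splits into two statements. The first is that the left-hand side, as a power series in $x_1,\ldots,x_n$ (with $u_i=\frac{s+x_i}{1+sx_i}$ and $s_j=s$ for $j\ge p$), is independent of $s_0,\ldots,s_{p-1}$. The second is that it is also independent of $s$, and that for one convenient value of $s$ it equals the Pfaffian expression. Note first the key simplification for the tail. With $u=\frac{s+x}{1+sx}$ we have $\frac{u-s}{1-su}=x$ and $\frac{1}{1-su}=\frac{1+sx}{1-s^2}$. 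Hence every factor $\frac{u_i-s_j}{1-s_ju_i}$ with $j\ge p$ in \eqref{eq:def of F lambda} becomes a monomial $x_i$. As a result, the sum over the parts $\lambda_i\ge p$ is a geometric series in the $x_i$, which justifies the power series interpretation.

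\emph{Step 1 (reduction $p\to p-1$).} Fix $r=p-1$ and show that the left-hand side does not change if $s_r$ is replaced by $s$. To do this, group the partitions according to the set of indices $i$ with $\lambda_i\ge r$. Inside the symmetrizer, the variables carrying parts $>r$ contribute a factor $\frac{u_i-s_r}{1-s_ru_i}$ followed by the geometric tail, while those carrying parts equal to $r$ contribute $\frac{1-q}{1-s_ru_i}$. I would first symmetrize over the block of variables with $\lambda_i\ge r$ separately. For that I use the standard identity
\[
\mathbf{Sym}_{u_1,\ldots,u_m}\Bigl[\prod_{1\le i<j\le m}\frac{u_i-qu_j}{u_i-u_j}\Bigr]=\frac{(q;q)_m}{(1-q)^m},
\]
together with its relatives, which handle the mixed block of parts $=r$ and $>r$. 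Combined with the weight $\frac{(-s_r;q^{1/2})_m}{(q^{1/2};q^{1/2})_m}$, this should produce, for each fixed block of variables, a sum over $m=m_r(\lambda)$ of $s_r$-dependent rational functions. Summing over $m$ by a $q$-binomial theorem-type identity should make the $s_r$-dependence cancel. Concretely, I would try to show that
\[
\sum_{S\subseteq\{\text{block}\}}\frac{(-s_r;q^{1/2})_{|S|}}{(q^{1/2};q^{1/2})_{|S|}}\,(\cdots)
\]
equals the same expression at $s_r=s$. I would verify this on the level of the symmetrized summand by a partial fraction argument in $s_r$: check that there are no poles at $s_r=u_i^{-1}$ and compare the behaviour at $s_r\to\infty$.

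\emph{Step 2 (base case $p=0$).} Now all $s_j=s$. Here I would argue that the left-hand side is also independent of $s$. The cleanest route is to reuse Step 1: the argument there shows that changing a single $s_r$ does nothing, and this can be applied to the homogeneous tail as well, as a limiting case where $p$ is taken large and the series is truncated in degree. I then specialize to $s=-q^{-1/2}$. In that case the left-hand side is exactly the sum appearing in \cite[Theorem~1.1]{FHRobbins}, and the right-hand side coincides with the Pfaffian there. So the base case follows from that result. Alternatively, one could specialize $s=0$, which gives a Hall--Littlewood sum, and compare with Kawanaka's identity \cite{KawankasIdentity} in Pfaffian form; but the first route needs no extra translation.

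The main obstacle is Step 1, namely producing the exact $q$-series identity that makes the $s_r$-dependence cancel after the multiplicity weight $\frac{(-s_r;q^{1/2})_m}{(q^{1/2};q^{1/2})_m}$ is attached. The difficulty is that the parts equal to $r$ and the parts larger than $r$ interact through the cross factors $\frac{u_i-qu_j}{u_i-u_j}$. If the direct symmetrization becomes unwieldy, my fallback is to treat the left-hand side as a rational function of $s_r$, check its residues at $s_r=u_i^{-1}$ after symmetrization (they should vanish by antisymmetry in pairs of variables), and then evaluate at the single point $s_r=-q^{1/2}$. At that point the weight truncates, and the comparison with the $s_r=s$ configuration becomes a finite check.
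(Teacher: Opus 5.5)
\textbf{There is a genuine gap, and it sits in your Step 1.}

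Peeling off column $r$ removes the parts equal to $r$ and shifts the larger parts down. As in Corollary~\ref{rec2v}, this gives
\[
\widehat{H}_2(u_{[n]}\,|\,s_r,s_{r+1},\ldots)=\sum_{T\subseteq[n]}W_{s_r}(T)\,\widehat{H}_2(u_T\,|\,s_{r+1},\ldots),
\]
where
\[
W_t(T)=(-q^{1/2};q^{1/2})_{n-|T|}(-t;q^{1/2})_{n-|T|}\prod_{i\in T}(u_i-t)\prod_{i=1}^n\frac{1}{1-tu_i}\prod_{i\in T,\,j\in[n]\setminus T}\frac{u_i-qu_j}{u_i-u_j}.
\]
So independence of $s_r$ is a property of the tail family $G(u_T)=\widehat{H}_2(u_T\,|\,s_{r+1},\ldots)$. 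It is not something a $q$-binomial summation over $m_r(\lambda)$ can produce locally.

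Already for $n=1$ the map is $G\mapsto\frac{(1+q^{1/2})(1+s_r)+(u-s_r)G}{1-s_ru}$. This is independent of $s_r$ if and only if $G=\frac{1+q^{1/2}}{1-u}$. Its residue at $s_r=u^{-1}$ is proportional to $(1+u^{-1})\bigl((1+q^{1/2})-(1-u)G\bigr)$. So the residue does not vanish ``by antisymmetry''; it vanishes only because $G$ is the right function.

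In general, Step 1 therefore requires two things. First, you must know that the tail equals the Pfaffian expression $\Phi$. Second, you must prove $\Phi(u_{[n]})=\sum_{T\subseteq[n]}W_t(T)\Phi(u_T)$ for arbitrary $t$. That is exactly \eqref{eq: what we reuse later}, i.e.\ the $\gamma=1$ case of \eqref{eq: key Lemma 2}, which is the real content of the corollary.

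The paper proves it by interpolation in $u_1$, using the evaluations $u_1=u_k$, $u_1=u_k^{-1}$ and $u_1=s$. The case $u_1=s$ needs the auxiliary identity \eqref{eq: Key Lemma main2 identity A}, and the pair $(u_1,u_2)=(1,q^{-1})$ kills the remaining constant. Your proposal contains no argument for this identity.

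A smaller error: at $s_r=-q^{1/2}$ the weight is identically $1$, so nothing truncates there. Truncation happens at $s_r=-1$, where $W_{-1}$ is the identity operator. Since $W_t(T)$ is bounded as $t\to\infty$, your residue idea reduces to showing that $W_t\Phi$ has vanishing residues at $t=u_k^{-1}$. That is again an identity of the same depth as the paper's lemma, not a consequence of antisymmetry.

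Step 2 inherits the problem and becomes circular. Step 1 at $r=p-1$ needs the homogeneous tail $\widehat{H}_2(u_T\,|\,s,s,\ldots)$ for all $T\subseteq[n]$, including $T=[n]$. But Step 2 tries to obtain that tail from Step 1.

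The limiting argument is also not formal. With tail spin $s$, the factors $\frac{u_i-s'}{1-s'u_i}$ have nonzero constant term as series in $x_i$. So truncating in degree does not let you push the modified columns off to infinity; you would need an analytic convergence argument.

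The appeal to \cite{FHRobbins} at $s=-q^{-1/2}$ is fine in itself, but it cannot be propagated to general $s$ without the fixed-point identity. Once that identity is available, neither FHRobbins nor any independence-of-$s$ argument is needed. In the recurrence of Corollary~\ref{rec2v}, the terms with $l\ge p$ form a geometric series in $\prod_i x_i$, so the recurrence determines the left-hand side by induction on $n$. That is how the paper concludes.
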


As mentioned above, the fully inhomogeneous spin Hall-Littlewood symmetric rational functions are a generalization of \emph{Hall-Littlewood polynomials} $P_\lambda(x_1,\dots,x_n;q)$, see \cite{BorodinPetrov, Petrov}. Recall that the Hall-Littlewood polynomials are given by the following bialternant formula.
\begin{equation*}
P_\lambda(x_1,\dots,x_n;q)\Def (1-q)^n\prod_{r\ge0}\frac{1}{(q;q)_{m_r(\lambda)}} \Symm{x}{n}{\left[\prod_{1\le i<j\le n} \frac{x_i-qx_j}{x_i-x_j} \prod_{i=1}^nx_i^{\lambda_i}\right]}
\end{equation*} 
Using \eqref{eq:def of F lambda}, it follows that 
\begin{equation}
\label{HLspecial} 
     \F_{\lambda}(u_1,\ldots,u_n|0)=\prod_{r\ge 0}(q;q)_{m_r(\lambda)}P_\lambda(x_1,\dots,x_n;q).
\end{equation}
Therefore, setting $s_j=0$ for all $j\ge 0$ in Corollary \ref{main2:cor}, we obtain the following Littlewood identity for Hall-Littlewood polynomials, which, to the best of our knowledge, is new.
\begin{cor}
    For any non-negative integer $n$, we have
    \begin{multline}
        \sum_{\lambda_1 \ge \lambda_2 \ge \ldots \ge \lambda_n \ge 0}\prod_{r\ge 0}(-q^{1/2};q^{1/2})_{m_r(\lambda)}P_\lambda(x_1,\dots,x_n;q)=\prod_{i=1}^n\frac{1+q^{1/2}}{1-x_i}\prod_{1\le i<j\le n}\frac{1-qx_ix_j}{x_i-x_j}\\\times \pf_{\chi_{\mathrm{even}}(n) \le i < j \le n} \left(
    		\begin{cases}
    			1, & i=0,\\
    			\frac{(x_i-x_j)((1+q)(1- q^{1 /2} x_i x_j) + (x_i + x_j)(q^{1 /2}- q)) }{(1+q^{1/2})(1- x_i x_j)(1- q x_i x_j)},& i\ge 1,
    		\end{cases} \right).
    \end{multline}
   \end{cor}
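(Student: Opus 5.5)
This corollary is the specialization $s_j=0$ for all $j\ge 0$ of Corollary~\ref{main2:cor}, combined with \eqref{HLspecial}. I would proceed in three steps.

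\textbf{Step 1: the change of variables.} With $s_j=0$ for all $j$, we may take $p=0$ and $s=0$. Then $u_i=\frac{s+x_i}{1+sx_i}=x_i$. So the right-hand side of \eqref{eq: main2 cor} becomes exactly the claimed product times the claimed Pfaffian, with every $u_i$ replaced by $x_i$. Both sides are still read as power series in $x_1,\dots,x_n$.

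\textbf{Step 2: the weights.} For $s_r=0$ the weight in \eqref{eq: main2 cor} reduces to
\[
\prod_{r\ge 0}\frac{(0;q^{1/2})_{m_r(\lambda)}}{(q^{1/2};q^{1/2})_{m_r(\lambda)}}=\prod_{r\ge 0}\frac{1}{(q^{1/2};q^{1/2})_{m_r(\lambda)}}.
\]
By \eqref{HLspecial}, $\F_\lambda(x_1,\dots,x_n|0)=\prod_{r\ge 0}(q;q)_{m_r(\lambda)}P_\lambda(x_1,\dots,x_n;q)$. To combine these I use
\[
(q;q)_m=\prod_{i=1}^m(1-q^{i/2})(1+q^{i/2})=(q^{1/2};q^{1/2})_m\,(-q^{1/2};q^{1/2})_m .
\]
Hence each summand equals $\prod_{r\ge0}(-q^{1/2};q^{1/2})_{m_r(\lambda)}P_\lambda(x_1,\dots,x_n;q)$, which is the left-hand side of the claim.

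\textbf{Step 3: legitimacy of the specialization.} I need to check that setting $s_j=0$ is admissible, since Corollary~\ref{main2:cor} is stated with $s$ a variable and equality as power series in the $x_i$. The definition \eqref{eq:def of F lambda} is a polynomial/rational expression in the $s_j$ with no poles at $s_j=0$ near $x=0$. Likewise, the factors $\frac{1}{1-s_ju_i}$ are regular there.

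The main obstacle is the convergence of the sum over $\lambda$. For each fixed total degree in the $x_i$, only finitely many $\lambda$ contribute: at $s=0$, $\F_\lambda$ is homogeneous of degree $|\lambda|$. In general, $\prod_{j<\lambda_i}(u_i-s_j)$ is divisible by $x_i^{\#\{j<\lambda_i:\, j\ge p\}}$, because $u_i-s=x_i(1-s^2)/(1+sx_i)$. So coefficient extraction commutes with the specialization $s_j=0$, $s=0$, and the identity specializes coefficientwise.

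Combining the three steps yields the stated identity.
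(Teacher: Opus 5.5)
Your proposal is correct and matches the paper's argument: set $s_j=0$ (so $u_i=x_i$) in Corollary~\ref{main2:cor}, apply \eqref{HLspecial}, and absorb the weights via $(q;q)_m=(q^{1/2};q^{1/2})_m(-q^{1/2};q^{1/2})_m$, a step the paper leaves implicit. Your Step~3 justifies the specialization, which the paper does not spell out. It is sound, though the $x$-adic order bound is cleanest when stated after multiplying by $\prod_{i<j}(u_i-u_j)$, since the individual terms of the symmetrizer are not themselves power series.
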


\medskip

Next, we briefly outline how our results were discovered. In \cite[Section~8]{FHRobbins}, the following connection was observed between fully inhomogeneous spin Hall-Littlewood symmetric rational functions and the \emph{modified Robbins polynomials} $R^*_{(k_1,\dots,k_n)}(x_1,\dots,x_n;u,v,w)$. The modified Robbins polynomials are generating functions of \emph{alternating sign matrices} as will be recalled in detail in Section~\ref{Sec: combinatorics}.

\begin{lem}\label{lem: Connection Robbins sHL}
    For any integer $n\ge 1$ and a partition $\lambda=(\lambda_1,\dots,\lambda_n)$, where parts equal to zero are allowed, the following relation between the fully inhomogeneous spin Hall-Littlewood symmetric rational functions and the modified Robbins polynomial holds.
    \begin{multline}\label{eq: relation sHL and mod Robbins}
    R^*_{(\lambda_n,\dots,\lambda_1)}(x_1,\dots,x_n;q^{1/2}-q^{-1/2},q^{1/2}-q^{-1/2},q^{-1}-q)
     =(1-q^{-1})^{\binom{n}{2}}\\\left.\times\prod_{i=1}^n\frac{1-q^{-1}}{1-q+(q^{1/2}-q^{-1/2})x_i} \F_\lambda(u_1,\dots,u_n|-q^{-1/2},-q^{-1/2},\ldots)\right|_{u_i=\frac{-q^{-1/2}+x_i}{1-q^{-1/2}x_i}}.
\end{multline}
\end{lem}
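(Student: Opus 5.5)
Since the modified Robbins polynomials have not yet been recalled at this point, the natural route is to compare a symmetrizer formula for $R^*$ with the defining formula \eqref{eq:def of F lambda} for $\F_\lambda$. I will use the operator formula for the modified Robbins polynomial from \cite{FHRobbins}, which expresses $R^*_{(k_1,\dots,k_n)}(x_1,\dots,x_n;u,v,w)$ as an antisymmetrizer
\[
\asym_{x_1,\dots,x_n}\Bigl[\prod_{1\le i<j\le n}\frac{1+u x_j+v x_i+w x_i x_j}{x_j-x_i}\prod_{i=1}^n x_i^{k_i}(1+\dots)^{\cdots}\Bigr],
\]
or, more precisely, the formula with operators $E_{x_i}$ and $\Delta_{x_i}=E_{x_i}-\mathrm{id}$. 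There the monomial $x_i^{k_i}$ is replaced by a product of factors of the form $(1+x_i)^{k_i}$ combined with a prefactor. I will take this formula as the starting point, evaluate it at $u=v=q^{1/2}-q^{-1/2}$ and $w=q^{-1}-q$, and rewrite everything in terms of $u_i=\frac{-q^{-1/2}+x_i}{1-q^{-1/2}x_i}$.

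The key steps, in order, are as follows.

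\begin{enumerate}
\item \textbf{Pairwise factor.} Compute how the pairwise factor transforms. With the given specialization,
\[
1+(q^{1/2}-q^{-1/2})(x_i+x_j)+(q^{-1}-q)x_ix_j
\]
should, after substituting $x_i=\frac{u_i+q^{-1/2}}{1+q^{-1/2}u_i}$, become a constant times $\frac{(u_j-qu_i)\cdot(\ldots)}{(1+q^{-1/2}u_i)(1+q^{-1/2}u_j)}$. Similarly, $x_j-x_i$ becomes $(1-q^{-1})(u_j-u_i)$ divided by the same denominators. The ratio should then give $\frac{u_i-qu_j}{u_i-u_j}$ up to orientation. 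The reversal of the index order, $(\lambda_n,\dots,\lambda_1)$ versus $\lambda$, accounts for the antisymmetrizer/symmetrizer conversion and the swap $i\leftrightarrow j$. This step is where the constant $(1-q^{-1})^{\binom n2}$ arises.
\item \textbf{One-variable factor.} Check the one-variable factor. With $s_j=-q^{-1/2}$, the product $\prod_{j<\lambda_i}\frac{u_i-s_j}{1-s_ju_i}$ equals $\bigl(\frac{u_i+q^{-1/2}}{1+q^{-1/2}u_i}\bigr)^{\lambda_i}=x_i^{\lambda_i}$. So the spin factor is exactly the monomial, which is consistent with the Robbins side if the Robbins formula in these parameters reduces to a plain monomial times a rational prefactor. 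The remaining factor $\frac{1-q}{1-s_{\lambda_i}u_i}=\frac{1-q}{1+q^{-1/2}u_i}$, expressed in $x_i$, should combine with the prefactor to give $\frac{1-q^{-1}}{1-q+(q^{1/2}-q^{-1/2})x_i}$.
\item \textbf{Collecting factors.} Collect all denominators $(1+q^{-1/2}u_i)$ coming from the pairwise factors. Each index appears $n-1$ times in the numerator and the denominator, so these should cancel, leaving only the single-variable prefactors.
\end{enumerate}

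The main obstacle is the first step. The modified Robbins polynomial is defined combinatorially, and its antisymmetrizer formula involves difference operators rather than a clean monomial, unless the specialization $w=-(u+v+uv)$-type relation makes the operator formula collapse. I would first check whether the chosen parameters satisfy such a degeneracy, since this would let the operator formula reduce to an ordinary alternant. If it does not, I would instead compare both sides via a recursion in $\lambda$: both sides satisfy the same exchange relations and the same initial value $\lambda=0$. Symmetry in the $x_i$, together with a Lagrange-interpolation argument in one variable, would then conclude.
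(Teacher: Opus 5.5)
Your plan of comparing a symmetrizer formula for $R^*$ factor by factor with \eqref{eq:def of F lambda} is exactly how the paper obtains the lemma. The paper says it ``can easily be deduced from the bialternant formulas'', and your Steps~2 and~3 are correct.

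The gap is the formula you start from. The one needed is the bialternant formula \eqref{eq:Antisymm for mod Robbins} of \cite{nASMDPP}, equivalently
\[
R^*_{(k_1,\dots,k_n)}(x_1,\dots,x_n;u,v,w)=\Symm{x}{n}\Bigl[\prod_{1\le i<j\le n}\frac{ux_ix_j+v+wx_i}{x_j-x_i}\prod_{i=1}^n x_i^{k_i}\Bigr].
\]
It has plain monomials and no difference operators, and it needs no degeneracy of $(u,v,w)$.

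The pairwise factor $1+ux_j+vx_i+wx_ix_j$ you recall, together with the $(1+x_i)^{k_i}$-type monomials, is not this formula, and it cannot work. At $u=v$ it is symmetric in $x_i,x_j$. After the M\"obius substitution and clearing $(1+q^{-1/2}u_i)(1+q^{-1/2}u_j)$, it becomes a nonzero symmetric polynomial of degree at most one in each of $u_i,u_j$; explicitly, $(2-2q^{-1}+q^{-2})+q^{-1/2}(u_i+u_j)+(2-q)u_iu_j$. Such a polynomial is never divisible by $u_j-qu_i$, since by symmetry it would then also be divisible by $u_i-qu_j$. So Step~1 fails as written. Your fallback, an unspecified recursion in $\lambda$ plus interpolation, is not an argument.

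With the correct formula, Step~1 is a short computation. Put $c=q^{1/2}-q^{-1/2}$ and $x_i=\frac{u_i+q^{-1/2}}{1+q^{-1/2}u_i}$. Then
\[
x_j-x_i=\frac{(1-q^{-1})(u_j-u_i)}{(1+q^{-1/2}u_i)(1+q^{-1/2}u_j)},
\qquad
cx_ix_j+c+(q^{-1}-q)x_i=\frac{-c^2(u_i-q^{-1}u_j)}{(1+q^{-1/2}u_i)(1+q^{-1/2}u_j)},
\]
so that
\[
\frac{cx_ix_j+c+(q^{-1}-q)x_i}{x_j-x_i}=(1-q^{-1})\,\frac{u_j-qu_i}{u_j-u_i},
\]
with no leftover factor. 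This is where $(1-q^{-1})^{\binom n2}$ comes from.

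Reversing the order of the variables inside the symmetrizer turns this factor into $\frac{u_i-qu_j}{u_i-u_j}$ and the exponents $(\lambda_n,\dots,\lambda_1)$ into $\lambda$. The reversal handles only this swap $i\leftrightarrow j$; there is no antisymmetrizer/symmetrizer conversion to account for.

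Finally, $1-q+cx_i=(1-q)(1-q^{-1/2}x_i)$ and $\frac{1-q}{1+q^{-1/2}u_i}=\frac{(1-q)(1-q^{-1/2}x_i)}{1-q^{-1}}$. Hence the prefactor $\prod_i\frac{1-q^{-1}}{1-q+cx_i}$ cancels exactly against the factors $\frac{1-q}{1-s_{\lambda_i}u_i}$ in $\F_\lambda$, and the lemma follows.
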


The significance of the modified Robbins polynomials stems from the fact that, in recent work relating alternating sign triangles as well as \emph{alternating sign trapezoids} to \emph{totally symmetric self-complementary plane partitions} and \emph{descending plane partitions} by rather heavy computations, certain Littlewood identities for these polynomials play the most important role, see \cite[Lemma~4.1]{Fis19b}, \cite[Lemma~9]{Fis19a}, \cite[Lemma~3.4]{AlternatingSignPentagonsAndMagogPentagons} 
and \cite[Lemma~9]{fourfold}. This also motivates our study of these identities: bijective proofs of the generalized Littlewood identities would be an important step toward establishing bijections between alternating sign matrix objects and plane partition objects. Moreover, if such bijective proofs exist, it is conceivable that the Robinson–Schensted–Knuth correspondence—which provides a bijective proof of the ordinary Littlewood identity—offers a natural starting point.

Although Lemma \ref{lem: Connection Robbins sHL} can easily be deduced from the bialternant formulas for the respective symmetric functions, we also provide a bijective proof for this relation in the case when $\lambda$ is strictly decreasing, after we have introduced the combinatorial models for the polynomials  $\F_\lambda(u_1,\dots,u_n|s_0,s_1,\ldots)$ as well as the modified Robbins polynomials in Section~\ref{Sec: combinatorics}. A feature of the case where $\lambda$ is strict is that the combinatorial models are signless for both polynomials. This new connection between modified Robbins polynomials and fully inhomogeneous spin Hall-Littlewood symmetric rational functions allowed H\"ongesberg and the first author of this paper to conjecturally generalize two Littlewood identities for the modified Robbins polynomials to the level of fully inhomogeneous spin Hall-Littlewood symmetric rational functions. As announced together with the conjectures in \cite{FHRobbins}, the proofs of the two identities now follow in this paper.

\medskip

{\bf Outline of the paper.} In Section \ref{Sec: combinatorics} we provide some insights concerning the combinatorics underlying our main results. We start by introducing the $\mathfrak{sl}_2$~higher spin six vertex model as the combinatorial model for the fully inhomogeneous spin Hall-Littlewood symmetric rational functions in Subsection \ref{Subsec: higher spin 6VM}. We then explain the background for the modified Robbins polynomial in Subsection \ref{Subsec: mod Robbins Polynomials} and end the section by elaborating more on the connection hinted at in Lemma \ref{lem: Connection Robbins sHL} in Subsection \ref{Subsec: Connection to Robbins polynomials}. 

Miraculously, Theorem \ref{main1} and Theorem \ref{main2} admit fairly similar proofs. We use a recurrence relation for fully inhomogeneous spin Hall-Littlewood symmetric rational functions that is established in Section \ref{Sec: recurrence relations} to deduce recurrence relations for the left-hand sides of the identities in both theorems. We then use induction to show in Sections~\ref{sec: proof of Thm main1} and \ref{sec: proof of Thm main2}
that the right-hand sides of Theorems~\ref{main1} and  \ref{main2} satisfy the same recurrence relations. This is done by reducing the equations that are needed to the polynomial identities stated in Lemma \ref{lem: Key lemma for main1} and Lemma \ref{lem: Key lemma for main2}, respectively, which we then prove by 
interpolation.

\section{The combinatorics of the relation between fully inhomogeneous spin Hall-Littlewood symmetric rational functions and alternating sign matrices}\label{Sec: combinatorics}
This section is structured as follows. We recall the combinatorial model for fully inhomogeneous spin Hall-Littlewood symmetric rational functions in Subsection \ref{Subsec: higher spin 6VM}. Then we explain the background material on the modified Robbins polynomial in Subsection~\ref{Subsec: mod Robbins Polynomials}, and end the section by providing a bijective proof of Lemma \ref{lem: Connection Robbins sHL} in the case of a strictly decreasing partition in Subsection \ref{Subsec: Connection to Robbins polynomials}.

\subsection{The $\mathfrak{sl}_2$~higher spin six vertex model}\label{Subsec: higher spin 6VM} Fully inhomogeneous spin Hall-Littlewood symmetric rational functions are, in a sense, the generating functions of the $\mathfrak{sl}_2$~\emph{higher spin six vertex model} with certain boundary conditions \cite{BorodinPetrov, Petrov}, which we introduce next.

Let $n\ge 0$ be an integer and $\lambda=(\lambda_1,\dots,\lambda_n)$ be a partition of length $n$ where we also allow parts equal to zero in $\lambda$. We consider ensembles of $n$ paths 
\begin{itemize} 
\item with step set $\{(1,0),(0,1)\}$ in $\mathbb{Z}_{\ge 0}\times \{1,\dots,n\}$,
\item such that
the $i$-th path starts at vertex $(0,i)$ and ends at vertex $(\lambda_i,n)$, $i=1,\ldots,n$, 
\item and the paths are mutually non-crossing and must not intersect along horizontal steps $(1,0)$, however, they may intersect along vertical steps $(0,1)$. 
\end{itemize} 
An example for the partition $\lambda=(5,5,2,0)$ is given in Figure~\ref{fig: A set of paths in the higher spin six vertex model}.
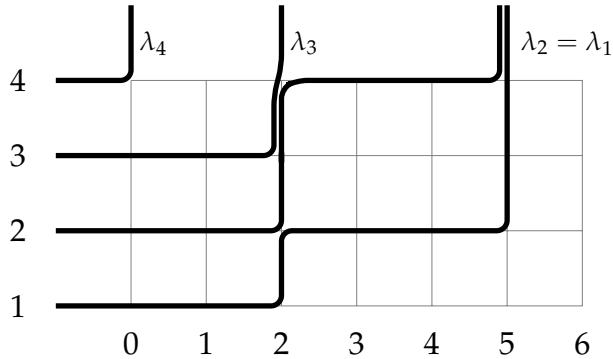
\begin{figure}[h]
    \centering
    \begin{tikzpicture}
        \draw[step=1cm,gray,very thin] (0,1) grid (6,4);
        \path [draw, line width=2pt, rounded corners](-1,4)--(0,4)--(0,5) (-1,3)--(1.9,3)--(1.9,3.8)--(2,4.2)--(2,5) (-1,2)--(2,2)--(2,2.9)--(2,3.05)--(2,3.9)--(2.2,4)--(4.9,4)--(4.9,5) (-1,1)--(2,1)--(2,2)--(5,2)--(5,5);
        \node at (0.3,4.5) {\Small$\lambda_4$};
        \node at (2.3,4.5) {\Small$\lambda_3$};
        \node at (5.8,4.5) {\Small$\lambda_2=\lambda_1$};

        \node at (-1.5,1) {$1$};
        \node at (-1.5,2) {$2$};
        \node at (-1.5,3) {$3$};
        \node at (-1.5,4) {$4$};

        \node at (1,0.5) {$1$};
        \node at (2,0.5) {$2$};
        \node at (3,0.5) {$3$};
        \node at (4,0.5) {$4$};
        \node at (5,0.5) {$5$};
        \node at (6,0.5) {$6$};
        \node at (0,0.5) {$0$};
    \end{tikzpicture}
    \caption{An ensemble of paths in the $\mathfrak{sl}_2$~higher spin six vertex model for $\lambda=(5,5,2,0)$ with additional horizontal steps at the start and additional vertical steps at the top.}
    \label{fig: A set of paths in the higher spin six vertex model}
\end{figure}

For the purpose of introducing the weight, we extend each path at its starting point by a horizontal unit step to the left and at its endpoint by a 
vertical step towards the top. Now we assign weights to the individual vertices of the ensemble depending on the local configurations, see Figure \ref{fig:higher spin six vertex model weigths} for all possible configurations. The total weight of the ensemble is then the product of the weights of all the vertices in $\mathbb{Z}_{\ge 0}\times \{1,\dots,n\}$. 

The individual vertex weight in position $(j,i)$  depends on a global parameter $q$, on the \emph{spectral parameter} $u_i$, and the \emph{spin parameter} $s_j$. The local configurations are encoded by four integers
$i_1,j_1,i_2,j_2$, where $i_1,i_2$ denote the numbers of paths which enter from below and exit towards the top, respectively, and $j_1,j_2$ denote the numbers of paths which enter from the left and exit towards the right, respectively. By definition $j_1,j_2 \in \{0,1\}$ and $i_1+j_1=i_2+j_2$, where the latter is also addressed as \emph{arrow preservation}. The concrete vertex weights $w_{u,s}(i_1,i_2;j_1,j_2)$ are provided in Figure \ref{fig:higher spin six vertex model weigths}, where we have replaced $u_i,s_j$ by generic variables $u,s$.\footnote{These are the weights introduced in \cite{BorodinPetrov, Petrov} for the special case $\xi_i=1$, but they generalize the weights from \cite{Borodin} where all $s_j$ are equal.}



\begin{figure}[h]
    \centering
    \begin{tabular}{c||c|c|c|c}
         \begin{tikzpicture}[scale=0.5]
             \path [draw=black!40, line width=3pt] (-1,0)--(1,0) (0,-1)--(0,1);
             \filldraw[black] (0,0) circle (2pt);
             \node at (0,-1.5) {$i_1$};
             \node at (0,1.5) {$i_2$};
             \node at (-1.5,0) {$j_1$};
             \node at (1.5,0) {$j_2$};
         \end{tikzpicture}&\begin{tikzpicture}[scale=0.5]
             \path[draw=black, dotted] (-1,0)--(1,0) (0,1)--(0,-1);
             \draw (0,-1)--(0,-0.2) (0.2,-1)--(0.2,-0.05) (-0.2,-1)--(-0.2,-0.05) (0,1)--(0,0.2) (0.2,1)--(0.2,0.05) (-0.2,1)--(-0.2,0.05);
             \filldraw[black] (0,0) circle (2pt);
             \node at (0,-1.5) {$g$};
             \node at (0,1.5) {$g$};
         \end{tikzpicture}&\begin{tikzpicture}[scale=0.5]
             \path[draw=black, dotted] (-1,0)--(1,0) (0,1)--(0,-1);
             \draw (0,-1)--(0,-0.2) (0.2,-1)--(0.2,-0.05) (-0.2,-1)--(-0.2,-0.05) (0,1)--(0,0.2)  (-0.2,1)--(-0.2,0.05) (0.2,0)--(1,0);
             \filldraw[black] (0,0) circle (2pt);
             \node at (0,-1.5) {$g$};
             \node at (0,1.5) {$g-1$};
         \end{tikzpicture}&\begin{tikzpicture}[scale=0.5]
             \path[draw=black, dotted] (-1,0)--(1,0) (0,1)--(0,-1);
             \draw (0,-1)--(0,-0.2) (0.2,-1)--(0.2,-0.05) (-1,0)--(-0.2,0) (0,1)--(0,0.2) (0.2,1)--(0.2,0.05) (-0.2,1)--(-0.2,0.05);
             \filldraw[black] (0,0) circle (2pt);
             \node at (0,-1.5) {$g$};
             \node at (0,1.5) {$g+1$};
         \end{tikzpicture}&\begin{tikzpicture}[scale=0.5]
             \path[draw=black, dotted] (-1,0)--(1,0) (0,1)--(0,-1);
             \draw (0,-1)--(0,-0.2) (0.2,-1)--(0.2,-0.05) (-0.2,-1)--(-0.2,-0.05) (0,1)--(0,0.2) (0.2,1)--(0.2,0.05) (-0.2,1)--(-0.2,0.05) (-1,0)--(-0.2,0) (0.2,0)--(1,0);
             \filldraw[black] (0,0) circle (2pt);
             \node at (0,-1.5) {$g$};
             \node at (0,1.5) {$g$};
         \end{tikzpicture} \\
         \hline
         $w_{u,s}(i_1,i_2;j_1,j_2)$&\Large$\frac{1-suq^{g}}{1-su}$&\Large$\frac{u(1-s^2q^{g-1})}{1-su}$&\Large$\frac{1-q^{g+1}}{1-su}$&\Large$\frac{u-sq^g}{1-su}$
    \end{tabular}
    \caption{$\mathfrak{sl}_2$ higher spin six vertex model weights}
    \label{fig:higher spin six vertex model weigths}
\end{figure}
Using the integrability of this vertex model and a version of the algebraic Bethe Ansatz, it was shown in \cite{BorodinPetrov} that the generating function $\F_\lambda(u_1,\dots,u_n| s_0,s_1,\dots)$ of all such ensembles of paths for a fixed partition $\lambda$ is given by the bialternant formula stated in the introduction in \eqref{eq:def of F lambda}.

This path model can also be used to see combinatorially that $\F_\lambda(u_1,\dots,u_n| s_0,s_1,\dots)$ specializes to Schur polynomials after setting $q=0$ and $s_j=0$ for all $j\in\mathbb{Z}_{\ge 0}$: the weights can then be interpreted so that horizontal steps at height $i$ are assigned the weight $u_i$ while vertical steps are assigned the weight $1$. Then, by stretching the paths horizontally apart, one obtains non-intersecting lattice paths which are known to be a combinatorial model for Schur polynomials, see  \cite{EC2}.

\subsection{The modified Robbins polynomials}\label{Subsec: mod Robbins Polynomials}
We provide a brief summary of various results on Robbins polynomials and their connections to alternating sign matrices, as well as symmetric functions, which are necessary to explain the connection to the $\mathfrak{sl}_2$~higher spin six vertex model. The main focus lies on introducing the combinatorial model of \emph{down-arrowed monotone triangles} for the modified Robbins polynomials.

A \emph{Gelfand-Tsetlin pattern} is a triangular array of integers of the following form 
$$\begin{matrix}
     & & & &a_{1,1}& & & &\\
     & & &a_{2,1}& &a_{2,2}& & &\\
     & & \dots& & \dots & & \dots& &\\
     &a_{n-1,1}& & \dots& & \dots& &a_{n-1,n-1}&\\
     a_{n,1}& &a_{n,2}& & \dots& &a_{n,n-1}& &a_{n,n}
    \end{matrix},$$
with weak increase along $\nearrow$-diagonals and $\searrow$-diagonals. The most commonly used combinatorial model for Schur polynomials is certainly \emph{semistandard Young tableaux}; however, another 
possibility are Gelfand-Tsetlin patterns: assign the following weight to a Gelfand-Tsetlin pattern with $n$ rows 
\begin{equation}
\label{eq: polynomial weight of GTs} 
    \prod_{i=1}^{n} x_i^{(\text{sum of elements in row $i$})-(\text{sum of elements in row $i-1$)}},
\end{equation}
then $s_{\lambda}(x_1,\ldots,x_n)$ is the generating function of Gelfand-Tsetlin patterns where the bottom row is the reversed partition $\lambda$ and we add trailing zeros to $\lambda$ if necessary to have $n$ parts in $\lambda$. The bijection between Gelfand-Tsetlin patterns and semistandard Young tableaux is simple and works as follows: the shape of the entries less than or equal to $i$ in the semistandard Young tableau is the $i$-th row of the Gelfand-Tsetlin pattern in reverse order. 

Gelfand-Tsetlin patterns with strictly increasing rows are called \emph{monotone triangles}. Monotone triangles with 
bottom row $1,2,\ldots,n$ are in easy bijective correspondence with $n \times n$ alternating sign matrices, see for instance \cite{Bre99}.
To sketch the bijection, we note that the numbers in the $i$-th row of the monotone triangle
are just the positions of the $1$'s in the $01$-string we obtain when adding the first $i$ rows of the ASM. 

We introduce a certain polynomial weight on monotone triangles from \cite{nASMDPP}. We call an entry of a monotone triangle
\begin{itemize}
\item \emph{left-leaning} if the entry is equal to its $\swarrow$-neighbour,
\item \emph{right-leaning} if the entry is equal to its $\searrow$-neighbour,
\item \emph{special} if it is neither left nor right leaning.
\end{itemize}
Denote the number of such entries in row $i$ of a given monotone triangle $M$ by $l_i(M)$, $r_i(M)$ and $s_i(M)$ respectively, where we set $l_0(M)=r_0(M)=s_0(M)=0$. Furthermore, we denote 
\begin{equation*}
    d_i(M)=(\textrm{sum of entries in row }i) - (\textrm{sum of entries in row }i-1)+r_{i-1}(M)-l_{i-1}(M).
\end{equation*}
We assign the following polynomial weight to a given monotone triangle $M$. 
\begin{equation}\label{eq: polynomial weight of MTs}
    \prod_{i=1}^nu^{r_{i-1} (M)}v^{l_{i-1}(M)} (w+ux_i+vx_i^{-1})^{s_{i-1}(M)} x_i^{d_i(M)}
\end{equation}
Note the close similarity to the weights in \eqref{eq: polynomial weight of GTs}.

Next, we define certain decorated monotone triangles. The definition first appeared in \cite{nASMDPP}, and it is not hard to see that these objects with a given bottom row have the same generating function as monotone triangles with the same given bottom row and weight given by \eqref{eq: polynomial weight of MTs}.

\begin{dfn}\label{DefinitionDAMTUndModifiedRobbins}
A \emph{down arrowed monotone triangle (DAMT)} is a monotone triangle where each entry not in the bottom row carries a decoration from $\{\swarrow, \downarrow, \searrow\}$ such that the following two conditions are satisfied.
\begin{itemize}
\item If the entry is left-leaning, then it must carry $\swarrow$.
\item If the entry is right-leaning, then it must carry $\searrow$.
\end{itemize}
We assign the following weight to a given DAMT.
\begin{equation}
u^{\#\searrow}v^{\#\swarrow}w^{\#\downarrow}\prod_{i=1}^{n}x_{i}^{(\textrm{sum of entries in row }i)-(\textrm{sum of entries in row }i-1)+(\# \searrow \textrm{ in row i-1})-(\# \swarrow \textrm{ in row i-1})}
\end{equation}
The \emph{modified Robbins polynomial} $R^{*}_{(k_{1},\dots,k_{n})}(x_{1},\dots,x_{n};u,v,w)$ is the generating function 
of DAMTs with bottom row $k_1,\ldots,k_n$ with respect to the weight just defined, see \cite[Theorem~3.1]{nASMDPP}.
\end{dfn}

\begin{exm}
In the following, we display a monotone triangle with polynomial weight 
$u^3(w+ux_3+vx_3^{-1})(w+ux_4+vx_4^{-1})^2x_1^5x_2^5x_3^5x_4^5$ on the left, and a down arrowed monotone triangle on the right with weight $u^3vw^2x_1^5x_2^5x_3^5x_4^4$. 
\begin{center}
    \begin{tikzpicture}[scale=0.6]
    \node at (1,1) {$2$};
    \node at (3,1) {$3$}; 
    \node at (5,1) {$5$};
    \node at (7,1) {$7$};
    \node at (2,2) {$3$};
    \node at (4,2) {$4$};
    \node at (6,2) {$6$};
    \node at (3,3) {$4$};
    \node at (5,3) {$5$};
    \node at (4,4) {$5$};
       \end{tikzpicture}\hspace{3cm} 
        \begin{tikzpicture}[scale=0.6]
    \node at (1,1) {$2$};
    \node at (3,1) {$3$};
    \node at (5,1) {$5$};
    \node at (7,1) {$7$};
    \node at (2,2) {$3$};
    \node at (4,2) {$4$};
    \node at (6,2) {$6$};
    \node at (3,3) {$4$};
    \node at (5,3) {$5$};
    \node at (4,4) {$5$};

    \node at (2.5,1.45) {\footnotesize $\searrow$};
    \node at (4.5,3.45) {\footnotesize $\searrow$};
    \node at (3.5,2.45) {\footnotesize $\searrow$};
    \node at (5.5,1.45) {\footnotesize $\swarrow$};

    \node at (5,2.4) {\footnotesize $\downarrow$};
    \node at (4,1.4) {\footnotesize $\downarrow$};

    \end{tikzpicture}
\end{center}
\end{exm}

The modified Robbins polynomials generalize the Schur polynomials as
\begin{equation}
\label{schur}  
R^{*}_{(k_{1},\dots,k_{n})}(x_{1},\dots,x_{n};0,0,1) = \prod_{i=1}^n x_i^{n-1} s_{(k_n-2n+2,k_{n-1}-2n+4,\ldots,k_1)}(x_1,\ldots,x_n).
\end{equation} 
Indeed, when setting $(u,v,w)=(0,0,1)$, we can assume that all entries are decorated with $\downarrow$, and thus all $\nearrow$-diagonals as well as all $\searrow$-diagonals are strictly increasing. Subtracting $i-1$ from the $i$-th $\searrow$-diagonal from the left for all $i$ and then subtracting $i-1$ from the $i$-th $\nearrow$-diagonal from the left for all $i$ establishes a bijection with Gelfand-Tsetlin patterns with bottom row $k_1,k_2-2,\ldots,k_n-2n+2$, and an analysis of the change of weight establishes the identity. 

Next we see that a certain evaluation of the modified Robbins polynomials gives the number of $n \times n$ alternating sign matrices.

\begin{prop}\label{Different Evaluations and alternating sign matrices} 
The evaluation $R^{*}_{(k_{1},\ldots,k_{n})}(1,\dots,1;1,1,-1)$ is the number of monotone triangles with bottom 
row $k_1,\ldots,k_n$. In particular, $R^{*}_{(1,2,\ldots,n)}(1,\ldots,1;1,1,-1)$ is the number of $n \times n$ alternating sign matrices.
\end{prop}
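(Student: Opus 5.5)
Evaluating the DAMT weight at $x_1=\dots=x_n=1$ and $(u,v,w)=(1,1,-1)$ turns each DAMT into the monomial $(-1)^{\#\downarrow}$; the $x_i$-exponents play no role. So $R^{*}_{(k_1,\dots,k_n)}(1,\dots,1;1,1,-1)$ is the signed count $\sum_{D} (-1)^{\#\downarrow(D)}$ over DAMTs $D$ with bottom row $k_1,\dots,k_n$. I would group the DAMTs according to their underlying monotone triangle $M$ and evaluate the inner sum over decorations of a fixed $M$.

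For fixed $M$, the decorations are chosen independently entry by entry among the entries not in the bottom row. A left-leaning entry is forced to carry $\swarrow$ and a right-leaning entry is forced to carry $\searrow$; either contributes the factor $1$ after specialization. A special entry may carry any of $\swarrow,\downarrow,\searrow$, contributing $v+w+u=1+(-1)+1=1$. One point needs care: an entry could in principle be both left- and right-leaning, i.e.\ equal to both its $\swarrow$- and $\searrow$-neighbours. This cannot happen in a monotone triangle, since those two neighbours are consecutive entries of the row below, which is strictly increasing. So the two constraints in Definition~\ref{DefinitionDAMTUndModifiedRobbins} never conflict, and each non-special entry carries exactly one admissible decoration. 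Hence the inner sum equals $\prod_{\text{entries}} 1 = 1$.

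Summing over $M$ then gives exactly the number of monotone triangles with bottom row $k_1,\dots,k_n$. Equivalently, one can read this off the monotone-triangle weight \eqref{eq: polynomial weight of MTs}: at this specialization every factor $u^{r_{i-1}}v^{l_{i-1}}(w+ux_i+vx_i^{-1})^{s_{i-1}}x_i^{d_i}$ equals $1$, using that $R^*$ is also the generating function of monotone triangles with that weight, as stated after \eqref{eq: polynomial weight of MTs}.

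For the second claim, take bottom row $(1,2,\dots,n)$ and apply the standard bijection, recalled above, between monotone triangles with this bottom row and $n\times n$ alternating sign matrices. Row $i$ records the positions of the $1$'s in the partial column sums of the first $i$ rows of the matrix. There is no real obstacle here; the only step needing attention is the well-definedness of the forced decorations, which is handled by strict monotonicity of the rows.
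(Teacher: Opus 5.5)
Your proposal is correct and follows the paper's proof. You fix the underlying monotone triangle, observe that a left- or right-leaning entry has a single forced decoration contributing $v=1$ or $u=1$, and note that a special entry contributes $u+v+w=1$, so each monotone triangle is counted exactly once. Your added remark that no entry can be both left- and right-leaning, because the row below is strictly increasing, is a detail the paper leaves implicit.
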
 

\begin{proof} 
Given a monotone triangle with bottom row $k_1,\ldots,k_n$, we determine all possible decorations. If the entry is left-leaning or right-leaning, then there is only one possible decoration. In this case, the contribution of the decoration to the weight is $u=1$ or $v=1$ under the specialization we consider. For all other entries, all three decorations are possible, and the total contribution of these decorations to the weight is also $u+v+w=1+1+(-1)=1$.
\end{proof} 

In \cite{nASMDPP}, the following bialternant formula for the modified Robbins polynomials was established using a recursive approach. It is a generalization of the bialternant formula for Schur polynomials.

\begin{thm} 
For any $n \ge 1$ and any strictly increasing sequence $k_1,\ldots,k_n$, we have 
\begin{equation}\label{eq:Antisymm for mod Robbins}
    R^{*}_{(k_{1},\dots,k_{n})}(x_{1},\dots,x_{n};u,v,w)=\frac{\asym_{x_{1},\dots,x_{n}}\left[\prod_{1\le i<j\le n}(u x_i x_j + v   + w x_i)\prod_{i=1}^{n}x_{i}^{k_{i}}\right]}{\prod_{1\le i<j\le n}(x_{j}-x_{i})}, 
\end{equation}
where we denote by $\asym_{x_1,\ldots,x_n} F(x_{1},\dots,x_{n})\Def\sum\limits_{\sigma \in {\mathfrak S}_n}\sgn(\sigma) F(x_{\sigma(1)},\dots,x_{\sigma(n)})$ the antisymmetrizer.
\end{thm}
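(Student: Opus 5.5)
We prove the bialternant formula \eqref{eq:Antisymm for mod Robbins} by induction on $n$, peeling off the bottom row of a DAMT. Write $A_{(k_1,\dots,k_n)}$ for the right-hand side. For $n=1$ both sides equal $x_1^{k_1}$: there is a single entry, and it is undecorated because it lies in the bottom row.

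For the induction step, a DAMT with bottom row $k_1<\dots<k_n$ consists of three pieces:
\begin{itemize}
\item a DAMT with some bottom row $l_1<\dots<l_{n-1}$ that interlaces with $k$, meaning $k_i\le l_i\le k_{i+1}$;
\item decorations on row $n-1$;
\item the factor $x_n^{\sum k_i-\sum l_i+\#\searrow-\#\swarrow}$, where the arrows are counted in row $n-1$.
\end{itemize}
An entry $l_i$ strictly between $k_i$ and $k_{i+1}$ is free and contributes $ux_n+w+vx_n^{-1}$. If $l_i=k_i$ it contributes $vx_n^{-1}$, and if $l_i=k_{i+1}$ it contributes $ux_n$. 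This yields the recursion
\begin{multline*}
R^*_{(k_1,\dots,k_n)}(x_1,\dots,x_n)=x_n^{\sum k_i}\sum_{l}\prod_{i=1}^{n-1}\bigl(\text{local weight of } l_i\bigr)\\
\times x_n^{-\sum l_i}\,R^*_{(l_1,\dots,l_{n-1})}(x_1,\dots,x_{n-1}),
\end{multline*}
where the sum runs over strictly increasing $l$ interlacing $k$. The main difficulty is the strictness of $l$: when $k_{i+1}=k_i+1$ the two choices $l_i=k_{i+1}$ and $l_{i+1}=k_{i+1}$ collide. I would handle this by extending $R^*$ and the bialternant $A$ to arbitrary integer sequences, since the antisymmetrizer formula makes sense for any $k$. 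With this extension the sum can be taken over the product of intervals, and colliding terms either cancel or are absorbed.

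The core step is then to show that $A$ satisfies the same recursion, now summed over the full box $\prod_i[k_i,k_{i+1}]$. After substituting the induction hypothesis for $R^*_{(l)}$, the sum over each $l_i$ is geometric. Summing it in the variable $y=x_j/x_n$ inside the antisymmetrizer over $x_1,\dots,x_{n-1}$ produces boundary terms at $l_i=k_i$ and $l_i=k_{i+1}$, each carrying factors of the form
\[
\frac{ux_jx_n+v+wx_j}{x_n-x_j}.
\]
This matches the factor $\prod_{j<n}(ux_jx_n+v+wx_j)$ that the $n$-th variable contributes to the product in $A$.

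Finally, expand $A_{(k)}$ by singling out which variable carries the exponent $k_n$, in the spirit of a Laplace expansion, and compare with the telescoped sum. The telescoping argument is exactly the proof of the Schur branching rule $s_\lambda(x_1,\dots,x_n)=\sum_{\mu\prec\lambda}x_n^{|\lambda|-|\mu|}s_\mu(x_1,\dots,x_{n-1})$ via bialternants, deformed by the weights $u,v,w$.

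I expect the main obstacle to be the bookkeeping that makes the boundary terms telescope and cancel, namely checking that the extended $A$ obeys the correct relations for non-strict sequences. The most direct route is to verify that $A_{(\dots,k_i,k_i,\dots)}$, and the relevant shifted combination such as $A_{(\dots,k,k+1,\dots)}$ with its swapped counterpart, satisfy the linear relation forced by the factor $ux_ix_j+v+wx_i$. A simpler alternative to the full telescoping is to check that both sides are determined by the $n=1$ case together with the one-step recursion, and to verify the recursion for $A$ on generating functions. Concretely, multiply by $\prod_i z_i^{-k_i}$ and sum over $k$, which turns both sides into rational identities in $z$ and $x$.
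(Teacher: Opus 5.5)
The paper does not prove this theorem; it quotes it from \cite{nASMDPP}, where it is obtained by a recursion in $n$. Your plan is of that type: remove the bottom row, sum geometrically, and compare with the expansion of $A_{(k)}$ by the position of $x_n$. It has a genuine gap, though, exactly at the step you defer.

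\textbf{The step you call the core step is false.} The extended bialternant does not vanish on sequences with a repeated entry; already $A_{(m,m)}(x_1,x_2)=-w(x_1x_2)^m$. So the full-box sum with the DAMT weights exceeds the strict sum by the collision terms $l_i=l_{i+1}=k_{i+1}$, which carry weight $ux_n\cdot vx_n^{-1}=uv$. For $k=(0,1,2)$ the box sum equals $A_{(0,1,2)}-uvw\,x_1x_2x_3$. Hence $A$ does not satisfy the recursion over the full box, and the colliding terms do not cancel.

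\textbf{Your description of the boundary terms is also off.} Put $y=x_j$, $X=x_n$, with $\mathrm{wt}(k_i)=vX^{-1}$, $\mathrm{wt}(k_{i+1})=uX$, and $\mathrm{wt}(l)=uX+w+vX^{-1}$ otherwise. Then
\[
\sum_{l=k_i}^{k_{i+1}}\mathrm{wt}(l)\Bigl(\frac{y}{X}\Bigr)^{l}=\frac{(y/X)^{k_i}(uyX+v+wy)-(y/X)^{k_{i+1}}(uyX+v+wX)}{X-y},
\]
so the right endpoint carries $wX$, not $wy$. Unlike in the Schur branching proof, consider the terms where position $i$ takes its right endpoint and position $i+1$ its left endpoint. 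These give two variables $x_a,x_b$ the same exponent $k_{i+1}$, but the antisymmetrizer does not kill them: the factor $(ux_ax_b+v+wx_a)(ux_aX+v+wX)(ux_bX+v+wx_b)$ is not symmetric in $x_a,x_b$.

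\textbf{The missing idea is that these two defects cancel each other.} Write the strict sum as the box sum minus the collision terms. Distinct collisions can never overlap, because $k$ is strict. Pair each collision term at $(i,i+1)$ with the corresponding right-then-left boundary term. After multiplying by the pair factor, their combined contribution is
\[
-\frac{(x_ax_b/X^2)^{k_{i+1}}}{(X-x_a)(X-x_b)}\,(ux_ax_b+v+wx_a)\bigl[(ux_aX+v+wX)(ux_bX+v+wx_b)+uv(X-x_a)(X-x_b)\bigr].
\]
The bracket factors as $(uX^2+wX+v)(ux_ax_b+v+wx_b)$. So this contribution is symmetric in $x_a,x_b$, as are all other factors involving positions $i,i+1$, and the antisymmetrizer annihilates it.

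Pairing at the leftmost such location gives an involution. The survivors are the patterns in which positions $1,\dots,j-1$ take their left endpoint and positions $j,\dots,n-1$ their right endpoint. These reproduce exactly the terms of $A_{(k)}$ in which $x_n$ sits in position $j$: the factors are $ux_px_n+v+wx_p$ before it and $ux_nx_p+v+wx_n$ after it, and the sign $(-1)^{n-j}$ matches.

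With this identity your outline becomes a complete proof. Without it, ``colliding terms either cancel or are absorbed'' is precisely the unproved content.
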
 

Note that the modified Robbins polynomial can also be written in terms of the symmetrizer as follows.
$$
R^{*}_{(k_{1},\dots,k_{n})}(x_{1},\dots,x_{n};u,v,w)= \Symm{x}{n} \left[\prod_{1\le i<j\le n} \frac{u x_i x_j + v   + w x_i}{x_j-x_i} \prod_{i=1}^{n}x_{i}^{k_{i}}\right]
$$

\begin{rmk}\label{Verbindung Robbins and modified Robbins}
Let us note that the \emph{ordinary (non-modified) Robbins polynomials} were also defined in \cite{nASMDPP} and they are given by 
\begin{multline}\label{Non Modified Robbins Polynomial Definition}
    R_{(k_{1},\dots,k_{n})}(x_{1},\dots,x_{n};t,u,v,w) \\\Def \frac{\asym_{x_1,\ldots,x_n} \left[ \prod_{1 \le i \le j \le n}  (tx_{j}+u x_i x_j + v   + w x_i) \prod_{i=1}^{n} x_i^{k_i-1} \right]}
{\prod_{1 \le i < j \le n} (x_j-x_i)}.
\end{multline}
The three differences are as follows: (i) in the product over all $(i,j)$ with 
$1 \le i \le j \le n$, we also admit $i=j$, (ii) we have the additional parameter $t$ and (iii) the exponents of the $x_i$'s are shifted by minus $1$. Robbins polynomials are generating functions of arrowed Gelfand-Tsetlin patterns, which are decorated Gelfand-Tsetlin patterns where decorations are arrows that are 
pointing up. In this case, one can extend the definition to integer sequences $k_1,\ldots,k_n$ that are not necessarily 
strictly increasing at the cost of involving a sign.  

Note that upon setting $s_j=s$ for all $j\ge 0$ and then substituting $u_i=\frac{x_i+s}{1+s x_i}$ in \eqref{eq:def of F lambda} we obtain the following relation between the fully inhomogeneous spin Hall-Littlewood symmetric rational functions and the ordinary Robbins polynomials.
\begin{multline}
    R_{(\lambda_n+1,\dots,\lambda_1+1)}(x_1,\dots,x_n;1-s^2q,s(1-q),s(1-q),s^2-q) =\frac{(1-s^2)^{\binom{n+1}{2}}}{(1-q)^n}\\\left.\times \prod_{i=1}^n\frac{(1-s^2q)x_{i}+s(1-q) (x_i^2+1)  + (s^2-q) x_i}{1+sx_i} \F_\lambda(u_1,\dots,u_n|s,s,\ldots)\right|_{u_i=\frac{x_i+s}{1+s x_i}}
\end{multline}
This would also allow us to specialize the new Littlewood identities for the fully inhomogeneous spin Hall-Littlewood polynomials from Theorem \ref{main1} and Theorem \ref{main2} into Littlewood identities for ordinary Robbins polynomials with $u=v$ too.

Also note that the Robbins polynomials generalize the \emph{Hall-Littlewood polynomials}, as shown by the following relations.
\begin{equation}
    P_{\lambda}(x_{1},\dots,x_{n};t)=  (-1)^{\binom{n}{2}} \prod_{r\ge 0} \frac{1}{(t;t)_{m_r(\lambda)}} R_{\lambda}(x_{1},\dots,x_{n};-t,0,0,1)
\end{equation}
\begin{equation}
    P_{\lambda}(x_{1},\dots,x_{n};t)=\prod_{r\ge 0} \frac{1}{(t;t)_{m_r(\lambda)}} R_{(\lambda_n,\dots,\lambda_1)}(x_{1},\dots,x_{n};1,0,0,-t)
\end{equation}
\end{rmk}

\subsection{Modified Robbins polynomials are special cases of fully inhomogeneous spin-Hall Littlewood symmetric rational functions combinatorially}\label{Subsec: Connection to Robbins polynomials}


We prove Lemma \ref{lem: Connection Robbins sHL} bijectively for a partition with strictly decreasing parts $\lambda=(\lambda_1>\dots >\lambda_n\ge 0)$
and develop some combinatorial understanding of this connection between the $\mathfrak{sl}_2$~higher spin six vertex model and DAMTs.

For the bijective proof of \eqref{eq: relation sHL and mod Robbins},
we first note that $w_{u,-q^{-1/2}}(2,1;0,1)$ vanishes, see Figure~\ref{fig:higher spin six vertex model weigths}. Hence, no north step can be contained in precisely two paths, since the top boundary condition prescribes $n$ different north steps for the $n$ paths, and so the paths would have to separate again, and hence a configuration of the form $(2,1;0,1)$ would have to appear.

Furthermore, no north step can be contained in more than two paths, because otherwise there would also have to be a configuration of the type $(2,1;0,1)$ appearing in the ensemble, as the fact that vertical steps must not intersect implies that from each horizontal line to the next, only one path can separate from a set of paths that previously shared a vertical step. 

Therefore, we are left with the table in Figure~\ref{fig:weights if s is minus root of q} of the five configurations to which we can restrict without loss of generality and their weights expressed in the generic variable of the $\mathfrak{sl}_2$~higher spin six vertex model $u$ in the second row and in the generic DAMT variable $x=\frac{u+q^{-1/2}}{1+q^{-1/2}u}$ in the third row, where we set the DAMT parameter $v=q^{1/2}-q^{-1/2}$. Note that here $u$ does not have the same meaning as in the definition of the modified Robbins polynomials, whereas $v$ corresponds in the end to the $v$ in the definition of the Robbins polynomials. 

\begin{figure}[h]
    \centering
    \begin{tabular}{c||c|c|c|c|c}
    \begin{tikzpicture}[scale=0.35]
             \path [draw=black!40, line width=3pt] (-1,0)--(1,0) (0,-1)--(0,1);
             \filldraw[black] (0,0) circle (2pt);
             \node at (0,-1.5) {\Small$i_1$};
             \node at (0,1.5) {\Small$i_2$};
             \node at (-1.5,0) {\Small$j_1$};
             \node at (1.5,0) {\Small$j_2$};
         \end{tikzpicture}&
         \begin{tikzpicture}[scale=0.7]
         \path [draw, dotted] (-1,0)--(1,0) (0,-1)--(0,1);
             \path [draw, line width=2pt] (0,-1)--(0,-0.2) (0,0.2)--(0,1);
             \filldraw[red] (0,0) circle (3pt);
         \end{tikzpicture}& \begin{tikzpicture}[scale=0.7]
         \path [draw, dotted] (-1,0)--(1,0) (0,-1)--(0,1);
             \path [draw, line width=2pt] (-1,0)--(-0.2,0) (0.2,0)--(1,0);
             \filldraw[red] (0,0) circle (3pt);
         \end{tikzpicture}&\begin{tikzpicture}[scale=0.7]
         \path [draw, dotted] (-1,0)--(1,0) (0,-1)--(0,1);
             \path [draw, line width=2pt] (0,-1)--(0,-0.2) (0,0.2)--(0,1) (-1,0)--(-0.2,0) (0.2,0)--(1,0);
             \filldraw[red] (0,0) circle (3pt);
         \end{tikzpicture}&\begin{tikzpicture}[scale=0.7]
         \path [draw, dotted] (-1,0)--(1,0) (0,-1)--(0,1);
             \path [draw, line width=2pt] (0,-1)--(0,-0.2) (0.2,0)--(1,0);
             \filldraw[red] (0,0) circle (3pt);
         \end{tikzpicture}&\begin{tikzpicture}[scale=0.7]
         \path [draw, dotted] (-1,0)--(1,0) (0,-1)--(0,1);
             \path [draw, line width=2pt] (-1,0)--(-0.2,0) (0,0.2)--(0,1);
             \filldraw[red] (0,0) circle (3pt);
         \end{tikzpicture} \\
         \hline
         \Small$w_{u,-q^{-1/2}}(i_1,i_2;j_1,j_2)$& $\frac{1+q^{1/2}u}{1+q^{-1/2}u}$&$\frac{u+q^{-1/2}}{1+q^{-1/2}u}$ &$\frac{u+q^{1/2}}{1+q^{-1/2}u}$&$\frac{(1-q^{-1})u}{1+q^{-1/2}u}$&$\frac{1-q}{1+q^{-1/2}u}$\\\hline \Small$w_{x,-q^{-1/2}}(i_1,i_2;j_1,j_2)$& $\frac{vx}{1-q^{-1}}$&$x$ &$\frac{v}{1-q^{-1}}$&$\frac{-q^{-1}v+x(1-q^{-1})}{1-q^{-1}}$&$\frac{1-q+v x}{1-q^{-1}}$
    \end{tabular}
    \caption{All possible configurations and their weights in the case $s_j=-q^{-1/2}$.}
    \label{fig:weights if s is minus root of q}
\end{figure}

The combinatorial model for the right-hand side of \eqref{eq: relation sHL and mod Robbins} is thus up to a normalization factor given by ensembles of non-crossing paths that must not intersect along horizontal and vertical 
edges in the the higher spin six vertex model, where at each vertex $(j,i)$, for $(j,i)\in \mathbb{Z}_{\ge 0}\times \{1,\dots,n\}$, the vertex weight is given by $w_{x_i,-q^{-1/2}}$ as in the last row of Figure \ref{fig:weights if s is minus root of q}.



The bijection between such ensembles and monotone triangles is as follows: the $x$-coordinate of the up-step going from the 
horizontal line at height $i$ to the horizontal line at height $i+1$ of the $j$-th path, counted from left to right, is the $j$-th entry in row $i$ of the monotone triangle. See Figure \ref{fig:example underlying bijection}, for an example where $\lambda=(8,7,4,3,1,0)$. 
However, as such, the bijection is not yet weight-preserving, as we still need to take the normalization factor 
$$
(1-q^{-1})^{\binom{n}{2}}\prod_{i=1}^n\frac{1-q^{-1}}{1-q+vx_i} 
$$
on the right-hand side of \eqref{eq: relation sHL and mod Robbins} into account.

\begin{figure}[h]
    \centering
    \begin{tikzpicture}
        \draw[step=1cm,gray,very thin] (0,1) grid (8,6);
        \path [draw, line width=2pt, rounded corners] (-1,1)--(2,1)--(2,1.8)--(2.2,2)--(6,2)--(6,3)--(7,3)--(7,4)--(8,4)--(8,7) (-1,2)--(1.8,2)--(2,2.2)--(2,2.8)--(2.2,3)--(4,3)--(4,3.8)--(4.2,4)--(6,4)--(6,5)--(7,5)--(7,7) (-1,3)--(1.8,3)--(2,3.2)--(2,3.8)--(2.2,4)--(3.8,4)--(4,4.2)--(4,7) (-1,4)--(1.8,4)--(2,4.2)--(2,5)--(3,5)--(3,7) (-1,5)--(1,5)--(1,7) (-1,6)--(0,6)--(0,7);
        \node at (0.3,6.5) {\Small$\lambda_6$};
        \node at (1.3,6.5) {\Small$\lambda_5$};
        \node at (3.3,6.5) {\Small$\lambda_4$};
        \node at (4.3,6.5) {\Small$\lambda_3$};
        \node at (7.3,6.5) {\Small$\lambda_2$};
        \node at (8.3,6.5) {\Small$\lambda_1$};

        \node at (1.3,5.5) {\Small$a_{1,5}$};
        \node at (3.3,5.5) {\Small$a_{2,5}$};
        \node at (4.3,5.5) {\Small$a_{3,5}$};
        \node at (7.3,5.5) {\Small$a_{4,5}$};
        \node at (8.3,5.5) {\Small$a_{5,5}$};

        \node at (2.3,4.5) {\Small$a_{1,4}$};
        \node at (4.3,4.5) {\Small$a_{2,4}$};
        \node at (6.3,4.5) {\Small$a_{3,4}$};
        \node at (8.3,4.5) {\Small$a_{4,4}$};

        \node at (2.3,3.5) {\Small$a_{1,3}$};
        \node at (4.3,3.5) {\Small$a_{2,3}$};
        \node at (7.3,3.5) {\Small$a_{3,3}$};

        \node at (2.3,2.5) {\Small$a_{1,2}$};
        \node at (6.3,2.5) {\Small$a_{2,2}$};

        \node at (2.3,1.5) {\Small$a_{1,1}$};

        \node at (-1.5,1) {$1$};
        \node at (-1.5,2) {$2$};
        \node at (-1.5,3) {$3$};
        \node at (-1.5,4) {$4$};
        \node at (-1.5,5) {$5$};
        \node at (-1.5,6) {$6$};

        \node at (1,0.5) {$1$};
        \node at (2,0.5) {$2$};
        \node at (3,0.5) {$3$};
        \node at (4,0.5) {$4$};
        \node at (5,0.5) {$5$};
        \node at (6,0.5) {$6$};
        \node at (7,0.5) {$7$};
        \node at (8,0.5) {$8$};
        \node at (0,0.5) {$0$};
    \end{tikzpicture}\\
    \begin{tikzpicture}[scale=0.5]
    \node at (-1,-1) {$0$};
    \node at (1,-1) {$1$}; 
    \node at (3,-1) {$3$};
    \node at (5,-1) {$4$};
    \node at (7,-1) {$7$};
    \node at (9,-1) {$8$};
    
    \node at (0,0) {$1$};
    \node at (2,0) {$3$}; 
    \node at (4,0) {$4$};
    \node at (6,0) {$7$};
    \node at (8,0) {$8$};
    
    \node at (1,1) {$2$};
    \node at (3,1) {$4$}; 
    \node at (5,1) {$6$};
    \node at (7,1) {$8$};
    
    \node at (2,2) {$2$};
    \node at (4,2) {$4$};
    \node at (6,2) {$7$};
    
    \node at (3,3) {$2$};
    \node at (5,3) {$6$};
    
    \node at (4,4) {$2$};
    \end{tikzpicture}
    \caption{An ensemble of paths and the corresponding monotone triangle}
    \label{fig:example underlying bijection}
\end{figure}
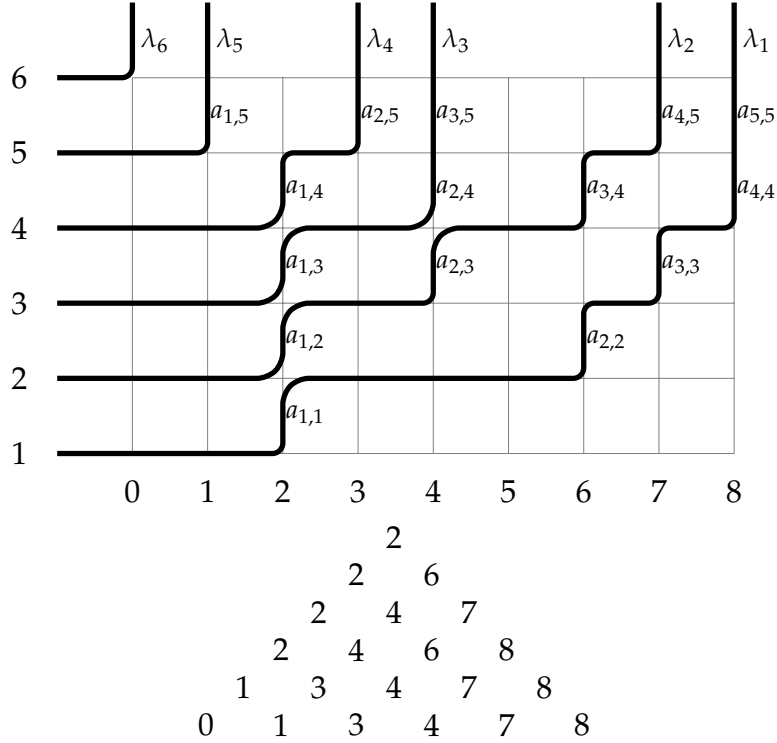

This normalization factor can be interpreted as follows. Note that there is at least one configuration of the form $(0,1;1,0)$ per row. This is because there is one more path exiting each row through a vertical step than entering through a vertical step, due to the left boundary condition. Therefore, not all exits can be of the form $(1,1;0,0)$ or $(1,1;1,1)$ and so at least one has to be of the form $(0,1;1,0)$. The product $\prod_{i=1}^n\frac{1-q^{-1}}{1-q+vx_i}$ can thus be interpreted as cancelling one of the configurations $(0,1;1,0)$ per row---say, without loss of generality, the leftmost such configuration. Further, since there are $i$ paths entering row $i+1$, $i=1,2,\ldots,n$, through a vertical step, there are $\binom{n}{2}$ configurations with $i_1=1$ overall. Their denominators of the weight are taken into account by the prefactor $(1-q^{-1})^{\binom{n}{2}}$. In summary, we can interpret the right-hand side as the generating function of the 
ensemble of paths with the following normalized weights as provided in 
Figure~\ref{fig:weights if s is minus root of q normalized}, with the exception that the 
leftmost configuration of type $(0,1;1,0)$ has weight $1$.
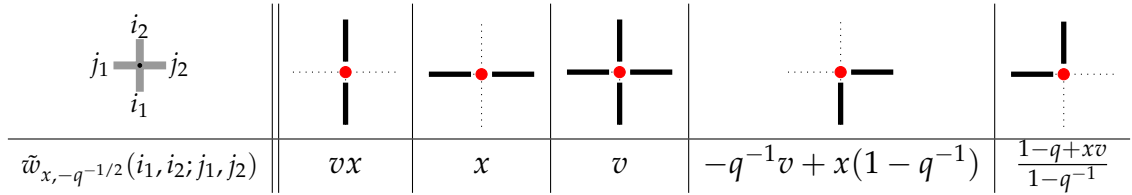
\begin{figure}[h]
    \centering
    \begin{tabular}{c||c|c|c|c|c}
    \begin{tikzpicture}[scale=0.35]
             \path [draw=black!40, line width=3pt] (-1,0)--(1,0) (0,-1)--(0,1);
             \filldraw[black] (0,0) circle (2pt);
             \node at (0,-1.5) {\Small$i_1$};
             \node at (0,1.5) {\Small$i_2$};
             \node at (-1.5,0) {\Small$j_1$};
             \node at (1.5,0) {\Small$j_2$};
         \end{tikzpicture}&
         \begin{tikzpicture}[scale=0.7]
         \path [draw, dotted] (-1,0)--(1,0) (0,-1)--(0,1);
             \path [draw, line width=2pt] (0,-1)--(0,-0.2) (0,0.2)--(0,1);
             \filldraw[red] (0,0) circle (3pt);
         \end{tikzpicture}& \begin{tikzpicture}[scale=0.7]
         \path [draw, dotted] (-1,0)--(1,0) (0,-1)--(0,1);
             \path [draw, line width=2pt] (-1,0)--(-0.2,0) (0.2,0)--(1,0);
             \filldraw[red] (0,0) circle (3pt);
         \end{tikzpicture}&\begin{tikzpicture}[scale=0.7]
         \path [draw, dotted] (-1,0)--(1,0) (0,-1)--(0,1);
             \path [draw, line width=2pt] (0,-1)--(0,-0.2) (0,0.2)--(0,1) (-1,0)--(-0.2,0) (0.2,0)--(1,0);
             \filldraw[red] (0,0) circle (3pt);
         \end{tikzpicture}&\begin{tikzpicture}[scale=0.7]
         \path [draw, dotted] (-1,0)--(1,0) (0,-1)--(0,1);
             \path [draw, line width=2pt] (0,-1)--(0,-0.2) (0.2,0)--(1,0);
             \filldraw[red] (0,0) circle (3pt);
         \end{tikzpicture}&\begin{tikzpicture}[scale=0.7]
         \path [draw, dotted] (-1,0)--(1,0) (0,-1)--(0,1);
             \path [draw, line width=2pt] (-1,0)--(-0.2,0) (0,0.2)--(0,1);
             \filldraw[red] (0,0) circle (3pt);
         \end{tikzpicture} \\
         \hline
         \Small$\tilde{w}_{x,-q^{-1/2}}(i_1,i_2;j_1,j_2)$& $vx$&$x$ &$v$&$-q^{-1}v+x(1-q^{-1})$&$\frac{1-q+xv}{1-q^{-1}}$
    \end{tabular}
    \caption{The normalized weights $\tilde{w}_{x,-q^{-1/2}}$}
    \label{fig:weights if s is minus root of q normalized}
\end{figure}


With these modified weights, we can argue that the bijection is weight-preserving between 
the ensemble of paths and monotone triangles with bottom row $\lambda_n,\dots,\lambda_1$, if we equip this set with the polynomial weight introduced in \eqref{eq: polynomial weight of MTs}. 

Indeed, the configurations of the form $(1,1;0,0)$ in row $i$ correspond to right-leaning entries in row $i-1$ and the configurations of the form $(1,1;1,1)$ in row $i$ correspond to left-leaning entries in row $i-1$, and it can be checked that the weights are chosen appropriately. Moreover, configurations of the form $(1,0;0,1)$ and $(0,1;1,0)$ always appear in pairs in each row, with the exception of the leftmost configuration of the form $(0,1;1,0)$. This is because for any path that enters a horizontal line via a configuration of the form $(1,0;0,1)$, there is another path (in some cases the same) that leaves the same horizontal line via $(0,1;1,0)$ to the right. Indeed, consider a configuration of the form $(1,0;0,1)$, the only configurations that can come after it when traversing the horizontal line from left to right before encountering a configuration of the form $(0,1;1,0)$ are those of the form $(0,0;1,1)$ and of form $(1,1;1,1)$, where in the latter case the path under consideration changes.

The only path not entering via a vertex of the form $(1,0;0,1)$ but entering via the left boundary pairs with the leftmost exit of type $(0,1;1,0)$, which is taken into account by the fact that the leftmost configuration of type $(0,1;1,0)$ has weight $1$. 

Now the configurations of type $(1,0;0,1)$ in row $i$ correspond to special entries in row $i-1$, and their weights multiplied together with the weight of the corresponding configuration of type $(0,1;1,0)$ gives the polynomial weight $x(vx^{-1}+w+vx)$
recalling that $v=u=q^{1/ 2}-q^{-1/ 2}$ and $w=q^{-1}-q$, as desired.  

\begin{rmk}
Note that it is also possible to turn this into a weight-preserving bijection between DAMTs and the ensemble of paths by slightly modifying the paths once more and coloring them as follows. We use the observation from above about the pairing $(1,0;0,1)$'s and $(0,1;1,0)$'s. Now we replace the weights of the configurations of type $(0,1;1,0)$ by $1$ and color the configurations of type $(1,0;0,1)$ with red, blue or yellow, with the corresponding weights being $v$, $wx$ and $vx^2$, respectively. 
\end{rmk} 


\section{Recurrences for $\F_{\lambda}(u_1,\ldots,u_n | s_0,s_1,\ldots)$ and the left-hand sides of the identities}\label{Sec: recurrence relations}

In this section, we first derive a recurrence for $\F_{\lambda}(u_1,\ldots,u_n | s_0,s_1,\ldots)$ with respect to $n$ in Proposition~\ref{rec}. The recurrence will then be used to establish recurrences for 
the left-hand sides of the identities from Theorems~\ref{main1} and \ref{main2} as well as Corollary~\ref{main2:cor}, stated in Corollaries~\ref{rec1}, \ref{rec2}, and \ref{rec2v}, respectively.  These recurrences will be instrumental in Sections \ref{sec: proof of Thm main1} and \ref{sec: proof of Thm main2} to prove Theorems~\ref{main1} and \ref{main2}. 

We use the standard notation $[n] \overset{\textrm{def}}{=} \{1,\ldots,n\}$. Moreover, for a set $T$ and a non-negative integer $k$, denote by  $\left( T \atop k \right)$  the set of $k$-subsets of $T$, and, for a sequence $(u_i)_{1 \le i \le n}$, we let $u_T \overset{\textrm{def}}{=} (u_i)_{i \in T}$ 
if $T$ is a subset of $[n]$.

\begin{prop}
\label{rec}
Let $n \ge 1$, $\lambda$ be a partition of length $n$ allowing zero parts and $k \in \{0,\ldots,n-1\}$ be minimal such that $\lambda_n=\lambda_{n-1}=\ldots,\lambda_{k+1}$. Then 
\begin{multline}\label{eq: recurrence for F lambda}
\F_{\lambda}(u_1,\ldots,u_n | s_0,s_1,\ldots) = (q;q)_{n-k} \prod_{i=1}^n \left( \frac{1}{1-s_l u_i} \prod_{j=0}^{l-1} \frac{u_i-s_j}{1-s_j u_i} \right) \\\times  \sum_{T \in \left( [n] \atop k \right)} \prod_{i \in T} (u_i-s_l) \prod_{i \in T, j \in T^c} \frac{u_i- q u_j}{u_i - u_j} \F_{\lambda^k-l-1}(u_T | s_{l+1},s_{l+2},\ldots), 
\end{multline} 
where $l \overset{\textrm{def}}{=} \lambda_n$, $\lambda^k-l-1 \overset{\textrm{def}}{=} (\lambda_1-l-1,\ldots,\lambda_k-l-1)$ and $T^c \overset{\textrm{def}}{=} [n] \setminus T$. 
\end{prop}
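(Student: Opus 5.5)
We work directly from the symmetrization formula \eqref{eq:def of F lambda}. Since $\lambda_{k+1}=\dots=\lambda_n=l$ and $\lambda_i\ge l+1$ for $i\le k$, every factor $\prod_{j=0}^{l-1}\frac{u_i-s_j}{1-s_ju_i}$ with its $(1-s_l u_i)^{-1}$ appears in each summand. Concretely, for $i\le k$ we split
\[
\frac{1}{1-s_{\lambda_i}u_i}\prod_{j=0}^{\lambda_i-1}\frac{u_i-s_j}{1-s_ju_i}
=\frac{1}{1-s_l u_i}\prod_{j=0}^{l-1}\frac{u_i-s_j}{1-s_ju_i}\cdot (u_i-s_l)\cdot\frac{1}{1-s_{\lambda_i}u_i}\prod_{j=l+1}^{\lambda_i-1}\frac{u_i-s_j}{1-s_ju_i}.
\]
The last block is exactly the corresponding factor for the partition $\lambda_i-l-1$ with shifted spins $s_{l+1},s_{l+2},\dots$. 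For $i>k$ the factor is simply $\frac{1}{1-s_lu_i}\prod_{j<l}\frac{u_i-s_j}{1-s_ju_i}$. The product $\prod_{i=1}^n\frac{1}{1-s_lu_i}\prod_{j<l}\frac{u_i-s_j}{1-s_ju_i}$ is symmetric in $u_1,\dots,u_n$, so it can be pulled out of $\Symm{u}{n}$.

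Next we organize the sum over $\sigma\in\mathfrak S_n$ by the set $T=\sigma(\{1,\dots,k\})$. We split $\prod_{i<j}\frac{u_{\sigma(i)}-qu_{\sigma(j)}}{u_{\sigma(i)}-u_{\sigma(j)}}$ into three pieces:
\begin{itemize}
\item pairs with both indices $\le k$;
\item pairs with $i\le k<j$, which give $\prod_{a\in T,b\in T^c}\frac{u_a-qu_b}{u_a-u_b}$ independently of how $\sigma$ acts within the blocks;
\item pairs with both indices $>k$.
\end{itemize}
The factor $(1-q)^n$ also splits as $(1-q)^k(1-q)^{n-k}$. Summing over the bijections $\{1,\dots,k\}\to T$ then reproduces $\F_{\lambda^k-l-1}(u_T|s_{l+1},\dots)$ via \eqref{eq:def of F lambda}, together with the factor $\prod_{i\in T}(u_i-s_l)$.

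Summing over the bijections $\{k+1,\dots,n\}\to T^c$ leaves
\[
(1-q)^{n-k}\,\Symm{u}{T^c}\prod_{\substack{a<b\\ a,b\in T^c}}\frac{u_a-qu_b}{u_a-u_b},
\]
with the order taken in the symmetrized variables. By the classical Hall–Littlewood identity
\[
\sum_{\sigma\in\mathfrak S_m}\prod_{i<j}\frac{x_{\sigma(i)}-qx_{\sigma(j)}}{x_{\sigma(i)}-x_{\sigma(j)}}=\frac{(q;q)_m}{(1-q)^m},
\]
this equals $(q;q)_{n-k}$. The identity is equivalent to $P_{(0^m)}=1$, i.e. the Poincaré polynomial identity $\sum_{w}q^{\ell(w)}=[m]_q!$.

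The proof is therefore bookkeeping plus this one identity. The only point needing care is the cross factor: we must check that the ordering "positions $\le k$ before positions $>k$" is what produces $\frac{u_a-qu_b}{u_a-u_b}$ with $a\in T$, $b\in T^c$, and not the reversed orientation. If one prefers not to cite the Hall–Littlewood normalization, it can be proved quickly. The left side is a symmetric rational function of degree $0$ whose poles cancel under symmetrization, so it is a constant. Evaluating at $x_i=t^{i}$ with $t\to\infty$ (or inductively in $m$) gives $(q;q)_m/(1-q)^m$.
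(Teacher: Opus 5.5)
Your proposal is correct and follows essentially the same route as the paper: split off the common symmetric factor $\prod_{i=1}^n\frac{1}{1-s_lu_i}\prod_{j<l}\frac{u_i-s_j}{1-s_ju_i}$ and the factors $(u_i-s_l)$ for $i\le k$, group the permutations by $T=\sigma(\{1,\dots,k\})$ (the cross factor does come out as $\frac{u_a-qu_b}{u_a-u_b}$ with $a\in T$, $b\in T^c$), and evaluate the remaining symmetrization over $T^c$ with the identity $\sum_{\sigma\in\mathfrak S_m}\prod_{i<j}\frac{x_{\sigma(i)}-qx_{\sigma(j)}}{x_{\sigma(i)}-x_{\sigma(j)}}=(q;q)_m/(1-q)^m$. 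The paper cites that identity from Macdonald, and your self-contained argument for it also works: the sum is a constant, and the limit $x_i=t^i$, $t\to\infty$, turns it into $\sum_\sigma q^{\mathrm{inv}(\sigma)}$.
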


\begin{proof}
Let  
$$
f_\lambda(u_1,\dots,u_n|s_0,s_1,\ldots) \overset{\textrm{def}}{=} \prod_{1\le i<j\le n}\frac{u_i-qu_j}{u_i-u_j}\prod_{i=1}^n\left(\frac{1-q}{1-s_{\lambda_i}u_i}\prod_{j=0}^{\lambda_i-1}\frac{u_i-s_j}{1-s_ju_i}\right), 
$$
be the argument in the symmetrizer in the definition of $\F_\lambda(u_1,\dots,u_n|s_0,s_1,\ldots)$.
It follows that 
\begin{multline*}
f_\lambda(u_1,\dots,u_n|s_0,s_1,\ldots) = 
    (1-q)^{n-k} \prod_{i=1}^k \left( (u_i-s_l) \prod_{j=k+1}^n\frac{u_i-qu_j}{u_i-u_j} \right) \prod_{k+1\le i<j\le n}\frac{u_i-qu_j}{u_i-u_j}\\\times{\prod_{i=1}^n \left(\frac{1}{1-s_lu_i}\prod_{j=0}^{l-1}\frac{u_i-s_j}{1-s_ju_i}\right)}f_{\lambda^k-l-1}(u_1,\dots,u_k|s_{l+1},s_{l+2},\dots).
\end{multline*}
This can be turned into the following recurrence for $\F_{\lambda}(u_1,\ldots,u_n|s_0,s_1,\ldots)$ by computing the symmetrizer by first permuting $u_1,\ldots,u_k$ and $u_{k+1},\ldots,u_n$, separately, and then shuffling $u_1,\ldots,u_k$ and $u_{k+1},\ldots,u_n$ in all possible ways. \begin{multline*} 
\F_{\lambda}(u_1,\ldots,u_n | s_0,s_1,\ldots) = \sum_{T \in \left( [n] \atop k \right)} (1-q)^{n-k} 
\prod_{i \in T, j \in T^c} \frac{u_i- q u_j}{u_i - u_j}
\sym{u_{T^c}} \left[  \prod_{i,j \in T^c\atop i<j} \frac{u_i - q u_j}{u_i-u_j} \right] \\ 
\times \prod_{i \in T} (u_i-s_l) 
\prod_{i=1}^n \left( \frac{1}{1-s_l u_i} \prod_{j=0}^{l-1} \frac{u_i-s_j}{1-s_j u_i} \right)
\F_{\lambda^k-l-1}(u_T | s_{l+1},s_{l+2},\ldots)
\end{multline*} 
The result then follows from the following identity for
$m \ge 0$  
$$
\Symm{u}{m} \left[ \prod_{1 \le i < j \le m} \frac{u_i - q u_j}{u_i-u_j} \right] = \frac{(q;q)_m}{(1-q)^m}, 
$$
which can for instance be found in \cite[p. 207, (1.4)]{Macdonald}.
\end{proof}

Proposition~\ref{rec} can be used in a straightforward manner to derive a recurrence for the left-hand side of the identity in Theorem~\ref{main1}. We denote this left-hand side by 
$$
H_1(u_1,\ldots,u_n|s_0,s_1,\ldots) \overset{\textrm{def}}{=} 
\sum_{\lambda_1 \ge \lambda_2 \ge \ldots \ge \lambda_n \ge 0} 
\prod_{r \ge 0} \frac{(-s_r;q)_{m_r(\lambda)}}{(q;q)_{m_r(\lambda)}} \F_{\lambda}(u_1,\ldots,u_n|s_0,s_1,\ldots).
$$

\begin{cor} 
\label{rec1} 
For any $n \ge 1$, we have 
\begin{multline*} 
H_1(u_1,\ldots,u_n|s_0,s_1,\ldots) = \sum_{l \ge 0} \sum_{T \subsetneq [n]} (-s_l;q)_{n-|T|} 
\prod_{i \in T} (u_i-s_l) \\ \times  \prod_{i=1}^n \left( \frac{1}{1-s_l u_i} \prod_{j=0}^{l-1} \frac{u_i-s_j}{1-s_j u_i} \right) 
\prod_{i \in T,j \in T^c} \frac{u_i-q u_j}{u_i-u_j} H_1(u_T|s_{l+1},s_{l+2},\ldots).
\end{multline*} 
\end{cor}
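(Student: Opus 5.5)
The plan is to insert the recurrence of Proposition~\ref{rec} into the definition of $H_1$ and then reorganize the resulting sum. I would classify each partition $\lambda=(\lambda_1\ge\dots\ge\lambda_n\ge0)$ by its smallest part $l=\lambda_n$ and by $k$, the number of parts strictly larger than $l$, so $0\le k\le n-1$ and $m_l(\lambda)=n-k$. This gives a bijection
\[
\lambda\;\longleftrightarrow\;(l,k,\mu),\qquad \mu=\lambda^k-l-1=(\lambda_1-l-1,\dots,\lambda_k-l-1),
\]
where $\mu$ ranges over all partitions of length $k$ allowing zero parts. The first step is to record how the weight factorizes under this bijection. Parts $r<l$ do not occur in $\lambda$, so they contribute the factor $1$. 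The parts equal to $l$ contribute $(-s_l;q)_{n-k}/(q;q)_{n-k}$. A part $r>l$ of $\lambda$ corresponds to a part $r-l-1$ of $\mu$ with the same multiplicity, so it contributes $(-s_r;q)_{m_{r-l-1}(\mu)}/(q;q)_{m_{r-l-1}(\mu)}$. With the shifted spin sequence $s'_j=s_{l+1+j}$, this last factor is exactly the weight attached to $\mu$ in $H_1(\,\cdot\,|s_{l+1},s_{l+2},\dots)$.

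Next I would apply Proposition~\ref{rec} to $\F_\lambda$. Its prefactor $(q;q)_{n-k}$ cancels the denominator $(q;q)_{n-k}$ found above, leaving $(-s_l;q)_{n-k}=(-s_l;q)_{n-|T|}$ for each $T\in\binom{[n]}{k}$. The remaining factors, namely
\[
\prod_{i=1}^n\frac{1}{1-s_lu_i}\prod_{j=0}^{l-1}\frac{u_i-s_j}{1-s_ju_i},\qquad \prod_{i\in T}(u_i-s_l),\qquad \prod_{i\in T,\,j\in T^c}\frac{u_i-qu_j}{u_i-u_j},
\]
depend only on $l$ and $T$, not on $\mu$. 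So I would swap the order of summation: sum over $l\ge0$ and $T\subseteq[n]$ with $|T|=k\le n-1$ (that is, $T\subsetneq[n]$), and put the sum over $\mu$ innermost. That inner sum is
\[
\sum_{\mu}\prod_{r\ge0}\frac{(-s_{l+1+r};q)_{m_r(\mu)}}{(q;q)_{m_r(\mu)}}\,\F_\mu(u_T|s_{l+1},\dots)=H_1(u_T|s_{l+1},s_{l+2},\dots),
\]
which is exactly the claimed recurrence. The boundary case $T=\emptyset$ (so $k=0$) needs a convention: $H_1$ of the empty variable set equals $1$, matching $\F_{\emptyset}=1$.

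The step I expect to need the most care is justifying the interchange of summations, since $H_1$ is an infinite sum. I would work with formal power series in $x_1,\dots,x_n$ under the specialization of Theorem~\ref{main1}, where $s_j=s$ for $j\ge p$ and $u_i=(s+x_i)/(1+sx_i)$. For $j\ge p$ each factor $(u_i-s_j)/(1-s_ju_i)$ equals $x_i$, so every term in the sum over $l$ and $\mu$ has $x$-degree growing with $\lambda_n$ (and with $\lambda_1$). Hence only finitely many terms contribute to each coefficient, and the rearrangement is legitimate. Apart from this check, the bookkeeping of multiplicities under the shift $r\mapsto r-l-1$ is the only point where an error could slip in.
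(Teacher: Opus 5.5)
Your proposal is correct and matches the paper's proof. Both group the partitions by the smallest part $l=\lambda_n$ and its multiplicity $n-k$, apply Proposition~\ref{rec} so that $(q;q)_{n-k}$ cancels, swap the sum over $T\in\binom{[n]}{k}$ with the sum over the parts larger than $l$, and recognize the inner sum as $H_1(u_T|s_{l+1},s_{l+2},\ldots)$. Your justification of the interchange through the formal power series specialization of Theorem~\ref{main1} is a detail the paper leaves implicit.
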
 
\begin{proof}
    Let us denote by $n-k$ the multiplicity of the smallest part $l=\lambda_n$ of $\lambda$. Then we can rewrite $H_1(u_1,\ldots,u_n|s_0,s_1,\ldots)$ as
\begin{equation*}
    \sum_{l\ge 0}\sum_{k=0}^{n-1}\frac{(-s_l;q)_{n-k}}{(q;q)_{n-k}}\sum_{\lambda_1 \ge \lambda_2 \ge \ldots \ge \lambda_k > l} 
\prod_{r > l} \frac{(-s_r;q)_{m_r(\lambda)}}{(q;q)_{m_r(\lambda)}} \F_{(\lambda_1,\dots,\lambda_k,l,\dots,l)}(u_1,\ldots,u_n|s_0,s_1,\dots).
\end{equation*}
Using \eqref{eq: recurrence for F lambda}, this becomes
\begin{multline*}
    \sum_{l\ge 0}\sum_{k=0}^{n-1}\frac{(-s_l;q)_{n-k}}{(q;q)_{n-k}}\sum_{\lambda_1 \ge \lambda_2 \ge \ldots \ge \lambda_k > l} 
    \prod_{r>l} \frac{(-s_r;q)_{m_r(\lambda)}}{(q;q)_{m_r(\lambda)}} (q;q)_{n-k}\left(\prod_{i=1}^n\frac{1}{1-s_lu_i}\prod_{j=0}^{l-1}\frac{u_i-s_j}{1-s_ju_i}\right)\\\times\sum_{T\in\left([n]\atop k\right)}\prod_{i\in T}(u_{i}-s_l) \prod_{i\in T, j\in T^c}\frac{u_{i}-qu_{j}}{u_{i}-u_{j}}  F_{\lambda^k-l-1}(u_T|s_{l+1},s_{l+2},\dots).
\end{multline*}
Now, interchanging the two inner sums and regrouping some terms yields the claim.
\end{proof}

Next, we consider the left-hand side of the identity in Theorem~\ref{main2}, that is 
\begin{multline*} 
H_2(u_1,\ldots,u_n|s_0,s_1,\ldots) \overset{\textrm{def}}{=}
\sum_{\lambda_1 \ge \lambda_2 \ge \ldots \ge \lambda_n \ge 0} 
\frac{(-\gamma q^{1/ 2};q^{1 / 2})_{m_0(\lambda)}}{(q;q)_{m_0(\lambda)}} (-\gamma^{-1}s_0;q^{1 / 2})_{m_0(\lambda)} \\
\times
\prod_{r \ge 1} \frac{(-s_r;q^{1 /2})_{m_r(\lambda)}}{(q^{1 /2};q^{1 /2})_{m_r(\lambda)}}  \F_{\lambda}(u_1,\ldots,u_n|s_0,s_1,\ldots).
\end{multline*} 
and denote its specialization at $\gamma=1$ by $\widehat{H}_2(u_1,\ldots,u_n|s_0,s_1,\ldots)$, which is the left-hand side of 
Corollary~\ref{main2:cor}.

Again, Proposition~\ref{rec} implies the following recurrence by taking into account the multiplicities of the smallest part $\lambda_n$, and interchanging sums. It is necessary to distinguish between the cases 
$l=\lambda_n=0$ and $l=\lambda_n >0$. Note that Iverson notation is used to express this, i.e. we have for any statement $[\textrm{statement}]=1$ if the statement is true and $[\textrm{statement}]=0$ otherwise.

\begin{cor} 
\label{rec2} 
For any $n \ge 1$, we have 
\begin{multline*} 
H_2(u_1,\ldots,u_n|s_0,s_1,\ldots) = \sum_{l \ge 0} \sum_{T \subsetneq [n]} 
(-\gamma^{[l=0]} q^{1/ 2};q^{1 / 2})_{n-|T|} (-\gamma^{-[l=0]}s_l;q^{1 / 2})_{n-|T|} \\
\times \prod_{i \in T} (u_i-s_l)  \prod_{i=1}^n \left( \frac{1}{1-s_l u_i} \prod_{j=0}^{l-1} \frac{u_i-s_j}{1-s_j u_i} \right) 
\prod_{i \in T,j \in T^c} \frac{u_i-q u_j}{u_i-u_j} \widehat{H}_2(u_T| s_{l+1},s_{l+2},\ldots).
\end{multline*} 
\end{cor}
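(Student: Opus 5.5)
We follow the proof of Corollary~\ref{rec1}: we group the partitions $\lambda$ in $H_2$ according to their smallest part $l=\lambda_n$ and its multiplicity $n-k$, $k\in\{0,\ldots,n-1\}$. Then $\lambda=(\lambda_1,\ldots,\lambda_k,l,\ldots,l)$ with $\lambda_k>l$, and $m_l(\lambda)=n-k$. All other multiplicities $m_r(\lambda)$, $r>l$, equal $m_{r-l-1}(\lambda^k-l-1)$.

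First we split off the factor attached to the part $l$. If $l=0$, this factor is
\[
\frac{(-\gamma q^{1/2};q^{1/2})_{n-k}\,(-\gamma^{-1}s_0;q^{1/2})_{n-k}}{(q;q)_{n-k}}.
\]
If $l\ge1$, it is $(-s_l;q^{1/2})_{n-k}/(q^{1/2};q^{1/2})_{n-k}$. Here we use the identity
\[
(q^{1/2};q^{1/2})_{m}\,(-q^{1/2};q^{1/2})_{m}=(q;q)_{m},
\]
so that
\[
\frac{(-s_l;q^{1/2})_{n-k}}{(q^{1/2};q^{1/2})_{n-k}}=\frac{(-q^{1/2};q^{1/2})_{n-k}\,(-s_l;q^{1/2})_{n-k}}{(q;q)_{n-k}}.
\]
In both cases the factor is therefore
\[
\frac{(-\gamma^{[l=0]}q^{1/2};q^{1/2})_{n-k}\,(-\gamma^{-[l=0]}s_l;q^{1/2})_{n-k}}{(q;q)_{n-k}}.
\]
This is why the Iverson brackets appear in the statement.

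Next we apply Proposition~\ref{rec} to $\F_\lambda$. The $(q;q)_{n-k}$ it produces cancels the denominator above. We then interchange the sum over $\lambda_1\ge\cdots\ge\lambda_k>l$ with the sum over $T\in\binom{[n]}{k}$, and collect $\bigcup_k\binom{[n]}{k}$ into $T\subsetneq[n]$ with $|T|=k$.

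It remains to identify the inner sum with $\widehat H_2(u_T|s_{l+1},s_{l+2},\ldots)$. The inner sum runs over $\mu=\lambda^k-l-1$, with $\mu_1\ge\cdots\ge\mu_k\ge0$. It carries the weights
\[
\prod_{r>l}\frac{(-s_r;q^{1/2})_{m_r(\lambda)}}{(q^{1/2};q^{1/2})_{m_r(\lambda)}}=\prod_{r'\ge0}\frac{(-s_{l+1+r'};q^{1/2})_{m_{r'}(\mu)}}{(q^{1/2};q^{1/2})_{m_{r'}(\mu)}}.
\]
These are exactly the weights of $H_2$ at $\gamma=1$ with shifted spin parameters. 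The only point to check is the $m_0(\mu)$ factor: in $H_2$ it reads
\[
\frac{(-\gamma q^{1/2};q^{1/2})_{m}\,(-\gamma^{-1}s_0;q^{1/2})_m}{(q;q)_m},
\]
and at $\gamma=1$ it reduces by the same identity to $(-s_0;q^{1/2})_m/(q^{1/2};q^{1/2})_m$. This is the main subtlety. It explains why the recursion closes on $\widehat H_2$ rather than on $H_2$: the $\gamma$-dependence only affects the zero parts of the original $\lambda$, which are all removed in the first step.
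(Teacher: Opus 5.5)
Your proposal is correct and follows the paper's route: group by the smallest part $l=\lambda_n$ and its multiplicity, apply Proposition~\ref{rec}, interchange the sums, and treat $l=0$ separately from $l>0$. The details the paper leaves implicit are exactly the ones you supply: the identity $(q^{1/2};q^{1/2})_m(-q^{1/2};q^{1/2})_m=(q;q)_m$ that unifies the two cases, and the observation that all zero parts are removed, so the inner sum is $\widehat H_2$ with shifted spin parameters.
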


As a corollary of Corollary~\ref{rec2}, we obtain the following by setting $\gamma=1$. Corollary~\ref{rec2v} is actually needed before 
Corollary~\ref{rec2} as we first derive Corollary~\ref{main2:cor} using Corollary~\ref{rec2v} and then deduce Theorem~\ref{main2} 
from Corollary~\ref{main2:cor} using Corollary~\ref{rec2}, see Section \ref{sec: proof of Thm main2}.

\begin{cor} 
\label{rec2v} 
For any $n \ge 1$, we have 
\begin{multline*} 
\widehat{H}_2(u_1,\ldots,u_n|s_0,s_1,\ldots) = \sum_{l \ge 0} \sum_{T \subsetneq [n]} 
(-q^{1/ 2};q^{1 /2})_{n-|T|} (-s_l;q^{1 /2})_{n-|T|} \\ \times
\prod_{i \in T} (u_i-s_l) \prod_{i=1}^n \left( \frac{1}{1-s_l u_i} \prod_{j=0}^{l-1} \frac{u_i-s_j}{1-s_j u_i} \right) 
\prod_{i \in T, j \in T^c} \frac{u_i-q u_j}{u_i-u_j} \widehat{H}_2(u_T|s_{l+1},s_{l+2},\ldots).
\end{multline*} 
\end{cor}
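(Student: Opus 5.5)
Corollary~\ref{rec2v} is the specialization $\gamma=1$ of Corollary~\ref{rec2}, but since the paper wants it before Corollary~\ref{rec2}, I will derive it directly from Proposition~\ref{rec}, following the proof of Corollary~\ref{rec1}.

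First, split the sum defining $\widehat{H}_2(u_1,\ldots,u_n|s_0,s_1,\ldots)$ according to the smallest part $l=\lambda_n$ and its multiplicity $n-k$, where $k\in\{0,\ldots,n-1\}$. The factor with $r=l$ is
\[
\frac{(-s_l;q^{1/2})_{n-k}}{(q^{1/2};q^{1/2})_{n-k}} .
\]
The factors with $r<l$ are $1$, since $m_r(\lambda)=0$ for those $r$. The factors with $r>l$ depend only on $(\lambda_1,\ldots,\lambda_k)$ with $\lambda_k>l$.

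Next, apply Proposition~\ref{rec}, which contributes the factor $(q;q)_{n-k}$. The key elementary observation is
\[
\frac{(q;q)_{m}}{(q^{1/2};q^{1/2})_{m}}=\prod_{i=1}^{m}\frac{1-q^{i}}{1-q^{i/2}}=\prod_{i=1}^m(1+q^{i/2})=(-q^{1/2};q^{1/2})_m .
\]
This identity produces the factor $(-q^{1/2};q^{1/2})_{n-|T|}(-s_l;q^{1/2})_{n-|T|}$, where $|T|=k$.

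Then interchange the sum over $(\lambda_1\ge\cdots\ge\lambda_k>l)$ with the sum over $T\in\binom{[n]}{k}$. Substitute $\mu=\lambda^k-l-1$, so that $\mu$ ranges over all partitions of length $k$ with zero parts allowed. The multiplicity weights transform as $m_{r}(\lambda)=m_{r-l-1}(\mu)$ for $r>l$, and the spin parameters are shifted to $s_{l+1},s_{l+2},\ldots$. Hence the inner sum is exactly
\[
\sum_\mu\prod_{r\ge0}\frac{(-s_{l+1+r};q^{1/2})_{m_r(\mu)}}{(q^{1/2};q^{1/2})_{m_r(\mu)}}\F_\mu(u_T|s_{l+1},\ldots)=\widehat{H}_2(u_T|s_{l+1},s_{l+2},\ldots).
\]
This requires that in $\widehat{H}_2$ the $r=0$ factor has the same form as the $r\ge1$ factors, which holds at $\gamma=1$ because $(-q^{1/2};q^{1/2})_m(-s_0;q^{1/2})_m/(q;q)_m=(-s_0;q^{1/2})_m/(q^{1/2};q^{1/2})_m$. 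Summing over $k$ turns $T\in\binom{[n]}{k}$ into $T\subsetneq[n]$, which gives the claim.

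The only delicate point is the rearrangement of the infinite sums. This is justified by working with formal power series in the $x_i$ after the substitution $u_i=\frac{s+x_i}{1+sx_i}$ with $s_j=s$ for $j\ge p$: the factor $\frac{u_i-s}{1-su_i}=x_i$ forces each $l$ beyond $p$ to raise the degree. I expect this to be the main obstacle only in the sense of stating it cleanly; the rest is bookkeeping.
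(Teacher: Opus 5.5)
Your proposal is correct and is essentially the paper's argument. The paper obtains Corollary~\ref{rec2v} by setting $\gamma=1$ in Corollary~\ref{rec2}, and it derives Corollary~\ref{rec2} from Proposition~\ref{rec} exactly as you do: split by the smallest part and its multiplicity, use $(q;q)_m/(q^{1/2};q^{1/2})_m=(-q^{1/2};q^{1/2})_m$, and interchange sums. Its remark that Corollary~\ref{rec2v} is "needed before" Corollary~\ref{rec2} refers to the order in which they are used for Corollary~\ref{main2:cor} and Theorem~\ref{main2}, not the order of derivation, so specializing Corollary~\ref{rec2} would not have been circular either.
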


\section{Proof of Theorem \ref{main1}}\label{sec: proof of Thm main1}

We prove Theorem~\ref{main1} by induction with respect to $n$. The case $n=0$ is obvious. Regarding the induction step, we use Corollary~\ref{rec1} to show that the right-hand side of the identity in Theorem~\ref{main1}  satisfies the recursion that we established for the left-hand side in Corollary~\ref{rec1}. 
Concretely, we need to prove 
\begin{multline}
\prod_{i=1}^n \frac{1}{1-u_i} \prod_{1 \le i < j \le n} \frac{1-q u_i u_j}{1-u_i u_j}  = \sum_{l \ge 0} \sum_{T \subsetneq [n]} (-s_l;q)_{n-|T|} 
\prod_{i \in T} (u_i-s_l) \\ \times  \prod_{i=1}^n \left( \frac{1}{1-s_l u_i} \prod_{j=0}^{l-1} \frac{u_i-s_j}{1-s_j u_i} \right) 
\prod_{i \in T, j \in T^c} \frac{u_i-q u_j}{u_i-u_j} \prod_{i \in T} \frac{1}{1-u_i} \prod_{ i,j \in T\atop i<j} \frac{1-q u_i u_j}{1-u_i u_j}.
\label{eq:A}
\end{multline}

The key ingredient will be the polynomial identity stated in the following lemma. For a subset $T$ of another set $S$, we define 
\begin{equation}\label{eq: def of sign of set}
    \sgn_S(T)  \overset{\textrm{def}}{=} (-1)^{\# \{i \in T, j \in S \setminus T | i>j\}}.
\end{equation}

\begin{lem}\label{lem: Key lemma for main1}
For any $n \ge 1$ and a variable $s$, we have 
\begin{multline*} 
\prod_{i=1}^n (1- s u_i) \prod_{1 \le i < j \le n} (1- q u_i u_j)(u_i-u_j) 
= \sum_{T \subseteq [n]} \sgn_{[n]}(T)  (-s;q)_{n-|T|} \prod_{i \in T^c} (1-u_i)   
\\ \times \prod_{i \in T} (u_i - s)  \prod_{ i,j \in T^c\atop i<j} (1-u_i u_j)(u_i-u_j)
\prod_{i \in T,j \in T^c} (u_i-q u_j)(1- u_i u_j)  \prod_{i,j \in T\atop i<j} (1 - q u_i u_j)(u_i-u_j).
\end{multline*} 
\end{lem}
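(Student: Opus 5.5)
We prove the identity by interpolation, viewing both sides as polynomials in $u_n$ (equivalently, by induction on $n$ after splitting off one variable). First we establish antisymmetry: the left-hand side is antisymmetric in $u_1,\ldots,u_n$, because of the factor $\prod_{i<j}(u_i-u_j)$ while the remaining factors are symmetric. On the right-hand side, the summand for $T$ is antisymmetric up to the sign $\sgn_{[n]}(T)$. Concretely, swapping $u_k$ and $u_{k+1}$ maps the summand for $T$ to the summand for $\tau(T)$, where $\tau=(k\ k+1)$, with an overall factor $-1$. This holds because $\sgn_{[n]}$ changes exactly when precisely one of $k,k+1$ lies in $T$, and the mixed factor $(u_i-qu_j)(1-u_iu_j)$ has no antisymmetric part to compensate. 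Hence both sides are antisymmetric polynomials, and the right-hand side is divisible by $\prod_{i<j}(u_i-u_j)$. After dividing, both sides are symmetric polynomials.

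Next we compare degrees in $u_n$. The left-hand side divided by the Vandermonde has degree $1+(n-1)=n$ in $u_n$. For the right-hand side we bound the degree of each summand in $u_n$ and check that it is at most $n$ after division: each of the $n-1$ pairs containing $n$ contributes degree $\le 2$ before division and $\le 1$ after, and the single-variable factor contributes $1$. It therefore suffices to check agreement at $n+1$ values of $u_n$, or at $n$ values plus the leading coefficient.

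The natural evaluation points are $u_n=u_i^{-1}$ and $u_n=q^{-1}u_i^{-1}$ for $i<n$, together with $u_n=1$ and $u_n=s^{-1}$. At $u_n=s^{-1}$ the left-hand side vanishes. On the right, we pair the terms with $n\in T$ and $n\notin T$ and use the vanishing of $(1-u_iu_j)$ or $(u_i-qu_j)$ to reduce to the $(n-1)$-variable identity with a modified $s$. At $u_n=q^{-1}u_i^{-1}$ the left-hand side vanishes. On the right, only the summands with $i,n$ not both in $T$ survive, and these cancel pairwise under the swap $i\leftrightarrow n$ of membership. At $u_n=1$ every term with $n\in T^c$ dies because of the factor $(1-u_n)$. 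The remaining terms have $n\in T$ and $u_n-s=1-s$, and they should reduce by induction to $(1-s)$ times the $(n-1)$-case with $s$ replaced by $qs$, using $(-s;q)_{m}$ versus $(-qs;q)_{m}$ and the factors $(u_i-q)(1-u_i)$ against $(1-qu_i)(1-u_i)$. Adjusting $s$ inside the induction requires that the lemma be proved for a generic $s$, which it is.

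The main obstacle is this last reduction. The Pochhammer shift does not match term by term, so we will likely need a two-term recursion relating the sums with $s$ and $qs$, or we will instead evaluate at $u_n=u_i^{-1}$ (where $\prod(1-u_iu_j)$ kills the mixed $T$ terms) combined with the leading coefficient in $u_n$. If point evaluations prove too messy, the fallback is to compare leading coefficients in $u_n$ directly, since these reduce cleanly to the $(n-1)$-variable identity with $s$ unchanged. Symmetry and the vanishing at $u_n=q^{-1}u_i^{-1}$ for all $i$ then give $2(n-1)$-type constraints, enough when combined with induction.
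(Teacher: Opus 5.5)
You have a genuine gap: your antisymmetry argument and degree bound are correct (the quotient by $\prod_{i<j}(u_i-u_j)$ has degree at most $n$ in $u_n$), but the evaluations you propose to verify do not go through. At $u_n=q^{-1}u_i^{-1}$ nothing exchanges $u_i$ and $u_n$; the swap pairing only works at $u_n=u_i$, which is what the paper uses. Moreover, summands with $i,n\in T^c$ have no partner at all. Already for $n=2$ at $u_2=1/(qu_1)$ the $T=\{1,2\}$ term vanishes, but the $T=\{1\}$ term carries the factor $u_1-s$ while the $T=\{2\}$ term does not, and the $T=\emptyset$ term is nonzero. For $q=2$, $s=3$, $u_1=5$ the three surviving terms are $17.28$, $229.68$, $-246.96$. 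They sum to zero only because the identity holds, which is what you are trying to prove. At $u_n=s^{-1}$ none of the factors $1-u_iu_n$, $u_i-qu_n$ vanish, so your sketched mechanism does not exist.

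You also misdiagnose $u_n=1$. Only $T=U\cupdot\{n\}$ survive, and $\sgn_{[n]}(T)=(-1)^{|U^c|}\sgn_{[n-1]}(U)$. The factors $\prod_{j\in U^c}(1-qu_j)(1-u_j)\prod_{i\in U}(1-qu_i)(u_i-1)$ absorb this sign into the $U$-independent factor $\prod_{i<n}(1-qu_i)(u_i-1)$. Together with $u_n-s=1-s$ and the unchanged $(-s;q)_{n-1-|U|}$, you get exactly $(1-s)\prod_{i<n}(1-qu_i)(u_i-1)$ times the $(n-1)$-case with the \emph{same} $s$, matching the left-hand side. The shift $s\to qs$ belongs to the point $u_n=s$, which you never use: there every $T\ni n$ dies, and $(1-s)(-s;q)_{n-|T|}=(1-s^2)(-qs;q)_{n-1-|T|}$ makes induction with $qs$ applicable.

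A valid set of $n+1$ conditions on the quotient is the following. First, the $n-1$ points $u_n=u_k^{-1}$: only $T\supseteq\{k,n\}$ survive, and after extracting the factors involving $u_k,u_n$, induction applies with the same $s$. Second, the point $u_n=1$. Third, either $u_n=s$ or the leading coefficient in $u_n$.

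Your leading-coefficient claim is in fact correct. Fix $U\subseteq[n-1]$, set $m=n-1-|U|$, and let $R_U$ denote the rest of the $(n-1)$-summand. The coefficients of $u_n^{2n-1}$ from $T=U\cupdot\{n\}$ and from $T=U$ are $\sgn_{[n-1]}(U)\,q^{|U|}\prod_{i<n}u_i\,R_U$ times $(-s;q)_m$ and times $-(-s;q)_{m+1}$, respectively. Since $(-s;q)_m-(-s;q)_{m+1}=-sq^m(-s;q)_m$, the sum is $-sq^{n-1}\prod_{i<n}u_i$ times the $(n-1)$-case, exactly as on the left.

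With these choices your argument becomes essentially the paper's proof. The paper checks $u_n=u_k$ (equivalent to your antisymmetry), $u_n=u_k^{-1}$, $u_n=1$ and $u_n=s$ on the undivided polynomial of degree $2n-1$. As written, though, your proposal rests on two unjustified vanishing claims. Its closing count ($n-1$ zeros at $q^{-1}u_i^{-1}$ plus a leading coefficient) is also one condition short for a degree-$n$ quotient.
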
 
\begin{proof} The proof is by induction with respect to $n$. The identity is obvious for $n=0$.  
Both sides of the identity are polynomials in $u_n$ of degree no greater than $2n-1$. We provide $2n$ evaluations on which they agree. 
More concretely, these are 
\begin{itemize}
\item $u_n=u_k$ for $1 \le k \le n-1$, 
\item $u_n=u_k^{-1}$ for $1 \le k \le n-1$, 
\item $u_n=1$, 
\item $u_n=s$.
\end{itemize}

\emph{Case $u_n=u_k$, $1 \le k \le n-1$:}  The left-hand side of the identity obviously vanishes, and it remains to show that this is also the 
case for the right-hand side. 

A summand of the right-hand side vanishes if either $k, n \in T$ or $k, n \in T^c$. Therefore, we can assume that $k \in T$ and $n \in T^c$, or 
$k \in T^c$ and $n \in T$. If we are considering a $T$ of the first type, then $T'=T \setminus \{k\} \cupdot \{n\}$ is of the second type. Note that here $A\cupdot B$ denotes the disjoint union of two sets $A$ and $B$. It can be checked that 
the summands of $T$ and $T'$ agree under this specialization up to sign, which establishes a sign-reversing involution of the summands under this specialization. 

\emph{Case $u_n=u_k^{-1}$, $1 \le k \le n-1$:} The left-hand side of the identity becomes 
\begin{multline} 
\label{LHSinv}
(-1)^{k+1} (1- s u_k)(1- s u_k^{-1}) \prod_{1 \le i \le n-1\atop i \not= k} (1-s u_i)(u_k-u_i)(1- q u_i u_k) \\ 
\times \prod_{i=1}^{n-1} (1-q u_i u_k^{-1})(u_i-u_k^{-1}) \prod_{1 \le i < j \le n-1\atop i,j \not= k} (1-q u_i u_j)(u_j-u_i).
\end{multline} 

Concerning the right-hand side, note that the appearance of the factor $1- u_i u_j$ in the product over all $i<j$ with $i,j \in T^c$ and over all $i,j$ with 
$i \in T, j \in T^c$ implies that the summand vanishes unless $k,n \in T$. Writing every such $T$ as $T=U\cupdot\{k,n\}$ and after some manipulations, 
we see that the right-hand side is under this specialization equal to
\begin{multline*} 
(-1)^{k+1} (u_k^{-1}-s)(u_k-s) \prod_{i=1}^{n-1} (1- q u_i u_k^{-1})(u_i-u_k^{-1}) 
\prod_{1 \le i \le n-1\atop i \not=k} (u_k-u_i)(1- q u_i u_k)  \\
\times \sum_{U \subseteq [n-1] \setminus \{k\}} \sgn_{[n-1] \setminus \{k\}} (U) (-s;q)_{n-2-|U|}
\prod_{i \in U^c} (1-u_i) \prod_{i \in U} (u_i-s) \\
\times \prod_{i,j \in U^c\atop i<j} (1-u_i u_j)(u_i-u_j) 
\prod_{i \in U, j \in U^c} (u_i-q u_j)(1-u_i u_j) 
\prod_{i,j \in U\atop i<j} (1-q u_i u_j)(u_i-u_j),
\end{multline*} 
where $U^c$ is the complement of $U$ in $[n-1] \setminus \{k\}$. By induction, this is equal to 
\begin{multline*}
(-1)^{k+1} (u_k^{-1}-s)(u_k-s) \prod_{i=1}^{n-1} (1- q u_i u_k^{-1})(u_i-u_k^{-1}) 
\prod_{1 \le i \le n-1\atop i \not=k} (u_k-u_i)(1- q u_i u_k)  \\ 
\times \prod_{1 \le i \le n-1\atop i \not=k} (1- s u_i) \prod_{1 \le i < j \le n-1\atop i,j \not= k} (1- q u_i u_j)(u_i-u_j), 
\end{multline*} 
which is a simple rewriting of \eqref{LHSinv}.

\emph{Case $u_n=1$:} The left-hand side simplifies to 
\begin{equation}
\label{LHS1} 
(1-s) \prod_{i=1}^{n-1} (1- q u_i)(u_i-1) \prod_{i=1}^{n-1} (1- s u_i) \prod_{1 \le i < j \le n-1} (1- q u_i u_j)(u_i-u_j).
\end{equation} 
As for the right-hand side, only summands accompanying subsets $T \subseteq [n]$ that contain $n$ are non-vanishing. Writing every such $T$ as $T=U\cupdot\{n\}$, the right-hand side is equal to 
\begin{multline*}
(1-s) \prod_{i=1}^{n-1} (1- q  u_i)(u_i-1) \sum_{U \subseteq [n-1]} 
\sgn_{[n-1]}(U) 
(-s;q)_{n-1-|U|} \prod_{i \in U^c} (1-u_i) \\
\times\prod_{i \in U} (u_i-s)   \prod_{i,j \in U^c\atop i<j} (1-u_i u_j)(u_i-u_j) 
\prod_{i \in U, j \in U^c} (u_i- q u_j)(1- u_i u_j) 
\prod_{i,j \in U\atop i<j} (1- q u_i u_j)(u_i - u_j), 
\end{multline*}
where $U^c$ is the complement of $U$ in $[n-1]$ and this is obviously equal to \eqref{LHS1} by induction. 

\emph{Case $u_n=s$:} Under this specialization, the left-hand side is equal to 
$$
(1-s^2) \prod_{i=1}^{n-1} (1-s u_i)(1- q s u_i)(u_i-s) \prod_{1 \le i < j \le n-1} (1-q u_i u_j)(u_i-u_j).
$$
Concerning the right-hand side, we may assume $n \in T^c$ in this case, as otherwise the corresponding summand vanishes. The right-hand side can now be written as 
\begin{multline*} 
(1-s) \prod_{i=1}^{n-1} (1 - s u_i)(u_i-s) \sum_{T \subseteq [n-1]} \sgn_{[n-1]}(T) (-s;q)_{n-|T|}  
\prod_{i \in T^c} (1-u_i)  \prod_{i \in T} (u_i- q s)  \\
\times \prod_{i,j \in T^c\atop i<j} (1-u_i u_j)(u_i-u_j) 
\prod_{i \in T, j \in T^c} (u_i-q u_j)(1-u_i u_j)  
\prod_{i,j \in T\atop i<j} (1- q u_i u_j)(u_i-u_j), 
\end{multline*}
where $T^c$ is the complement of $T$ in $[n-1]$. We use 
$$
(1-s) (-s;q)_{n-|T|}  = (1-s^2) (-s q;q)_{n-1-|T|},
$$
and then the induction hypothesis for $n-1$, where we replace $s$ by  $s q$. 
\end{proof} 

We now reduce \eqref{eq:A} to Lemma \ref{lem: Key lemma for main1}, to finish the proof of Theorem \ref{main1}.
\begin{proof}[Proof of Theorem \ref{main1}]
    We need to show \eqref{eq:A} for any fixed $p \ge 0$ where we set $s_l=s_p$ for $l \ge p$. We split the sum over $l$ on the right-hand side into the part where $l < p$ and the remainder where $l \ge p$. The sum over $l \ge p$ can then be computed using the formula for the geometric series. After further multiplying both sides by $1-\prod_{i=1}^n \frac{u_i-s_p}{1- s_p u_i}$, we see that \eqref{eq:A} is equivalent to the following identity.
\begin{multline}\label{A''}
    \left( 1-\prod_{i=1}^n \frac{u_i-s_p}{1- s_p u_i} \right)\prod_{i=1}^n \frac{1}{1-u_i} \prod_{1 \le i < j \le n} \frac{1-q u_i u_j}{1-u_i u_j} = \sum_{T \subsetneq [n]} \prod_{i \in T,j \in T^c} \frac{u_i-q u_j}{u_i-u_j} \prod_{i \in T} \frac{1}{1-u_i}\\
    \times \prod_{i,j \in T\atop i<j} \frac{1-q u_i u_j}{1-u_i u_j} \left( 
    \sum_{l=0}^{p} (-s_l;q)_{n-|T|} \prod_{i \in T} (u_i-s_l) 
    \prod_{i=1}^n \left( \frac{1}{1-s_l u_i} \prod_{j=0}^{l-1} \frac{u_i-s_j}{1-s_j u_i} \right) \right. \\
    \left. -\prod_{i=1}^n \frac{u_i-s_p}{1- s_p u_i} \sum_{l=0}^{p-1} (-s_l;q)_{n-|T|} \prod_{i \in T} (u_i-s_l) 
    \prod_{i=1}^n \left( \frac{1}{1-s_l u_i} \prod_{j=0}^{l-1} \frac{u_i-s_j}{1-s_j u_i} \right) \right)
\end{multline}
Next we show that \eqref{A''} follows from 
\begin{multline}\label{a}
    \prod_{i=1}^n \prod_{j=0}^{l-1} \frac{u_i-s_j}{1-s_j u_i} 
    \left( 1-\prod_{i=1}^n \frac{u_i-s_l}{1- s_l u_i} \right) 
    \prod_{i=1}^n \frac{1}{1-u_i} \prod_{1 \le i < j \le n} \frac{1-q u_i u_j}{1-u_i u_j} \\
    = \sum_{T \subsetneq [n]} \prod_{i \in T, j \in T^c} \frac{u_i-q u_j}{u_i-u_j} \prod_{i \in T} \frac{1}{1-u_i} \prod_{i,j \in T\atop i<j} \frac{1-q u_i u_j}{1-u_i u_j} \\
    \times (-s_l;q)_{n-|T|} \prod_{i \in T} (u_i-s_l) 
    \prod_{i=1}^n \left( \frac{1}{1 - s_l u_i} \prod_{j=0}^{l-1} \frac{u_i-s_j}{1-s_j u_i} \right), 
\end{multline} 
where  $l$ is a non-negative integer. Indeed, we obtain \eqref{A''} after summing \eqref{a} over $l \in \{0,\ldots,p\}$ and then subtracting the sum over $l \in \{0,\ldots,p-1\}$ multiplied by $\prod_{i=1}^n \frac{u_i-s_p}{1-s_p u_i}$. For the right-hand side of \eqref{A''}, this is immediate, while for the left-hand side, this follows by telescoping. Further straightforward manipulations of \eqref{a} and replacing $s_l$ by $s$ now yield the identity from Lemma~\ref{lem: Key lemma for main1}.
\end{proof}

\section{Proof of Theorem \ref{main2}}\label{sec: proof of Thm main2}
The idea of the proof is similar to that of Theorem \ref{main1}. We aim to show that the right-hand side of the identity in Theorem~\ref{main2} satisfies the recursion for the left-hand side which was derived in Corollary~\ref{rec2}. 

However, Corollary \ref{rec2} relates $H_2(u_1,\dots ,u_n| s_0,s_1,\dots)$ to $\widehat{H}_2(u_T| s_l,s_{l+1},\dots)$ and not to $H_2(u_T| s_l,s_{l+1},\dots)$, so we cannot simply use induction as in the case of Theorem \ref{main1}. Noting that the right-hand side of Corollary \ref{main2:cor} provides a Pfaffian expression for $\widehat{H}_2(u_1,\dots,u_n | s_0,s_{1},\dots)$, we will first prove Corollary~\ref{main2:cor} and then insert that Pfaffian expression into the right-hand side of Corollary~\ref{rec2} and give a proof of the corresponding identity \eqref{eq:to show for main 2} to eventually prove Theorem~\ref{main2}. 

The idea of proof for Corollary~\ref{main2:cor} itself is the same as that of Theorem \ref{main1}. We use induction with respect to $n$; the case $n=0$ is again obvious. Regarding the induction step, we use Corollary~\ref{rec2v} this time: we aim to show that the right-hand side of the identity in Corollary~\ref{main2:cor} also satisfies the recursion for the left-hand side stated in Corollary~\ref{rec2v}.

Throughout the remainder of the paper, for a set $T$ of positive integers, we denote $T_0 \overset{\textrm{def}}{=} T \cup \{0\}$.
Thus, in order to show Corollary~\ref{main2:cor} using Corollary~\ref{rec2v} following the approach we have outlined above, we need to show 
\begin{multline}\label{to be eased by notation}
    \prod_{i=1}^n \frac{1+q^{1/2}}{1-u_i}
    \prod_{1 \le i < j \le n} \frac{1-q  u_i u_j}{u_i-u_j} \\ \times
    \pf_{\chi_{\mathrm{even}}(n) \le i < j \le n} \left(
    		\begin{cases}
    			1, & i=0,\\
    			\frac{(u_i-u_j)((1+q)(1- q^{1 /2} u_i u_j) + (u_i + u_j)(q^{1 /2}- q)) }{(1+q^{1/2})(1- u_i u_j)(1- q u_i u_j)},& i\ge 1,
    		\end{cases} \right)\\= \sum_{l\ge 0}\sum_{T\subsetneq[n]}(-q^{1 /2};q^{1 /2})_{n-\vert T\vert } (-s_l;q^{1 /2})_{n-\vert T\vert }\prod_{i\in T}\frac{(1+q^{1/2})(u_{i}-s_l)}{1-u_i}\\\times \prod_{i=1}^n \left(\frac{1}{1-s_lu_i} \prod_{j=0}^{l-1}\frac{u_i-s_j}{1-s_ju_i}\right)\prod_{i\in T,j\in T^c}\frac{u_{i}-qu_{j}}{u_{i}-u_{j}}
    \prod_{i,j\in T\atop i<j} \frac{1-q  u_i u_j}{u_i-u_j}\\\times \pf_{\chi_{\textrm{even}}(\vert T\vert)\le i<j\le n\atop i,j\in T_0} \left(
    		\begin{cases}
    			1, & i=0,\\
    			\frac{(u_i-u_j)((1+q)(1- q^{1 /2} u_i u_j) + (u_i + u_j)(q^{1 /2}- q)) }{(1+q^{1/2})(1- u_i u_j)(1- q u_i u_j)},& i\ge 1,
    		\end{cases} \right).
\end{multline}
After this, to deduce Theorem~\ref{main2} from Corollary~\ref{main2:cor}, we need to show 
\begin{multline}\label{eq:to show for main 2}
    \prod_{i=1}^n \frac{1+q^{1/2}}{1-u_i}
\prod_{1 \le i < j \le n} \frac{1-q  u_i u_j}{u_i-u_j} \\
\times 
\pf_{\chi_{\mathrm{even}}(n) \le i < j \le n} \left(
		\begin{cases}
			1 + (\gamma-1) \frac{(q^{1/2} - \gamma^{-1} s_0)(1-u_j)}{(1+q^{1/2}) (1- s_0 u_j)}, & i=0,\\
			\frac{(u_i-u_j)((1+q)(1- q^{1 /2} u_i u_j) + (u_i + u_j)(q^{1 /2}- q)  +
			(\gamma-1) f(u_i,u_j))}{(1+q^{1/2})(1- u_i u_j)(1- q u_i u_j)},& i\ge 1,
		\end{cases} \right)  \\= \sum_{l \ge 0} \sum_{T \subsetneq [n]} 
(-\gamma^{[l=0]} q^{1/ 2};q^{1 / 2})_{n-|T|} (-\gamma^{-[l=0]}s_l;q^{1 / 2})_{n-|T|} \prod_{i \in T} (u_i-s_l)  \\
\times\prod_{i=1}^n \left( \frac{1}{1-s_l u_i} \prod_{j=0}^{l-1} \frac{u_i-s_j}{1-s_j u_i} \right) 
\prod_{i \in T, j \in T^c} \frac{u_i-q u_j}{u_i-u_j}\prod_{i\in T} \frac{1+q^{1/2}}{1-u_i}
    \prod_{i,j\in T\atop i<j} \frac{1-q  u_i u_j}{u_i-u_j} \\
    \times 
    \pf_{\chi_{\textrm{even}}(\vert T\vert)\le i<j\le n\atop i,j\in T_{0}} \left(
    		\begin{cases}
    			1, & i=0,\\
    			\frac{(u_i-u_j)((1+q)(1- q^{1 /2} u_i u_j) + (u_i + u_j)(q^{1 /2}- q)) }{(1+q^{1/2})(1- u_i u_j)(1- q u_i u_j)},& i\ge 1,
    		\end{cases} \right)
\end{multline}
by Corollary~\ref{rec2}, where the definition of $f(u,v)$ is given in Theorem~\ref{main2}.

In Sections \ref{subsec: Proof of Corollary main2} and \ref{subsec: Proof of Thm main2}, we reduce both of these identities \eqref{to be eased by notation} and \eqref{eq:to show for main 2} to polynomial identities, which in turn will both follow from the identity \eqref{eq: key Lemma 2} stated in Lemma \ref{lem: Key lemma for main2}.

Before we state this lemma, we introduce some notation. We denote, for fixed $n$, by $\Mgamma$ the matrix that is 
obtained by extending the following triangular array
\begin{equation}\label{eq: Def of Mgamma}
    \left(
		\begin{cases}
			1 + (\gamma-1) \frac{(q^{1/2} - \gamma^{-1} s_0)(1-u_j)}{(1+q^{1/2}) (1- s_0 u_j)}, & i=0,\\
			\frac{(u_i-u_j)((1+q)(1- q^{1 /2} u_i u_j) + (u_i + u_j)(q^{1 /2}- q)  +
			(\gamma-1) f(u_i,u_j))}{(1+q^{1/2})(1- u_i u_j)(1- q u_i u_j)},& i\ge 1,
		\end{cases} \right)_{\chi_{\mathrm{even}}(n) \le i < j \le n}
\end{equation}
to a skew-symmetric matrix, 
which is just the matrix underlying the Pfaffian on the left-hand side of equation \eqref{eq:to show for main 2}, and by replacing all $s_0$ by $s$. The specialization at $\gamma=1$ is 
denoted by $M\Def \Mone$, which is the matrix underlying the Pfaffian on the right-hand sides of \eqref{to be eased by notation} and \eqref{eq:to show for main 2}, 
and also on the left-hand side of \eqref{to be eased by notation}. 

Since we aim to reduce the equations \eqref{to be eased by notation} and \eqref{eq:to show for main 2} to polynomial identities in order to 
prove them using elementary polynomial arguments, it will be useful to multiply $\Mgamma$ by simple matrices so that the 
entries become polynomials in $u_1,\dots,u_n$. To do this, we define a diagonal matrix $B(U,V)$ for two subsets $U,V\subseteq[n]$ by
\begin{equation}\label{eq: Def of B(T)}
   B(U,V)\Def  \diag_{\chi_{\textrm{even}}(\vert V\vert)\le i \le n:i \in V_0 }  \left(\begin{cases}  1 & i=0 \\
                \prod\limits_{i  <  k \le n \atop k\in U}(1-u_iu_k)(1-qu_iu_k) & i \ge 1
            \end{cases} \right),
\end{equation}
and note that
\begin{equation}\label{eq:det of B(T)}
    \det\left(B(U,V) \right)=\prod_{i\in V,j\in U \atop i<j}(1-u_iu_j)(1-qu_iu_j).
\end{equation}
We also introduce the following notation. For a matrix $A=(a_{i,j})_{0 \le i,j\le n}$ and a subset $U\subseteq [n]$, we set
\begin{equation}\label{eq: Notation for matrix restricted 1}
    A_U\Def (a_{i,j})_{\chi_{\textrm{even}}(\vert U \vert)\le i,j\le n: i,j\in U_0}.
\end{equation}
Furthermore, we denote the matrix-product conjugation with $B(U,U)$ by
\begin{equation}\label{eq: Def of Conj by B(T)}
    \overline{A_U}\Def B(U,U) \cdot A_U \cdot B(U,U).
\end{equation}
The following observation is one essential ingredient in our inductive proofs of \eqref{to be eased by notation} and \eqref{eq:to show for main 2} later on: for $S\subseteq T\subseteq[n]$, we have 
\begin{equation}\label{eq: relation to translate}
    (\overline{A_T})_S=B(T\setminus S,S)\cdot \overline{A_S}\cdot B(T\setminus S,S).
\end{equation}

Since we are dealing with Pfaffians here, we need to translate \eqref{eq: relation to translate} into a relation for Pfaffians. To this end, we summarize some basic properties of the Pfaffians; they will be used in other proofs as well. Let $A=(a_{i,j})_{1 \le i,j\le n}$ be a skew symmetric matrix, then the Pfaffian of $A$ has the following properties.
\begin{enumerate}
        \item For any matrix $B=(b_{i,j})_{1\le i,j\le n}$, we have 
        \begin{equation}\label{eq: Multiplicativity of Pfaffian}
            \pf(B \cdot A \cdot B^T)=\det(B)\pf(A).
        \end{equation}
		\item The Pfaffian is antisymmetric: for a permutation $\sigma\in \mathfrak{S}_n$, let $A_{\sigma}$ denote the matrix 
		obtained from $A$ by permuting rows and columns simultaneously according to $\sigma$. Then  
        \begin{equation}\label{eq: Antisymmetry of Pfaffian}
		    \pf(A_{\sigma})=\sgn(\sigma)\pf(A).
		\end{equation}
		This implies in particular that if any two rows (or, equivalently, any two columns) of $A$ agree, then the Pfaffian of $A$ is zero.
		Note that \eqref{eq: Antisymmetry of Pfaffian} is a consequence of \eqref{eq: Multiplicativity of Pfaffian} as the simultaneous permutation of rows and columns can be obtained by conjugating by the corresponding permutation matrix.
		\item The equivalent of the Laplace expansion formula for Pfaffians is as follows: for any $p \in [n]$ 
                 \begin{equation}\label{eq: Laplace for Pfaffian}
                    \pf_{1 \le i < j \le n} \left(A \right) = 
                    \sum_{q=1}^{p-1} (-1)^{p+q+1} a_{q,p} \pf_{1 \le i < j \le n \atop i,j \not= p,q} \left( a_{i,j} \right) \\
                    \\+ \sum_{q=p+1}^{n} (-1)^{p+q+1} a_{p,q} \pf_{1 \le i < j \le n \atop i,j \not= p,q} \left( a_{i,j} \right).
                 \end{equation} 
\end{enumerate}
Combining \eqref{eq:det of B(T)}, \eqref{eq: Def of Conj by B(T)} and \eqref{eq: Multiplicativity of Pfaffian} it follows that 
\begin{equation}\label{eq:Pfaffian of conj by B(T)}
    \pf\left(\overline{A_U}\right) =\det\left(B(U,U)\right)\pf(A_U)=\prod_{i,j\in U\atop i<j}(1-u_iu_j)(1-qu_iu_j)\pf(A_U),
\end{equation}
while combining 
\eqref{eq:det of B(T)}, \eqref{eq: relation to translate} and \eqref{eq: Multiplicativity of Pfaffian} it follows that
\begin{equation}\label{eq: Commutation relation}  
    \pf \left(\left(\overline{A_T}\right)_S\right)=\det\left(B(T\setminus S,S)\right)\pf(\overline{A_S})=\prod_{i\in S,j\in T\setminus S\atop i<j}(1-u_iu_j)(1-qu_iu_j) \pf(\overline{A_S}).
\end{equation}
We will make extensive use of these identities in the remainder of the paper. We also continue using the sign of a set relativ to a superset introduced in \eqref{eq: def of sign of set}.

As we will see in Section \ref{subsec: Proof of Corollary main2}, the special case $\gamma=1$ of the following lemma implies \eqref{to be eased by notation}, and hence also Corollary \ref{main2:cor}. In Section \ref{subsec: Proof of Thm main2}, we will then see that the lemma, combined with Corollary \ref{main2:cor}, implies \eqref{eq:to show for main 2} and thus also Theorem \ref{main2}.

\begin{lem}\label{lem: Key lemma for main2} For any $n \ge 1$ and a variable $s$, we have
\begin{multline}\label{eq: Key Lemma main2 identity A}
(1+q^{1/2})(1-s^2)
    \pf\left(\overline{\Mgamma}|_{u_1=s}\right) \prod_{i=2}^n(1-su_i) \\ =(1+\gamma q^{1/2})(1+\gamma^{-1}s)
    (1-s) \pf\left(\overline{\Mgammarootqbracket_{[n]\setminus\{1\}}}|_{s=qs}\right) \prod_{i=2}^n(s-u_i)(1-su_i) (1-squ_i)
\end{multline}
and
    \begin{multline}\label{eq: key Lemma 2}
    \prod_{i=1}^n (1+q^{1/2})(1-su_i)
    \pf\left(\overline{\Mgamma}\right)=  \sum_{T\subseteq[n]} \sgn_{[n]}(T) (-\gamma^{-1} s;q^{1 /2})_{n-\vert T\vert }(-\gamma q^{1 /2};q^{1 /2})_{n-\vert T\vert } \\ \times \prod_{i\in T,j\in T^c}(u_{i}-qu_{j})(1-u_iu_j) \prod_{i\in T^c}(1-u_i)\prod_{i\in T}(1+q^{1/2})(u_{i}-s) \\ 
   \times  \prod_{i,j\in T^c\atop i<j}(1-u_iu_j)(u_i-u_j) \pf\left(\overline{M_T}\right).
\end{multline}     
\end{lem}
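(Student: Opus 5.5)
We prove the two identities together by induction on $n$, using the interpolation strategy of Lemma~\ref{lem: Key lemma for main1}; identity \eqref{eq: Key Lemma main2 identity A} serves as the auxiliary statement that handles the evaluation $u_1=s$ in the proof of \eqref{eq: key Lemma 2}.

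For \eqref{eq: Key Lemma main2 identity A}, we set $u_1=s$ in $\overline{\Mgamma}$. The factor $(1-su_j)$ in the denominators of the entries of row $0$ and of $f(u_1,u_j)$ is cleared by the diagonal conjugation. We expect row $1$ to factor as a multiple of row $0$ plus a simple correction; row $0$ itself involves only $(1-u_j)/(1-su_j)$. We then expand the Pfaffian along row $1$ via \eqref{eq: Laplace for Pfaffian}, or perform a row/column operation using \eqref{eq: Multiplicativity of Pfaffian}, to eliminate index $1$. The goal is to recognize the remaining Pfaffian as that of $\Mgammarootqbracket$ with $s$ replaced by $qs$. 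The shift $\gamma\mapsto q^{1/2}\gamma$ together with $s\mapsto qs$ leaves $\gamma^{-1}s\mapsto q^{1/2}\gamma^{-1}s$, which matches the Pochhammer shifts. If the direct manipulation is too messy, we would instead prove \eqref{eq: Key Lemma main2 identity A} by induction on $n$ with interpolation in $u_n$: both sides are polynomials of bounded degree after conjugation, and the specializations $u_n=u_k$, $u_n=u_k^{-1}$, $u_n=q^{-1}u_k^{-1}$ reduce it to the case $n-2$.

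For \eqref{eq: key Lemma 2}, we view both sides as polynomials in $u_n$. Clearing denominators via $B$ makes the degree about $4n$, coming from the factors $(1-u_iu_n)(1-qu_iu_n)$ in the conjugation. We then check agreement at enough points, in parallel with Lemma~\ref{lem: Key lemma for main1}:
\begin{enumerate}
\item \emph{$u_n=u_k$.} The left-hand side vanishes because two rows of the Pfaffian coincide up to the conjugation factor. On the right-hand side, summands with $k,n$ on the same side vanish, and the swap $T\mapsto T\setminus\{k\}\cup\{n\}$ gives a sign-reversing involution. Here we use \eqref{eq: Antisymmetry of Pfaffian} and \eqref{eq: Commutation relation}.
\item \emph{$u_n=u_k^{-1}$ and $u_n=q^{-1}u_k^{-1}$.} After conjugation, the entry $(k,n)$ of $\overline{\Mgamma}$ survives while the other entries in rows $k,n$ pick up vanishing factors. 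The Pfaffian therefore reduces via \eqref{eq: Laplace for Pfaffian} to one of size $n-2$. On the right-hand side only the $T$ containing $\{k,n\}$ (or a prescribed configuration) survive. We rewrite $\pf(\overline{M_T})$ through \eqref{eq: Commutation relation} and apply the induction hypothesis for $n-2$.
\item \emph{$u_n=1$.} Only $T\ni n$ contribute, row $0$ interacts with row $n$, and induction for $n-1$ applies as in the case $u_n=1$ of Lemma~\ref{lem: Key lemma for main1}.
\item \emph{$u_n=s$.} Only $T\not\ni n$ contribute, since $(u_n-s)$ kills the others. The Pochhammer identity $(1+\gamma q^{1/2})(1+\gamma^{-1}s)(-\gamma^{-1}qs\cdot q^{-1/2};q^{1/2})\cdots$ shifts the parameters to $(q^{1/2}\gamma,qs)$. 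The left-hand side is then handled by \eqref{eq: Key Lemma main2 identity A}, after moving index $n$ to index $1$ by symmetry, and we finish with the induction hypothesis under $\gamma\mapsto q^{1/2}\gamma$, $s\mapsto qs$.
\end{enumerate}
If these points are fewer than the degree, we would add the leading coefficient as $u_n\to\infty$, or the additional evaluation $u_n=q^{-1}u_k^{-1}$ if it is not already counted, to complete the interpolation count.

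The main obstacle will be the $u_n=s$ case, together with proving \eqref{eq: Key Lemma main2 identity A} itself. Both require controlling the $\gamma$-dependent function $f$ precisely enough to see the parameter shift $\gamma\to q^{1/2}\gamma$, $s\to qs$. We would first test the row reduction symbolically for $n=2,3$ to find the right linear combination of rows $0$ and $1$.
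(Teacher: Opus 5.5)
Your architecture matches the paper's: joint induction on $n$ by interpolation, the involution $T\mapsto T\setminus\{k\}\cup\{n\}$ at $u=u_k$, and Laplace expansion plus \eqref{eq: Commutation relation} with induction for $n-2$ at $u=u_k^{-1}$. As in the paper, \eqref{eq: Key Lemma main2 identity A} handles the evaluation $u=s$ of \eqref{eq: key Lemma 2} with $(\gamma,s)\mapsto(q^{1/2}\gamma,qs)$. For \eqref{eq: Key Lemma main2 identity A}, your fallback (interpolation at $u_k$, $u_k^{-1}$, $(qu_k)^{-1}$, reducing to $n-2$) is the paper's proof. Your primary plan there (row $1$ at $u_1=s$ being a multiple of row $0$ plus a correction) is only a hope.

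\textbf{Where you differ.} The one genuinely different step is your evaluation $u_n=1$ in \eqref{eq: key Lemma 2}. It is valid, but ``row $0$ interacts with row $n$'' is not yet an argument. Write $g(u,v)$ for the $(i,j)$-entry of $M^{\gamma}$ with $u_i=u$, $u_j=v$, and $h(v)$ for the row-$0$ entry. One checks $(1+q)(1-q^{1/2}v)+(1+v)(q^{1/2}-q)=(1+q^{1/2})(1-qv)$ and $f(1,v)=\frac{(q^{1/2}-\gamma^{-1}s)(1-v)(1-qv)}{1-sv}$. Hence $g(1,v)=h(v)$ and $h(1)=1$, so at $u_n=1$ row $n$ of $M^{\gamma}$ coincides with row $0$.

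It follows that $\pf(M^{\gamma}_T)|_{u_n=1}=(-1)^{|T|+1}\pf(M^{\gamma}_{T\setminus\{n\}})$ for every $T\ni n$. For odd $|T|$, subtract row and column $0$ from row and column $n$ and expand; for even $|T|$, index $n$ takes over the role of $0$. Combined with $\sgn_{[n]}(U\cup\{n\})=(-1)^{n-1-|U|}\sgn_{[n-1]}(U)$, both sides of \eqref{eq: key Lemma 2} become $(-1)^{n+1}(1+q^{1/2})(1-s)\prod_{i<n}(1-u_i)(1-qu_i)$ times the corresponding sides for $n-1$.

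The paper never evaluates at $u=1$ alone. It uses only the $2n-1$ points $u_k^{\pm1}$ and $s$, and invokes antisymmetry to write the difference as $C\prod_i(u_i-s)\prod_{i<j}(u_i-u_j)(1-u_iu_j)$. It then kills $C$ by the double evaluation $(u_1,u_2)=(1,q^{-1})$, where both sides vanish. Your route needs neither the antisymmetry of the right-hand side nor a second variable, at the cost of the row identity above.

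\textbf{The point count is wrong.} As written, your interpolation does not close.

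For \eqref{eq: key Lemma 2}, the degree in $u_n$ is not ``about $4n$''. Each term of $\pf(M^{\gamma})$ contains exactly one entry from row/column $n$. After multiplying by $1-su_n$, the entry $(k,n)$ is a cubic in $u_n$ over $(1-u_ku_n)(1-qu_ku_n)$, which cancels one factor pair of $\det B([n],[n])$. The entry $(0,n)$ becomes linear. So the left-hand side has degree at most $2n-1$, and treating $n\in T$ and $n\notin T$ separately gives the same bound on the right. Thus $u_n\in\{u_k,u_k^{-1}:k<n\}\cup\{1,s\}$ are exactly enough points.

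Your reserve point $u_n=(qu_k)^{-1}$ is useless for \eqref{eq: key Lemma 2}. No factor $1-qu_ku_n$ appears outside the Pfaffians on the right, so summands with $|T\cap\{k,n\}|\le1$ survive and induction does not apply.

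For \eqref{eq: Key Lemma main2 identity A} you give no count at all. The degree is again $2n-1$, while $u_k$, $u_k^{-1}$, $(qu_k)^{-1}$ ($2\le k\le n-1$) supply only $3n-6$ points, so the step fails for $n\le5$. The paper adds the zero $u_2=s^{-1}$, coming from the uncancelled factor $\prod_{i\ge2}(1-su_i)$, and checks $n\le4$ by computer. Alternatively, the row identity also makes $u_n=1$ available here, since both sides reduce to the case $n-1$. The point $u_n=s$ kills both sides: rows $1$ and $n$ coincide on the left, and $s-u_n$ is a factor on the right. These leave fewer small cases to verify directly.
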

\begin{proof}
We start by proving {\eqref{eq: Key Lemma main2 identity A}}.
The proof is by induction with respect to $n$. The case $n=0$ is easy, and we assume $n>0$ from now on. 
As for the induction step, note that both sides are polynomials in $u_i$, $i=2,\ldots,n$, of degree at most $2n-1$, and, therefore, 
it suffices to find $2n$ evaluations for, say, $u_2$ on which both sides agree. 

Note that we do not cancel the common factor $\prod_{i=2}^n (1-su_i) $ on both sides, since this is needed to cancel the denominator in  $f(u_i,u_j)$ in the $(i,j)$-th entry of the two matrices on both sides to remain polynomial. This factor also directly produces the first simple evaluation $u_2=s^{-1}$. We provide $3n-6$ more evaluations on which both sides agree. For $n \in \{0,1,\ldots,4\}$, this equation can easily be checked by a computer, while for $n\ge 5$, we have $3n-6+1\ge 2n$ and so we found enough evaluations. More concretely, these are
\begin{itemize}
    \item $u_2=u_k$ for $3 \le k \le n$,
    \item $u_2=u_k^{-1}$ for $3 \le k \le n$,
    \item $u_2=(qu_k)^{-1}$ for $3 \le k \le n$.
\end{itemize}

\emph{Case $u_2=u_k$, $3 \le k \le n$:} Both sides of the identity vanish because two rows and their respective columns of the antisymmetric matrix agree, hence the Pfaffian vanishes by \eqref{eq: Antisymmetry of Pfaffian}.

\emph{Case $u_2=u_k^{-1}$, $3 \le k \le n$:} All entries in the second column and in the second row of $\overline{\Mgamma}$ and $\overline{\Mgammarootqbracket_{[n]\setminus\{1\}}}|_{s=qs}$ vanish under this substitution, except for the entries indexed by $k$. It can be checked that these non-vanishing entries of both sides agree. After expanding both sides along the second column and the second row using \eqref{eq: Laplace for Pfaffian},
and cancelling the common prefactor coming from the entries index by $k$ and the sign, we are left with showing that
\begin{multline*}
    (1+q^{1/2})(1+s)\prod_{i=2}^n(1-su_i)\pf \left(\left(\overline{\Mgamma}|_{u_1=s}\right)_{[n]\setminus\{2,k\}}\right)=(1+\gamma q^{1/2})(1+\gamma^{-1}s)\\\times(1-s)\prod_{i=2}^n(s-u_i)(1-su_i) (1-squ_i)\pf\left(\left(\overline{\Mgammarootqbracket_{[n]\setminus\{1\}}}|_{s=qs}\right)_{[n]\setminus\{1,2,k\}}\right),
\end{multline*}
under our current substitution $u_2=u_k^{-1}$. Invoking \eqref{eq: Commutation relation} and cancelling common factors, we see that this is equivalent to 
\begin{multline*}
    (1+q^{1/2})(1+s)\prod_{3\le i\le n\atop i\neq k}(1-su_i)\pf\left(\overline{\Mgammabracket_{[n]\setminus\{2,k\}}}|_{u_1=s}\right)\\=(1+\gamma q^{1/2})(1+\gamma^{-1}s)\prod_{3\le i\le n\atop i\neq k}(s-u_i)(1-su_i) (1-squ_i)\\\times \pf\left(\overline{\Mgammarootqbracket_{[n]\setminus\{1,2,k\}}}|_{s=qs}\right),
\end{multline*}
which holds by induction. 

\emph{Case $u_2=(qu_k)^{-1}$, $3 \le k \le n$:} Again all entries in the second column and in the second row of $\overline{\Mgamma}$ and $\overline{\Mgammarootqbracket_{[n]\setminus\{1\}}}|_{s=qs}$ vanish under this substitution, except for the entries indexed by $k$. It can be checked that the non-vanishing entry of the left-hand side differs from the one on the right-hand side only by the factor 
$$(s-u_k)(1-qsu_k)(q-su_k)^{-1}(1-su_k)^{-1}.$$ After expanding both sides along the second column and the second row using \eqref{eq: Laplace for Pfaffian}
and cancelling the common terms, we are left with showing that
\begin{multline*}
    (1+q^{1/2})(1+s)\frac{(s-u_k)(1-qsu_k)}{(q-su_k)(1-su_k)}\prod_{i=2}^n(1-su_i)\pf \left(\left(\overline{\Mgamma}|_{u_1=s}\right)_{[n]\setminus\{2,k\}}\right)\\=(1+\gamma q^{1/2})(1+\gamma^{-1}s)(1-s)\prod_{i=2}^n(s-u_i)(1-su_i) (1-squ_i)\\\times\pf\left(\left(\overline{\Mgammarootqbracket_{[n]\setminus\{1\}}}|_{s=qs}\right)_{[n]\setminus\{1,2,k\}}\right),
\end{multline*}
under our current substitution $u_2=(qu_k)^{-1}$. Invoking \eqref{eq: Commutation relation}
and after cancelling some factors, this is equivalent to
\begin{multline*}
    (1+q^{1/2})(1+s)\prod_{3\le i\le n\atop i\neq k}(1-su_i)\pf\left(\overline{\Mgammabracket_{[n]\setminus\{2,k\}}}|_{u_1=s}\right)\\=(1+\gamma q^{1/2})(1+\gamma^{-1}s)\prod_{3\le i\le n\atop i\neq k}(s-u_i)(1-su_i) (1-squ_i)\\\times \pf\left(\overline{\Mgammarootqbracket_{[n]\setminus\{1,2,k\}}}|_{s=qs}\right),
\end{multline*}
which holds by induction. 

Now we prove \eqref{eq: key Lemma 2}:
The proof is again by induction with respect to $n$. The case $n=0$ is obvious. 

We now argue that, for $n>0$, both sides of the identity 
are polynomials of degree at most $2n-1$ in $u_i$ for $i=1,\dots,n$. By symmetry, it suffices to show the assertion on the degree for $u_1$. Let $T\subseteq [n]$  and observe that $u_1$ appears only in the column and row of $\overline{\Mgammabracket_T}$ indexed by $1$. It can be checked that the entry of this matrix is a polynomial in $u_1$ of degree $2\vert T\vert -2$ if $1\in T$ and of degree zero otherwise. Now, the claim about the degree is obvious for the left-hand side. Keeping in mind that $M=\Mone$ and a case distinction on whether $1$ is an element of $T$ or not, as well as a simple count of factors in both cases, shows the assertion for the right-hand side.

Next, we identify $2n-1$ evaluation for $u_1$ on which both sides of \eqref{eq: key Lemma 2} agree. More concretely, these are given by
\begin{itemize}
    \item $u_1=u_k$ for $2 \le k \le  n$,
    \item $u_1=u_k^{-1}$ for $2 \le k \le  n$,
    \item $u_1=s$.
\end{itemize}
By the symmetry in $u_1,\ldots,u_n$ and the degree estimation, it then follows that the difference of the left-hand side and the right-hand side is of the form 
$$
C \cdot \prod_{i=1}^n (u_i-s) \prod_{1 \le i < j \le n} (u_i-u_j)(1-u_i u_j), 
$$
where $C \in \mathbb{Q}(\gamma,q^{1/2},s)$. We then also show that both sides agree after setting $(u_1,u_2)=(1,q^{-1})$, thus we can conclude that $C=0$. 

\emph{Case $u_1=u_k$, $2 \le k \le  n$:} The left-hand side of the identity vanishes because two rows and their respective columns of the antisymmetric matrix agree, hence the Pfaffian vanishes by \eqref{eq: Antisymmetry of Pfaffian}, and it remains to show that this is also the case for the right-hand side. 

A summand of the right-hand side vanishes if either $k, n \in T$ or $k, n \in T^c$ because of the antisymmetry of the Pfaffian and the product factors in each summand. Therefore, we can assume that $k \in T$ and $n \in T^c$, or 
$k \in T^c$ or $n \in T$. Assume $T$ is of the first type, then $T'=T \setminus \{k\} \cupdot \{n\}$ is of the second type. It can be checked that 
the summands of $T$ and $T'$ agree under this specialization up to sign using the antisymmetry of the Pfaffian stated in \eqref{eq: Antisymmetry of Pfaffian}. This establishes a sign-reversing involution of the summands under this specialization.

\emph{Case $u_1=u_k^{-1}$, $2 \le k \le n$:} All entries of $\overline{\Mgamma}$ of the first column and of the first row vanish under this substitution, except for the $k$-th entries. Expanding the Pfaffian on the left-hand side along the first row and column by \eqref{eq: Laplace for Pfaffian}
and using \eqref{eq: Commutation relation} with $T=[n]$ and $S=[n]\setminus\{1,k\}$, we obtain
\begin{multline}\label{eq: Simplified LHS eval u=u inverse}
    (-1)^{k}\prod_{2\le l\le n\atop l\neq k}\left(1-\frac{u_l}{u_k}\right)\left(1-q\frac{u_l}{u_k}\right)(1-u_ku_l)(1-qu_ku_l)\\\times\frac{\left(\frac{1}{u_k}-u_k\right)\left((1+q)(1- q^{1 /2}) +\left (\frac{1}{u_k} + u_k\right)(q^{1 /2}- q)\right) }{1+q^{1/2}}\\\times(1+q^{1/2})^2\left(1-\frac{s}{u_k}\right)(1-su_k)\prod_{2\le i\le n\atop i\neq k} (1+q^{1/2})(1-su_i)
    \pf \left(\overline{\Mgammabracket_{[n]\setminus\{1,k\}}}\right).
\end{multline}
Due to the factors $1-u_iu_j$ in the product over all $i\in T,j\in T^c$ and over all $i<j,~i,j\in T^c$, a summand on the right-hand side vanishes under this specialization unless $1,k\in T$. For the summands with $1,k\in T$, we observe that all entries of the column and row indexed by $1$ of $\overline{\Mgammabracket_T}$ vanish except for those indexed by $k$. Expansion of the Pfaffian in each summand along the row and column indexed by $1$ using \eqref{eq: Laplace for Pfaffian} thus yields
\begin{multline*}
    (-1)^{\#\{j\in T| j<k\}} \prod_{k+1\le l\le n\atop l\in T}\left(1-\frac{u_l}{u_k}\right)\left(1-q\frac{u_l}{u_k}\right)(1-u_ku_l)(1-qu_ku_l)\\\times
    \frac{\left(\frac{1}{u_k}-u_k\right)\left((1+q)(1- q^{1 /2}) +\left (\frac{1}{u_k} + u_k\right)(q^{1 /2}- q)\right) }{1+q^{1/2}}
    \prod_{ 2\le l\le k-1\atop l\in T}\left(1-\frac{u_l}{u_k}\right)\left(1-q\frac{u_l}{u_k}\right)\\
    \times\pf\left(\left(\overline{\Mgammabracket_T}|_{u_1=\frac{1}{u_k}}\right)_{T\setminus\{1,k\}}\right),
\end{multline*}
for a summand with $1,k\in T$. Using this and re-indexing the sum on the right-hand side by $S=T\setminus\{1,k\}\subseteq [n]\setminus\{1,k\}$, and after invoking \eqref{eq: Commutation relation} and regrouping terms, we obtain
\begin{multline*}
    (-1)^{k}\prod_{2\le l\le n\atop l\neq k}\left(1-\frac{u_l}{u_k}\right)\left(1-q\frac{u_l}{u_k}\right)(1-u_ku_l)(1-qu_ku_l)\\\times\frac{\left(\frac{1}{u_k}-u_k\right)\left((1+q)(1- q^{1 /2}) +\left (\frac{1}{u_k} + u_k\right)(q^{1 /2}- q)\right) }{1+q^{1/2}}\\\times(1+q^{1/2})^2\left(1-\frac{s}{u_k}\right)(1-su_k)\prod_{2\le i\le n\atop i\neq k} (1+q^{1/2})(1-su_i)\\\times\sum_{S\subseteq[n]\setminus\{1,k\}} \sgn_{[n] \setminus \{1,k \}}(S) (-\gamma^{-1} s;q^{1 /2})_{n-2-\vert S\vert }(-\gamma q^{1/2};q^{1/2})_{n-2-\vert S\vert }\\\times\prod_{i\in S,j\in S^c}(u_{i}-qu_{j})(1-u_iu_j) \prod_{i\in S^c}(1-u_i)\prod_{i\in S}(1+q^{1/2})(u_{i}-s)\\\times\prod_{i,j\in S^c\atop i<j}(1-u_iu_j)(u_i-u_j)\pf\left(\overline{\Mgammabracket_{S}}\right), 
\end{multline*}
where $S^c$ is the complement of $S$ in $[n]\setminus\{1,k\}$.

By induction, this is further equal to 
\begin{multline}
    (-1)^{k}\prod_{2\le l\le n\atop l\neq k}\left(1-\frac{u_l}{u_k}\right)\left(1-q\frac{u_l}{u_k}\right)(1-u_ku_l)(1-qu_ku_l)\\\times\frac{\left(\frac{1}{u_k}-u_k\right)\left((1+q)(1- q^{1 /2}) +\left (\frac{1}{u_k} + u_k\right)(q^{1 /2}- q)\right) }{1+q^{1/2}}\\\times(1+q^{1/2})^2\left(1-\frac{s}{u_k}\right)(1-su_k)\prod_{2\le i\le n\atop i\neq k} (1+q^{1/2})(1-su_i)
    \pf \left(\overline{\Mgammabracket_{[n]\setminus\{1,k\}}}\right)
\end{multline}
and this is the same as \eqref{eq: Simplified LHS eval u=u inverse}.

\emph{Case $u_1=s$:} The left-hand side in this case is given by \begin{equation*}
    (1+q^{1/2})(1-s^2)\prod_{i=2}^n (1+q^{1/2})(1-su_i)
    \pf\left(\overline{\Mgamma}|_{u_1=s}\right).
\end{equation*}
Due to the factors $u_i-s$ in the product over all $i\in T$ on the right-hand side of the identity, each summand with $1\in T$ vanishes. Re-indexing the sum by $S=T\setminus\{1\}\subseteq [n]\setminus\{1\}$ and denoting $S^c=([n]\setminus\{1\})\setminus S$, as well as splitting off the ``lowest order terms'' $1+\gamma q^{1/2}$ and $1+\gamma^{-1}s$ of the Pochhammer-symbols, we obtain for the right-hand side
\begin{multline*}
    (1+\gamma q^{1/2})(1+\gamma s)(1-s)\prod_{i=2}^n(s-u_i)(1-su_i)\sum_{S\subseteq[n]\setminus\{1\}} \sgn_{[n] \setminus \{1\}}(S) \\\times (-\gamma q;q^{1 / 2})_{n-1-\vert S\vert} (-\gamma^{-1}sq^{1/2};q^{1 / 2})_{n-1-\vert S\vert}\prod_{i\in S,j\in S^c}(u_{i}-qu_{j})(1-u_iu_j)\\\times\prod_{i\in S^c}(1-u_i)\prod_{i\in S}(1+q^{1/2})(u_{i}-qs)\prod_{i,j\in S^c\atop i<j}(1-u_iu_j)(u_i-u_j)\pf\left(\overline{M_S}\right).
\end{multline*}
Using the induction hypothesis for $n-1$, $\gamma=\gamma q^{1/2}$ and $s=sq$, we see that this is equal to 
\begin{multline*}
    (1+\gamma q^{1/2})(1+\gamma^{-1}s)(1-s)\prod_{i=2}^n(s-u_i)(1-su_i)\\\times\prod_{i=2}^n (1+q^{1/2})(1-squ_i) \pf\left(\overline{\Mgammarootqbracket_{[n]\setminus\{1\}}}|_{s=qs}\right).
\end{multline*}
Cancelling some common factors, we arrive at the identity \eqref{eq: Key Lemma main2 identity A} in Lemma~\ref{lem: Key lemma for main2}.


\emph{Case $u_1=1$, $u_2=q^{-1}$:} 
We start with the right-hand side. Because of the factor $\prod_{i\in T^c}(1-u_i)$, any summand with $1\in T^c$ vanishes. We re-index the sum by $S=T\setminus\{1\}$ and write the right-hand side as
\begin{multline*}
    (1+q^{1/2})(1-s)\sum_{S\subseteq[n]\setminus\{1\}} \sgn_{[n]\setminus\{1\}}(S)  (-\gamma^{-1} s;q^{1 /2})_{n-1-\vert S\vert }(-\gamma q^{1 /2};q^{1 /2})_{n-1-\vert S\vert }\\\times \prod_{j\in S^c}(1-u_j)^2(1-qu_j)\prod_{i\in S,j\in S^c}(u_{i}-qu_{j})(1-u_iu_j)\prod_{i\in S}(1+q^{1/2})(u_{i}-s)\\\times\prod_{i,j\in S^c\atop i<j}(1-u_iu_j)(u_i-u_j) \pf\left(\overline{M_{S\cup \{1\}}}|_{u_1=1}\right),
\end{multline*}
where we still set $u_2=q^{-1}$ in the full expression and denote  by $S^c$ the complement of $S$ in $[n]\setminus\{1\}$. Due to the factor $\prod_{j\in S^c}(1-qu_j)$, any summand with $2\in S^c$ vanishes under this substitution. It is easy to check that $\left(M_{S_\cup\{1\}}\right)_{0,1}|_{u_1=1}=1$ and $\left(M_{S_\cup\{1\}}\right)_{1,k}|_{u_1=1}=1$ for $k=2,\dots ,n$, therefore any entry in the row and column of $\overline{M_{S\cup \{1\}}}|_{u_1=1}$ indexed by $1$ contains the factor $\prod_{l=2,~l\in S\cup\{1\}}^n(1-qu_l)$ and thus vanishes if $2\in S$. As the Pfaffian then vanishes too, we see that all summands are equal to zero under this substitution, hence the right-hand side vanishes. 

For the left-hand side it is easy to check that $\left(\Mgamma\right)_{0,1}|_{u_1=1}=1$ and $\left(\Mgamma\right)_{1,k}|_{u_1=1}$ is a rational function in $u_k$ with denominator equal to $1-s u_k$. Therefore, any entry in the first row and first column of $\overline{\Mgamma}|_{u_1=1}$ contains the factor $\prod_{l=2}^n(1-qu_l)$ and thus vanishes under the substitution $u_2=q^{-1}$. Since a full row and its corresponding column vanish, the Pfaffian vanishes too, and so the left-hand side is equal to zero as well.
\end{proof}

\subsection{Reducing \eqref{to be eased by notation} to Lemma \ref{lem: Key lemma for main2}}\label{subsec: Proof of Corollary main2}
We now reduce \eqref{to be eased by notation} and thus Corollary \ref{main2:cor} to the $\gamma=1$ case of \eqref{eq: key Lemma 2} in Lemma \ref{lem: Key lemma for main2}.

\begin{proof}[Proof of Corollary \ref{main2:cor}]
    Using our notation introduced in \eqref{eq: Def of Mgamma}, \eqref{eq: Def of B(T)}, \eqref{eq: Notation for matrix restricted 1}, \eqref{eq: Def of Conj by B(T)}, we can rewrite \eqref{to be eased by notation} as
\begin{multline}\label{eq:B}
 \prod_{i=1}^n \frac{1+q^{1/2}}{1-u_i}
    \prod_{1 \le i < j \le n} \frac{1-q  u_i u_j}{u_i-u_j}
    \pf \left(M \right)\\= \sum_{l\ge 0}\sum_{T\subsetneq[n]}
    (-q^{1 /2};q^{1 /2})_{n-\vert T\vert} 
    (-s_l;q^{1 /2})_{n-\vert T\vert }\prod_{i\in T}\frac{(1+q^{1/2})(u_{i}-s_l)}{1-u_i} \\\times\prod_{i=1}^n\left(\frac{1}{1-s_lu_i} \prod_{j=0}^{l-1}\frac{u_i-s_j}{1-s_ju_i}\right) \prod_{i\in T,j\in T^c}\frac{u_{i}-qu_{j}}{u_{i}-u_{j}}
    \prod_{i,j\in T\atop i<j} \frac{1-q  u_i u_j}{u_i-u_j} \pf \left(M_T \right).
\end{multline}
Recall that we need to show this for any fixed $p \ge 0$, where we set $s_l=s_p$ for $l \ge p$. We use the same trick as in the proof of Theorem \ref{main1}. We split the sum over $l$ on the right-hand side into the part where $l < p$ and the remainder where $l \ge p$. The sum over $l \ge p$ can then be computed using the formula for the geometric series. After further multiplying both sides by $1-\prod_{i=1}^n \frac{u_i-s_p}{1- s_p u_i}$, we see that \eqref{eq:B} is equivalent to the following identity.
\begin{multline}\label{B''}
    \left( 1-\prod_{i=1}^n \frac{u_i-s_p}{1- s_p u_i} \right)\prod_{i=1}^n \frac{1+q^{1/2}}{1-u_i}
    \prod_{1 \le i < j \le n} \frac{1-q  u_i u_j}{u_i-u_j}
    \pf \left(M \right)\\= \sum_{T\subsetneq[n]}(-q^{1 /2};q^{1 /2})_{n-\vert T\vert }\prod_{i\in T}\frac{1+q^{1/2}}{1-u_i}\prod_{i\in T,j\in T^c}\frac{u_{i}-qu_{j}}{u_{i}-u_{j}}
    \prod_{i,j\in T\atop i<j} \frac{1-q  u_i u_j}{u_i-u_j}\\\times \pf \left(M_T \right) \left( 
    \sum_{l=0}^{p} (-s_l;q^{1/2})_{n-|T|} \prod_{i \in T} (u_i-s_l) 
    \prod_{i=1}^n \left( \frac{1}{1-s_l u_i} \prod_{j=0}^{l-1} \frac{u_i-s_j}{1-s_j u_i} \right) \right. \\
    \left. -\prod_{i=1}^n \frac{u_i-s_p}{1- s_p u_i} \sum_{l=0}^{p-1} (-s_l;q^{1/2})_{n-|T|} \prod_{i \in T} (u_i-s_l) 
    \prod_{i=1}^n \left( \frac{1}{1-s_l u_i} \prod_{j=0}^{l-1} \frac{u_i-s_j}{1-s_j u_i} \right) \right)
\end{multline}

Next we show that \eqref{B''} follows from
\begin{multline}\label{b}
    \prod_{i=1}^n \prod_{j=0}^{l-1} \frac{u_i-s_j}{1-s_j u_i}\left( 1-\prod_{i=1}^n \frac{u_i-s_l}{1- s_l u_i} \right)\prod_{i=1}^n \frac{1+q^{1/2}}{1-u_i}
    \prod_{1 \le i < j \le n} \frac{1-q  u_i u_j}{u_i-u_j}
    \pf \left(M \right)\\= \sum_{T\subsetneq[n]}(-q^{1 /2};q^{1 /2})_{n-\vert T\vert }\prod_{i\in T}\frac{1+q^{1/2}}{1-u_i}\prod_{i\in T,j\in T^c}\frac{u_{i}-qu_{j}}{u_{i}-u_{j}}
    \prod_{i,j\in T\atop i<j} \frac{1-q  u_i u_j}{u_i-u_j}\\\times \pf \left(M_T \right) (-s_l;q^{1/2})_{n-|T|} \prod_{i \in T} (u_i-s_l) 
    \prod_{i=1}^n \left( \frac{1}{1-s_l u_i} \prod_{j=0}^{l-1} \frac{u_i-s_j}{1-s_j u_i} \right),
\end{multline}
where  $l$ is a non-negative integer. Indeed, we obtain \eqref{B''} after summing \eqref{b} over $l \in \{0,\ldots,p\}$ and then subtracting the sum over $l \in \{0,\ldots,p-1\}$ multiplied by $\prod_{i=1}^n \frac{u_i-s_p}{1-s_p u_i}$. For the right-hand side of \eqref{B''}, this is immediate, while for the left-hand side, this follows from telescoping.

We rearrange some factors in \eqref{b} and move
$$
-\prod_{i=1}^n \prod_{j=0}^{l-1} \frac{u_i-s_j}{1-s_j u_i} \prod_{i=1}^n \frac{u_i-s_l}{1- s_l u_i} \prod_{i=1}^n \frac{1+q^{1/2}}{1-u_i}
    \prod_{1 \le i < j \le n} \frac{1-q  u_i u_j}{u_i-u_j}
    \pf \left(M \right)
$$
(which is just the negative of the missing summand for $T=[n]$ on the right-hand)
from the left-hand side to the right-hand side, yielding
\begin{multline}\label{eq: what we reuse later}
    \prod_{i=1}^n \left( \frac{1+q^{1/2}}{1-u_i}\prod_{j=0}^{l-1} \frac{u_i-s_j}{1-s_j u_i} \right)
    \prod_{1 \le i < j \le n} \frac{1-q  u_i u_j}{u_i-u_j}
    \pf \left(M \right)\\= \sum_{T\subseteq[n]}(-s_l;q^{1/2})_{n-|T|}(-q^{1 /2};q^{1 /2})_{n-\vert T\vert } \prod_{i=1}^n \left( \frac{1}{1-s_l u_i} \prod_{j=0}^{l-1} \frac{u_i-s_j}{1-s_j u_i} \right)\\\times\prod_{i\in T}\frac{(1+q^{1/2})(u_i-s_l)}{1-u_i}\prod_{i\in T,j\in T^c}\frac{u_{i}-qu_{j}}{u_{i}-u_{j}}
    \prod_{i,j\in T\atop i<j} \frac{1-q  u_i u_j}{u_i-u_j} \pf \left(M_T \right).
\end{multline}
We multiply the equation by $\prod_{1\le i<j\le n}(1-u_iu_j)$, then use \eqref{eq:det of B(T)} to form $\det(B([n],[n])$ and $\det(B(T,T))$ on the 
left-hand side and right-hand side, respectively, then invoke \eqref{eq:Pfaffian of conj by B(T)} and divide by $\prod_{i=1}^n \Big(\frac{1}{1-s_lu_i}\prod_{j=0}^{l-1}\frac{u_i-s_j}{1-s_ju_i}\Big)$ to obtain
\begin{multline}\label{eq: Second identification}
    \prod_{i=1}^n \frac{(1+q^{1/2})(1-s_lu_i)}{1-u_i}
    \prod_{1 \le i < j \le n} \frac{1}{u_i-u_j}
    \pf \left(\overline{M} \right)\\= \sum_{T\subseteq[n]}(-s_l;q^{1/2})_{n-|T|}(-q^{1 /2};q^{1 /2})_{n-\vert T\vert }\prod_{i\in T}\frac{(1+q^{1/2})(u_i-s_l)}{1-u_i}\\\times\prod_{i\in T,j\in T^c}\frac{(u_{i}-qu_{j})(1-u_iu_j)}{u_{i}-u_{j}} \prod_{i,j\in T^c}(1-u_iu_j)
    \prod_{i,j\in T\atop i<j} \frac{1}{u_i-u_j} \pf \left(\overline{M_T} \right).
\end{multline}
We then multiply both sides by $\prod_{i=1}^n(1-u_i)\prod_{1\le i<j\le n}(u_i-u_j)$ and replace $s_l$ by $s$. This results in the $\gamma=1$ case of \eqref{eq: key Lemma 2} in Lemma~\ref{lem: Key lemma for main2}. 
\end{proof}

\subsection{Reducing \eqref{eq:to show for main 2} to Lemma \ref{lem: Key lemma for main2}}\label{subsec: Proof of Thm main2}
Using Corollary \ref{main2:cor}, and versions in the proof thereof, we can now reduce \eqref{eq:to show for main 2} to \eqref{eq: key Lemma 2} in Lemma \ref{lem: Key lemma for main2} and thus finish the proof of Theorem \ref{main2}.
\begin{proof}[Proof of Theorem \ref{main2}]
    Rewriting \eqref{eq:to show for main 2} using our notation introduced in \eqref{eq: Def of Mgamma}, where for this proof we substitute $s$ to $s_0$, \eqref{eq: Def of B(T)}, \eqref{eq: Notation for matrix restricted 1}, \eqref{eq: Def of Conj by B(T)}, we obtain
    \begin{multline*}
        \prod_{i=1}^n \frac{1+q^{1/2}}{1-u_i}
    \prod_{1 \le i < j \le n} \frac{1-q  u_i u_j}{u_i-u_j}
    \pf\left(\Mgamma\right)  = \sum_{l \ge 0} \sum_{T \subsetneq [n]} 
    \left([l=0](-\gamma q^{1/ 2};q^{1 / 2})_{n-|T|}\right. \\\times \left.  (-\gamma^{-1}s_0;q^{1 / 2})_{n-|T|}+[l>0](- q^{1/ 2};q^{1 / 2})_{n-|T|} (-s_l;q^{1 / 2})_{n-|T|}\right)  \prod_{i\in T} \frac{(1+q^{1/2})(u_i-s_l)}{1-u_i} \\\times \prod_{i=1}^n \left( \frac{1}{1-s_l u_i} \prod_{j=0}^{l-1} \frac{u_i-s_j}{1-s_j u_i} \right)
    \prod_{i \in T, j \in T^c} \frac{u_i-q u_j}{u_i-u_j}
        \prod_{i,j\in T\atop i<j} \frac{1-q  u_i u_j}{u_i-u_j}
        \pf \left(M_T \right).
    \end{multline*} 
    By adding and subtracting again, we rearrange the $l=0$ term on the right-hand side as follows
    \begin{multline}\label{eq: where they cancel}
    \sum_{l \ge 0} \sum_{T \subsetneq [n]} 
    (- q^{1/ 2};q^{1 / 2})_{n-|T|} (-s_l;q^{1 / 2})_{n-|T|} \prod_{i\in T} \frac{(1+q^{1/2})(u_i-s_l)}{1-u_i} \\\times \prod_{i=1}^n \left( \frac{1}{1-s_l u_i} \prod_{j=0}^{l-1} \frac{u_i-s_j}{1-s_j u_i} \right)
    \prod_{i \in T, j \in T^c} \frac{u_i-q u_j}{u_i-u_j}
        \prod_{i,j\in T\atop i<j} \frac{1-q  u_i u_j}{u_i-u_j}
        \pf \left(M_T \right)\\+\sum_{T \subsetneq [n]} 
    \Big((-\gamma q^{1/ 2};q^{1 / 2})_{n-|T|} (-\gamma^{-1}s_0;q^{1 / 2})_{n-|T|}-(- q^{1/ 2};q^{1 / 2})_{n-|T|} (-s_0;q^{1 / 2})_{n-|T|}\Big) \\\times  \prod_{i=1}^n  \frac{1}{1-s_0 u_i}\prod_{i\in T} \frac{(1+q^{1/2})(u_i-s_0)}{1-u_i} 
    \prod_{i \in T, j \in T^c} \frac{u_i-q u_j}{u_i-u_j}
        \prod_{i,j\in T\atop i<j} \frac{1-q  u_i u_j}{u_i-u_j}
        \pf \left(M_T \right).
    \end{multline}
 We observe that $$(-\gamma q^{1/ 2};q^{1 / 2})_{n-|T|} (-\gamma^{-1}s_0;q^{1 / 2})_{n-|T|}-(- q^{1/ 2};q^{1 / 2})_{n-|T|} (-s_0;q^{1 / 2})_{n-|T|}=0$$ if $T=[n]$, and so we can extend the sum over $T\subsetneq [n]$ in the second summand to a sum over all subsets of $[n]$ and the following partial sum of the second summand on the right-hand side
    \begin{multline*}
        -\sum_{T \subseteq [n]} 
    (- q^{1/ 2};q^{1 / 2})_{n-|T|} (-s_0;q^{1 / 2})_{n-|T|}  \prod_{i=1}^n  \frac{1}{1-s_0 u_i}\prod_{i\in T} \frac{(1+q^{1/2})(u_i-s_0)}{1-u_i} \\\times
    \prod_{i \in T, j \in T^c} \frac{u_i-q u_j}{u_i-u_j}
        \prod_{i,j\in T\atop i<j} \frac{1-q  u_i u_j}{u_i-u_j}
        \pf \left(M_T \right)
    \end{multline*}
    is now, up to the sign, the same as the right-hand side of \eqref{eq: what we reuse later} in the case $l=0$. The sum over $l \ge 0$ in \eqref{eq: where they cancel} is easily identified as the right-hand side of \eqref{eq:B}. Since the right-hand side of \eqref{eq:B} and the right-hand side of \eqref{eq: what we reuse later}, in the case $l=0$, evaluate to the same left-hand side and one of them appears with a negative sign in the above formula \eqref{eq: where they cancel}, they cancel, and we are left with showing that
    \begin{multline}
        \prod_{i=1}^n \frac{1+q^{1/2}}{1-u_i}
    \prod_{1 \le i < j \le n} \frac{1-q  u_i u_j}{u_i-u_j}
    \pf\left(\Mgamma\right) =\sum_{T \subsetneq [n]} 
    (-\gamma q^{1/ 2};q^{1 / 2})_{n-|T|} (-\gamma^{-1}s_0;q^{1 / 2})_{n-|T|} \\\times  \prod_{i=1}^n  \frac{1}{1-s_0 u_i}\prod_{i\in T} \frac{(1+q^{1/2})(u_i-s_0)}{1-u_i} 
    \prod_{i \in T, j \in T^c} \frac{u_i-q u_j}{u_i-u_j}
        \prod_{i,j\in T\atop i<j} \frac{1-q  u_i u_j}{u_i-u_j}
        \pf \left(M_T \right).
    \end{multline}
    Multiplying by $\prod_{i=1}^n(1-u_i)\prod_{1\le i<j\le n}(u_i-u_j)$ and replacing $s_0$ by $s$ results in the identity \eqref{eq: key Lemma 2} stated in Lemma~\ref{lem: Key lemma for main2}.
\end{proof}

\section{Concluding remarks} 

In this paper, we establish two Littlewood identities that generalize known Littlewood identities for generating functions associated with alternating sign matrices. These identities play a central role in computations linking various classes of objects that are equinumerous with alternating sign matrices. Consequently, combinatorial proofs of such identities advance our understanding of how to connect these classes in a purely combinatorial manner. We are currently investigating whether a vertex-model proof of Theorem~\ref{main2}, which is based on the Yang–Baxter equation, we have worked out can be developed into a probabilistic bijection of that Littlewood identity.

\bibliographystyle{abbrvurl}
\bibliography{ASMLittlewood.bib}

@book {EC2,
    AUTHOR = {Stanley, Richard P.},
     TITLE = {Enumerative combinatorics. {V}ol. 2},
    SERIES = {Cambridge Studies in Advanced Mathematics},
    VOLUME = {62},
      NOTE = {With a foreword by Gian-Carlo Rota and appendix 1 by Sergey
              Fomin},
 PUBLISHER = {Cambridge University Press, Cambridge},
      YEAR = {1999},
     PAGES = {xii+581},
      ISBN = {0-521-56069-1; 0-521-78987-7},
   MRCLASS = {05A15 (05-02 05E05 05E10 68R05)},
  MRNUMBER = {1676282},
MRREVIEWER = {Ira\ Gessel},
       DOI = {10.1017/CBO9780511609589},
       URL = {https://doi.org/10.1017/CBO9780511609589},
}

@ARTICLE{Fis19a,
AUTHOR = {Fischer, I.},
TITLE = {Enumeration of alternating sign triangles using a constant term approach},
JOURNAL = {Trans. Amer. Math. Soc.},
VOLUME = {372},
YEAR = {2019},
PAGES = {1485--1508},
}

@book {Macdonald,
    AUTHOR = {Macdonald, I. G.},
     TITLE = {Symmetric functions and orthogonal polynomials},
    SERIES = {University Lecture Series},
    VOLUME = {12},
      NOTE = {Dean Jacqueline B. Lewis Memorial Lectures presented at
              Rutgers University, New Brunswick, NJ},
 PUBLISHER = {American Mathematical Society, Providence, RI},
      YEAR = {1998},
     PAGES = {xvi+53},
      ISBN = {0-8218-0770-6},
   MRCLASS = {05E05 (05E35 33D80)},
  MRNUMBER = {1488699},
MRREVIEWER = {Ang\`ele\ M.\ Hamel},
       DOI = {10.1090/ulect/012},
       URL = {https://doi.org/10.1090/ulect/012},
}

@ARTICLE{Fis19b,
AUTHOR = {Fischer, I.},
TITLE = {A constant term approach to enumerating alternating sign trapezoids},
JOURNAL = {Adv. Math.},
VOLUME = {356},
YEAR = {2019},
PAGES = {},
}

@article {AlternatingSignPentagonsAndMagogPentagons,
    AUTHOR = {Gangl, Moritz},
     TITLE = {Alternating sign pentagons and {M}agog pentagons},
   JOURNAL = {Adv. Math.},
  FJOURNAL = {Advances in Mathematics},
    VOLUME = {474},
      YEAR = {2025},
     PAGES = {Paper No. 110315, 29},
      ISSN = {0001-8708,1090-2082},
   MRCLASS = {05B20 (05A15)},
  MRNUMBER = {4903258},
MRREVIEWER = {Miodrag\ \v{Z}ivkovi\'{c}},
       DOI = {10.1016/j.aim.2025.110315},
       URL = {https://doi.org/10.1016/j.aim.2025.110315},
}

@article {fourfold,
    AUTHOR = {H{\"o}ngesberg, H.},
     TITLE = {A fourfold refined enumeration of alternating sign trapezoids},
   JOURNAL = {Electron. J. Combin.},
  FJOURNAL = {Electronic Journal of Combinatorics},
    VOLUME = {29},
      YEAR = {2022},
    NUMBER = {3},
     PAGES = {Paper No. 3.42, 27},
   MRCLASS = {05A15 (05A05 15B35)},
  MRNUMBER = {4477841},
       DOI = {10.37236/9933},
}

@misc{bounded,
AUTHOR = {Fischer, I.},
      EPRINT = {2301.00175},
  EPRINTTYPE = {arxiv},
TITLE = {Bounded Littlewood identity related to alternating sign matrices},
YEAR = {to appear in \it Forum of Mathematics, Sigma}
}

@BOOK{Bre99,
AUTHOR = {Bressoud, D. M.},
TITLE = {Proofs and confirmations. {T}he story of the alternating sign matrix conjecture},
SERIES = {MAA Spectrum},
PUBLISHER = {Mathematical Association of America and Cambridge University Press},
ADDRESS = {Washington, DC and Cambridge},
YEAR = {1999},
PAGES = {xvi+274},
MRNUMBER = {1718370 (2000i:15002)},
}

@article {nASMDPP,
    AUTHOR = {Fischer, I. and Schreier-Aigner, F.},
     TITLE = {The relation between alternating sign matrices and descending
              plane partitions: {$n + 3$} pairs of equivalent statistics},
   JOURNAL = {Adv. Math.},
  FJOURNAL = {Advances in Mathematics},
    VOLUME = {413},
      YEAR = {2023},
     PAGES = {Paper No. 108831, 47},
   MRCLASS = {05A15 (05A05 05A19 15B35 82B20 82B23)},
  MRNUMBER = {4530620},
MRREVIEWER = {David\ M.\ Bressoud},
       DOI = {10.1016/j.aim.2022.108831},
}

@incollection {kirillov,
    AUTHOR = {Kirillov, A. N. and Noumi, M.},
     TITLE = {{$q$}-difference raising operators for {M}acdonald polynomials
              and the integrality of transition coefficients},
 BOOKTITLE = {Algebraic methods and {$q$}-special functions ({M}ontr\'eal,
              {QC}, 1996)},
    SERIES = {CRM Proc. Lecture Notes},
    VOLUME = {22},
     PAGES = {227--243},
 PUBLISHER = {Amer. Math. Soc., Providence, RI},
      YEAR = {1999},
   MRCLASS = {39A13 (05E05 33D52)},
  MRNUMBER = {1726838},
MRREVIEWER = {Laurent\ Habsieger},
       DOI = {10.1090/crmp/022/13},
}

@article {warnaar,
    AUTHOR = {Warnaar, S. O.},
     TITLE = {Bisymmetric functions, {M}acdonald polynomials and
              {$\mathfrak{sl}_3$} basic hypergeometric series},
   JOURNAL = {Compos. Math.},
  FJOURNAL = {Compositio Mathematica},
    VOLUME = {144},
      YEAR = {2008},
    NUMBER = {2},
     PAGES = {271--303},
   MRCLASS = {33D67 (05A30 05E05)},
  MRNUMBER = {2406113},
MRREVIEWER = {Laurent\ Habsieger},
       DOI = {10.1112/S0010437X07003211},
}

@article {Petrov,
    AUTHOR = {Petrov, Leonid},
     TITLE = {Refined {C}auchy identity for spin {H}all-{L}ittlewood
              symmetric rational functions},
   JOURNAL = {J. Combin. Theory Ser. A},
  FJOURNAL = {Journal of Combinatorial Theory. Series A},
    VOLUME = {184},
      YEAR = {2021},
     PAGES = {Paper No. 105519, 50},
   MRCLASS = {05E05 (05E16 16T25 22E45)},
  MRNUMBER = {4297032},
MRREVIEWER = {Elizabeth\ M.\ Niese},
       DOI = {10.1016/j.jcta.2021.105519},
}

@article {Gavrilova,
    AUTHOR = {Gavrilova, Svetlana},
     TITLE = {Refined {L}ittlewood identity for spin {H}all-{L}ittlewood
              symmetric rational functions},
   JOURNAL = {Algebr. Comb.},
  FJOURNAL = {Algebraic Combinatorics},
    VOLUME = {6},
      YEAR = {2023},
    NUMBER = {1},
     PAGES = {37--51},
      ISSN = {2589-5486},
   MRCLASS = {05E05},
  MRNUMBER = {4552707},
MRREVIEWER = {Shintarou\ Yanagida},
       DOI = {10.5802/alco.251},
       URL = {https://doi.org/10.5802/alco.251},
}

@article {Borodin,
    AUTHOR = {Borodin, Alexei},
     TITLE = {On a family of symmetric rational functions},
   JOURNAL = {Adv. Math.},
  FJOURNAL = {Advances in Mathematics},
    VOLUME = {306},
      YEAR = {2017},
     PAGES = {973--1018},
      ISSN = {0001-8708},
   MRCLASS = {82C22 (05E05 33C52 81P45)},
  MRNUMBER = {3581324},
       DOI = {10.1016/j.aim.2016.10.040},
       URL = {https://doi.org/10.1016/j.aim.2016.10.040},
}

@article {BorodinPetrov,
    AUTHOR = {Borodin, Alexei and Petrov, Leonid},
     TITLE = {Higher spin six vertex model and symmetric rational functions},
   JOURNAL = {Selecta Math. (N.S.)},
  FJOURNAL = {Selecta Mathematica. New Series},
    VOLUME = {24},
      YEAR = {2018},
    NUMBER = {2},
     PAGES = {751--874},
      ISSN = {1022-1824},
   MRCLASS = {60K35 (05E05 82B23)},
  MRNUMBER = {3782413},
       DOI = {10.1007/s00029-016-0301-7},
       URL = {https://doi.org/10.1007/s00029-016-0301-7},
}

@article {KawankasIdentity,
    AUTHOR = {Kawanaka, Noriaki},
     TITLE = {On subfield symmetric spaces over a finite field},
   JOURNAL = {Osaka J. Math.},
  FJOURNAL = {Osaka Journal of Mathematics},
    VOLUME = {28},
      YEAR = {1991},
    NUMBER = {4},
     PAGES = {759--791},
      ISSN = {0030-6126},
   MRCLASS = {20G05 (20G40)},
  MRNUMBER = {1152953},
MRREVIEWER = {Bhama Srinivasan},
       URL = {http://projecteuclid.org/euclid.ojm/1200783420},
}

@book {LittlewoodIdentityOriginal,
    AUTHOR = {Littlewood, Dudley E.},
     TITLE = {The theory of group characters and matrix representations of
              groups},
      NOTE = {Reprint of the second (1950) edition},
 PUBLISHER = {AMS Chelsea Publishing, Providence, RI},
      YEAR = {2006},
     PAGES = {viii+314},
      ISBN = {0-8218-4067-3},
   MRCLASS = {20C15 (20G05)},
  MRNUMBER = {2213154},
       DOI = {10.1090/chel/357},
       URL = {https://doi.org/10.1090/chel/357},
}

@article {Eigenfunctions1,
    AUTHOR = {Povolotsky, A. M.},
     TITLE = {On the integrability of zero-range chipping models with
              factorized steady states},
   JOURNAL = {J. Phys. A},
  FJOURNAL = {Journal of Physics. A. Mathematical and Theoretical},
    VOLUME = {46},
      YEAR = {2013},
    NUMBER = {46},
     PAGES = {465205, 25},
      ISSN = {1751-8113,1751-8121},
   MRCLASS = {81R12 (82C22)},
  MRNUMBER = {3126878},
       DOI = {10.1088/1751-8113/46/46/465205},
       URL = {https://doi.org/10.1088/1751-8113/46/46/465205},
}

@article {Eigenfunctions2,
    AUTHOR = {Borodin, Alexei and Corwin, Ivan and Petrov, Leonid and
              Sasamoto, Tomohiro},
     TITLE = {Spectral theory for interacting particle systems solvable by
              coordinate {B}ethe ansatz},
   JOURNAL = {Comm. Math. Phys.},
  FJOURNAL = {Communications in Mathematical Physics},
    VOLUME = {339},
      YEAR = {2015},
    NUMBER = {3},
     PAGES = {1167--1245},
      ISSN = {0010-3616,1432-0916},
   MRCLASS = {82B23 (81R12)},
  MRNUMBER = {3385995},
       DOI = {10.1007/s00220-015-2424-7},
       URL = {https://doi.org/10.1007/s00220-015-2424-7},
}

@article {Eigenfunctions3,
    AUTHOR = {Corwin, Ivan and Petrov, Leonid},
     TITLE = {Stochastic higher spin vertex models on the line},
   JOURNAL = {Comm. Math. Phys.},
  FJOURNAL = {Communications in Mathematical Physics},
    VOLUME = {343},
      YEAR = {2016},
    NUMBER = {2},
     PAGES = {651--700},
      ISSN = {0010-3616,1432-0916},
   MRCLASS = {82B23 (60K35 82C23)},
  MRNUMBER = {3477349},
MRREVIEWER = {Aernout\ C. D. van Enter},
       DOI = {10.1007/s00220-015-2479-5},
       URL = {https://doi.org/10.1007/s00220-015-2479-5},
}

@article {LWIdentityHistoryFullReferences,
    AUTHOR = {Rains, Eric and Warnaar, S. Ole},
     TITLE = {Bounded {L}ittlewood identities},
   JOURNAL = {Mem. Amer. Math. Soc.},
  FJOURNAL = {Memoirs of the American Mathematical Society},
    VOLUME = {270},
      YEAR = {2021},
    NUMBER = {1317},
     PAGES = {vii+115},
      ISSN = {0065-9266,1947-6221},
      ISBN = {978-1-4704-4690-1; 978-1-4704-6522-3},
   MRCLASS = {05-02 (05A30 05E05 11P84 17B67 33D67)},
  MRNUMBER = {4259867},
       DOI = {10.1090/memo/1317},
       URL = {https://doi.org/10.1090/memo/1317},
}

@article {LWCombAspects3,
    AUTHOR = {Betea, D. and Wheeler, M.},
     TITLE = {Refined {C}auchy and {L}ittlewood identities, plane partitions
              and symmetry classes of alternating sign matrices},
   JOURNAL = {J. Combin. Theory Ser. A},
  FJOURNAL = {Journal of Combinatorial Theory. Series A},
    VOLUME = {137},
      YEAR = {2016},
     PAGES = {126--165},
      ISSN = {0097-3165,1096-0899},
   MRCLASS = {05E05},
  MRNUMBER = {3403518},
MRREVIEWER = {Meesue\ Yoo},
       DOI = {10.1016/j.jcta.2015.08.007},
       URL = {https://doi.org/10.1016/j.jcta.2015.08.007},
}

@article {LWCombAspects4,
    AUTHOR = {Betea, D. and Wheeler, M. and Zinn-Justin, P.},
     TITLE = {Refined {C}auchy/{L}ittlewood identities and six-vertex model
              partition functions: {II}. {P}roofs and new conjectures},
   JOURNAL = {J. Algebraic Combin.},
  FJOURNAL = {Journal of Algebraic Combinatorics. An International Journal},
    VOLUME = {42},
      YEAR = {2015},
    NUMBER = {2},
     PAGES = {555--603},
      ISSN = {0925-9899,1572-9192},
   MRCLASS = {05E05},
  MRNUMBER = {3369568},
MRREVIEWER = {Trueman\ MacHenry},
       DOI = {10.1007/s10801-015-0592-3},
       URL = {https://doi.org/10.1007/s10801-015-0592-3},
}

@article {LWCombAspects13,
    AUTHOR = {Wheeler, Michael and Zinn-Justin, Paul},
     TITLE = {Refined {C}auchy/{L}ittlewood identities and six-vertex model
              partition functions: {III}. {D}eformed bosons},
   JOURNAL = {Adv. Math.},
  FJOURNAL = {Advances in Mathematics},
    VOLUME = {299},
      YEAR = {2016},
     PAGES = {543--600},
      ISSN = {0001-8708,1090-2082},
   MRCLASS = {05E05},
  MRNUMBER = {3519476},
MRREVIEWER = {Michael\ Orin\ Joyce},
       DOI = {10.1016/j.aim.2016.05.010},
       URL = {https://doi.org/10.1016/j.aim.2016.05.010},
}

@book {schur1,
    AUTHOR = {Schur, Issai},
     TITLE = {Gesammelte {A}bhandlungen. {B}and {III}},
      NOTE = {Herausgegeben von Alfred Brauer und Hans Rohrbach},
 PUBLISHER = {Springer-Verlag, Berlin-New York},
      YEAR = {1973},
     PAGES = {iv+480},
   MRCLASS = {01A75},
  MRNUMBER = {462893},
}

@article {schur2,
    AUTHOR = {Schur, Issai},
     TITLE = {Aufgabe 569},
   JOURNAL = {Arch. Math. Phys.},
    VOLUME = {27(3)},
      YEAR = {1998},
}

@misc{FHRobbins,
AUTHOR = {Fischer, I. and H{\"o}ngesberg, H.},
      EPRINT = {2505.09275},
  EPRINTTYPE = {arxiv},
TITLE = {A {L}ittlewood-type identity for {R}obbins polynomials},
YEAR = {to appear in Algebraic Combinatorics}
}

\end{document}